\tikzstyle{nodal}=[circle,draw,fill=black,inner sep=0pt, minimum width=4pt]
\tikzstyle{half-fiber}=[rectangle,draw=black,thick,inner sep=0pt, minimum width=5pt, minimum height=5pt]
\tikzset{double distance = 2pt}
\newcommand{\vect}[1]{\boldsymbol{#1}}
\DeclareMathOperator{\Hom}{Hom}
\DeclareMathOperator{\GL}{GL}
\DeclareMathOperator{\Ker}{Ker}
\DeclareMathOperator{\rk}{rk}
\DeclareMathOperator{\Aut}{Aut}
\DeclareMathOperator{\id}{id}
\DeclareMathOperator{\disc}{disc}
\DeclareMathOperator{\Pic}{Pic}
\DeclareMathOperator{\MW}{MW}
\DeclareMathOperator{\Or}{O}
\title{Borcherds lattices and K3 surfaces of zero entropy}
\author{Simon Brandhorst}
\address{Simon Brandhorst:
Fakult\"at f\"ur Mathematik und Informatik, Universit\"at des Saarlandes, Campus E2.4, 66123 Saarbr\"ucken, Germany}
\email{brandhorst@math.uni-sb.de}
\author{Giacomo Mezzedimi}
\address{Giacomo Mezzedimi:
Mathematisches Institut, Universit\"at Bonn, Endenicher Allee 60, 53115 Bonn, Germany.}
\email{mezzedim@math.uni-bonn.de}
\thanks{G.M. is funded by the Hausdorff Center for Mathematics, Bonn (Germany’s Excellence Strategy – EXC-2047/1–390685813).
S.B. is funded by the Deutsche Forschungsgemeinschaft (DFG, German Research Foundation) – Project-ID 286237555 – TRR 195.
Gefördert durch die Deutsche Forschungsgemeinschaft (DFG) – Projektnummer 286237555 – TRR 195.}
\date{\today}
\begin{document}

\newcommand{\QQ}{\mathbb{Q}}
\newcommand{\RR}{\mathbb{R}}
\newcommand{\ZZ}{\mathbb{Z}}
\newcommand{\CC}{\mathbb{C}}
\newcommand{\NN}{\mathbb{N}}
\newcommand{\PP}{\mathbb{P}}
\newcommand{\Stabe}{\Aut(\mathcal{D}_L,\vect{e})}
\newcommand{\Stabee}{\Aut(\mathcal{D}_L,\vect{e'})}

\newtheorem{theorem}{Theorem}[section]
\newtheorem{proposition}[theorem]{Proposition}
\newtheorem{lemma}[theorem]{Lemma}
\newtheorem{corollary}[theorem]{Corollary}
\newtheorem{claim}[theorem]{Claim}
\theoremstyle{definition}
\newtheorem*{acknowledgments}{\textbf{Acknowledgments}}
\newtheorem*{conv}{\textbf{Conventions}}
\newtheorem{definition}[theorem]{Definition}
\newtheorem{algorithm}[theorem]{Algorithm}
\theoremstyle{remark}
\newtheorem{remark}[theorem]{Remark}
\newtheorem{question}[theorem]{Question}
\newtheorem{example}[theorem]{Example}

\maketitle

\begin{abstract}
    Let $L$ be an even, hyperbolic lattice with infinitely many simple $(-2)$-roots. We call $L$ a Borcherds lattice if 
    it admits an isotropic vector with bounded inner product with all the simple $(-2)$-roots. 
    We show that this is the case if and only if $L$ has zero entropy, or equivalently if and only if all symmetries of $L$ preserve some isotropic vector. 
    
    We obtain a complete classification of Borcherds lattices, consisting of $194$ lattices. In turn this provides a classification of hyperbolic lattices of rank $\ge 5$ with virtually solvable symmetry group. Finally, we apply these general results to the case of K3 surfaces. We obtain a classification of Picard lattices of K3 surfaces of zero entropy and infinite automorphism group, consisting of $193$ lattices. In particular we show that all Kummer surfaces, all supersingular K3 surfaces and all K3 surfaces covering an Enriques surface (with one exception) admit an automorphism of positive entropy.
\end{abstract}

\section{Introduction}
\subsection{Borcherds lattices}
As observed by Conway, the Leech lattice $\Lambda$ has a striking property. Namely, the hyperbolic lattice $\mathrm{II}_{1,25}=U\oplus \Lambda$ admits an isotropic vector whose inner product with all the simple roots of $\mathrm{II}_{1,25}$ is bounded (more precisely, it is always $1$). Later Borcherds \cite{borcherds.leech} wondered which other hyperbolic lattices $L$ share this property with $\mathrm{II}_{1,25}$. He conjectured that $\mathrm{II}_{1,25}$ should be the lattice of maximal rank satisfying this property, and he asked for a classification.

Given our primary interest towards geometric applications, we concentrate on the $(-2)$-roots.
\begin{definition} \label{defn:intro}
 A \emph{Borcherds lattice} is an even hyperbolic lattice with infinitely many simple $(-2)$-roots which admits an isotropic vector with bounded inner product with all the simple $(-2)$-roots. 
\end{definition}
Notice that, since the set of $(-2)$-roots is a subset of the set of all roots, Borcherds lattices also satisfy Borcherds' original condition. Moreover the assumption that a Borcherds lattice should contain infinitely many simple $(-2)$-roots is not really restrictive: indeed if there are no $(-2)$-roots, the condition in Definition \ref{defn:intro} becomes vacuous, and we already have a classification of hyperbolic lattices with only finitely many simple $(-2)$-roots thanks to the work of Nikulin \cite{nikulin.finite.aut.3,nikulin.finite.aut.greater.5} and Vinberg \cite{vinberg.finite.aut.4}.

The main result of this paper is a classification of Borcherds lattices, as stated in the following theorem:

\begin{theorem} \label{thm:main.intro}
There are $194$ Borcherds lattices up to isometry. The maximum rank of a Borcherds lattice is $26$, achieved by the lattice $\mathrm{II}_{1,25}=U\oplus \Lambda$.
\end{theorem}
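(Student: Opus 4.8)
The plan is to reduce the statement to a finite enumeration carried out by explicit computation. By Definition~\ref{defn:intro}, a Borcherds lattice $L$ of rank $n$ comes with a primitive isotropic vector $\vect{e}$ such that the heights $\langle\vect{e},\vect{r}\rangle$ are bounded, say by $N$, as $\vect{r}$ ranges over the simple $(-2)$-roots; these simple roots are exactly the walls of the fundamental Weyl chamber $\mathcal{D}$ of $W(L)$, and the walls of $\mathcal{D}$ are locally finite inside hyperbolic space, so the bounded-height condition forces $\vect{e}$ to lie in the closure of $\mathcal{D}$ on the isotropic boundary, with $\langle\vect{e},\vect{r}\rangle\ge 1$ for all but finitely many $\vect{r}$. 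The combinatorics of $\mathcal{D}$ near the cusp $\vect{e}$ is then governed by the negative definite lattice $C:=\vect{e}^{\perp}/\vect{e}$ of rank $n-2$, together with the finite gluing data reconstructing $L$ from $C$ and $\vect{e}$: the walls of $\mathcal{D}$ through $\vect{e}$ come from the roots of $C$, while the remaining (truncating) walls sit at heights $1,\dots,N$.

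The first and main step is to show that the bounded-height condition restricts $C$ to an explicit finite list of lattices, with $\rk C\le 24$. Running Vinberg's algorithm with controlling vector $\vect{e}$, one produces the simple roots in order of increasing height; the walls through $\vect{e}$ cut the horosphere at $\vect{e}$, a copy of $\RR^{n-2}$, into chambers for the affine reflection group generated by the roots of $C$, and the Borcherds condition says that already the finitely many walls of height $\le N$ cut one such chamber into a \emph{compact} polytope. This amounts to an explicit upper bound on a covering-radius-type invariant of $C$, satisfied by only finitely many negative definite lattices; in rank $24$ these turn out to be precisely the Niemeier lattices, the Leech lattice $\Lambda$ being the extremal one -- its covering radius is exactly $\sqrt{2}$, which is why $\rk C\le 24$, i.e.\ $\rk L\le 26$, is sharp. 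I expect this to be the principal obstacle: it is the conceptual heart of the classification and, in effective form, rests on a computer search.

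Given the finite list of admissible cusp lattices $C$, one finishes by a bounded enumeration: for each $C$ the discriminant form of $L$ is tightly controlled by that of $C$, so one lists all even hyperbolic lattices $L$ carrying an isotropic vector $\vect{e}$ with $\vect{e}^{\perp}/\vect{e}\cong C$, discards those whose Weyl chamber fails to have infinitely many simple $(-2)$-roots at bounded height, identifies the lattices arising from several pairs $(C,\vect{e})$, and reads off the $194$ isometry classes. For the rank statement, $\rk L=26$ forces $C$ to be a Niemeier lattice, hence unimodular; then $\vect{e}^{\perp}=\ZZ\vect{e}\oplus C$ with $C$ unimodular, and a short computation gives $L\cong U\oplus C$, an even unimodular hyperbolic lattice of signature $(1,25)$, hence isometric to $\mathrm{II}_{1,25}=U\oplus\Lambda$ regardless of which Niemeier lattice $C$ is. Conversely $\mathrm{II}_{1,25}$ is a Borcherds lattice by Conway's description of its simple roots (the Leech roots, all of height $1$), so the maximal rank $26$ is attained, and only by $\mathrm{II}_{1,25}$.
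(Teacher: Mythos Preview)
Your high-level plan---pass to the cusp lattice $C=\vect{e}^\perp/\langle\vect{e}\rangle$, argue finitely many $C$ occur, then reconstruct $L$---matches the paper, but the mechanism you propose for bounding $C$ does not work and is not what the paper does. A covering-radius bound is only a \emph{sufficient} condition: the paper's Proposition~\ref{prop:covering.radius} shows covering radius $\le\sqrt{2}$ implies Leech type, but Table~\ref{tab:root.overlattices.Leech.type} exhibits Leech type lattices with covering radius squared up to $17/6$, so bounded height does not translate into the covering-radius inequality you want, and your extremality argument for $\rk C\le 24$ via the Leech lattice's covering radius is not valid. The actual finiteness comes from a different source. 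Using the equivalence (Theorem~\ref{thm:equivalence.characterizations.Borcherds}) of ``Borcherds'' with ``$\mathcal{D}_L$ has a unique cusp of infinite stabilizer'', one proves (Proposition~\ref{prop:genus.Leech.type.lattice}) that the \emph{genus} of $C$ contains exactly one lattice that is not a root overlattice. This dichotomy---either $C$ lies in the genus of a root overlattice, or $C$ is unique in its genus---is the organizing principle you are missing. In the first case there are finitely many root overlattices in each rank, and a separate argument (Proposition~\ref{prop:no.leech.rank.25}, embedding $E_8^2$ and exhibiting two non-root-overlattices in the genus) rules out rank $\ge 25$. In the second case one invokes the Lorch--Kirschmer classification of definite lattices unique in their genus, which already bounds the rank by $10$, together with a recursive bound (Proposition~\ref{prop:bound.multiples}) on which rescalings can occur.

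Your final reconstruction step is also too loose. The paper does not merely bound the discriminant form; it proves a sharp structure result (Proposition~\ref{prop:Gram.matrix.Borcherds.lattices}): in a suitable basis the Gram matrix has $k=-1$ and the off-diagonal block $\underline{\ell}$ is bounded entrywise by $n$, where $n$ \emph{divides the scale of $C$}. Since almost all Leech type lattices have scale $1$, this immediately gives $L\cong U\oplus C$ in those cases, and for the remaining $31$ lattices of scale $>1$ it leaves a small finite search. Each candidate is then settled by running Borcherds' method to compute $\Aut(\mathcal{D}_L)$ and checking whether it fixes a cusp. Your horosphere/Vinberg picture is good intuition but does not by itself yield the decisive genus constraint.
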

The interested reader can find the complete list in the ancillary file (see the Appendix for the list in rank $\ge 11$).
Theorem \ref{thm:main.intro} has profound algebraic and geometric implications, concerning isometry groups of lattices, automorphism groups of K3 surfaces and their discrete dynamics. In order to present these consequences, let us review how the theories of hyperbolic lattices and of K3 surfaces are closely intertwined.

To a K3 surface $X$ we can associate its Picard lattice $\Pic(X)$, which is an even hyperbolic lattice. The Picard lattice $\Pic(X)$ encodes not only a precise characterization of the smooth rational curves (they correspond to the simple $(-2)$-roots) and linear systems on $X$, but also the structure of the automorphism group $\Aut(X)$. Indeed $\Aut(X)$ coincides up to a finite group with the quotient $\Or^+(\Pic(X))/W(\Pic(X))$ of isometries of $\Pic(X)$ up to $(-2)$-reflections. Geometrically, this quotient can be identified with the group of isometries of $\Pic(X)$ preserving the nef cone of $X$, but it has the advantage of being a completely lattice-theoretical object: the symmetry group of $\Pic(X)$.


\subsection{Symmetries}
For a hyperbolic lattice $L$, denote by $\mathcal{D}_L$ the closure of a fundamental domain for the action of the Weyl group on the positive cone of $L$. It is a locally convex polyhedral cone whose walls correspond to the simple $(-2)$-roots.
We call 
\[\Aut(\mathcal{D}_L)\cong \Or^+(L)/W(L)\] 
the \emph{symmetry group} of $L$. The structure of the symmetry group and the geometry of the fundamental domain $\mathcal{D}_L$ are closely related to $L$ being a Borcherds lattice or not.  

To state our result, we need two more definitions.
A primitive isotropic vector in $\mathcal{D}_L$ is called a cusp.
Furthermore, we say that a symmetry $f\in \Aut(\mathcal{D}_L)$ has \emph{zero entropy} if its spectral radius is at most one, and $L$ has \emph{zero entropy} if every symmetry of $L$ has zero entropy.

\begin{theorem} \label{thm:equivalence.intro}
Let $L$ be an even hyperbolic lattice.
The following are equivalent:
\begin{enumerate}[(a)]
    \item $L$ is a Borcherds lattice;
    \item $L$ has an infinite symmetry group and zero entropy;
    \item $\Aut(\mathcal{D}_L)$ is infinite and it preserves a unique cusp.
    \item $\mathcal{D}_L$ has a unique cusp with infinite stabilizer.
\end{enumerate}
\end{theorem}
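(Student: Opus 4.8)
The plan is to prove $(a)\Rightarrow(b)$, the cycle $(b)\Rightarrow(c)\Rightarrow(d)\Rightarrow(b)$, and finally $(c)\Rightarrow(a)$ to close the loop. Two elementary facts do most of the work, phrased in terms of the action of $\Or^+(L)$ on the real hyperbolic space $\mathbb{H}$ attached to the positive cone. First: if $f\in\Or^+(L)$ fixes a cusp $e$, then every eigenvalue of $f$ on $L\otimes\CC$ has absolute value $1$ — $f$ fixes $\RR e$, acts with finite order on the negative definite lattice $e^\perp/e$, and acts trivially on $L/e^\perp$ — so such an $f$ has zero entropy, and a loxodromic isometry never fixes a rational isotropic line. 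Second: the stabilizer of a cusp in $\Or^+(L)$ is virtually abelian (Eichler--Siegel transvections up to a finite group), hence a discrete group without loxodromic elements, and it acts cocompactly on the corresponding horosphere because $\Or^+(L)$ is arithmetic, so $\mathbb{H}/\Or^+(L)$ has finite volume. Finally, when $L$ has infinitely many simple $(-2)$-roots, $\mathcal{D}_L$ is a polytope with infinitely many facets, hence of infinite volume, whereas $\mathbb{H}/\Or^+(L)$ has finite volume; via $\Or^+(L)=W(L)\rtimes\Aut(\mathcal{D}_L)$ this forces $\Aut(\mathcal{D}_L)$ to be infinite.

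The cycle $(b)\Leftrightarrow(c)\Leftrightarrow(d)$ is then pure hyperbolic geometry. For $(d)\Rightarrow(b)$: for any symmetry $g$ the cusp $ge$ has stabilizer $g\,\mathrm{Stab}(e)\,g^{-1}$, again infinite, so by uniqueness $ge=e$; thus every symmetry fixes $e$ and the first fact gives zero entropy. Then $(d)\Rightarrow(c)$ is immediate, since $\Aut(\mathcal{D}_L)$ fixes $e$ and no other cusp can be preserved (that would be a second cusp of infinite stabilizer). For $(c)\Rightarrow(d)$: a second cusp $e'$ with infinite stabilizer gives an infinite discrete subgroup of $\Aut(\mathcal{D}_L)$ fixing the two distinct ideal points $e,e'$, hence preserving the geodesic joining them and fixing it pointwise — impossible for an infinite discrete group. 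For $(b)\Rightarrow(c)$: $\Aut(\mathcal{D}_L)\cong\Or^+(L)/W(L)$ is finitely generated and linear, so being infinite it has an element of infinite order, which by $(b)$ is parabolic; a ping-pong argument on horoballs forces $\Aut(\mathcal{D}_L)$ to fix the unique ideal fixed point $\xi$ of that parabolic. This $\xi$ is rational because it is the only $\Aut(\mathcal{D}_L)$-fixed point of the boundary quadric and $\Aut(\mathcal{D}_L)$ is defined over $\QQ$, and it lies in $\mathcal{D}_L$ as a limit of an orbit inside $\mathcal{D}_L$; so $\xi$ is the required unique cusp.

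For $(a)\Rightarrow(b)$: by definition $L$ has a cusp $e\in\mathcal{D}_L$ with $\langle e,r\rangle\le N$ for every simple root $r$ (moving the given isotropic vector into $\mathcal{D}_L$ by $W(L)$ does not increase this supremum, since distinct simple roots have non-negative inner products). Every symmetry $f$ permutes the simple roots, so $\langle f(e),r\rangle=\langle e,f^{-1}(r)\rangle\le N$ for all $r$: $f(e)$ is again a bounded cusp. But a lattice with infinitely many simple roots has at most one bounded cusp: if $e\ne e'$ were two of them, then $v:=e+e'\in\mathcal{D}_L$ satisfies $v^2=2\langle e,e'\rangle>0$ and $0\le\langle v,r\rangle\le 2N$ for every simple root $r$; writing $r=\tfrac{\langle v,r\rangle}{v^2}v+r_\perp$ with $r_\perp\in v^\perp$ negative definite, the number $r_\perp^2=-2-\langle v,r\rangle^2/v^2$ is bounded and $\langle v,r\rangle$ is an integer in $[0,2N]$, so only finitely many $r$ can occur — a contradiction. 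Hence every symmetry fixes $e$, so the first fact gives zero entropy, and $\Aut(\mathcal{D}_L)$ is infinite as noted above; this is $(b)$.

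Finally $(c)\Rightarrow(a)$, which I expect to be the main obstacle. From $(c)$, $\Aut(\mathcal{D}_L)$ fixes a cusp $e$ of $\mathcal{D}_L$, and one has a semidirect decomposition $\mathrm{Stab}_{\Or^+(L)}(e)=W_e\rtimes\Aut(\mathcal{D}_L)$, where $W_e$ is the reflection subgroup generated by the (finitely many) simple roots orthogonal to $e$. Since $\mathrm{Stab}_{\Or^+(L)}(e)$ acts cocompactly on the horosphere at $e$ and $W_e$ is finitely generated, $\Aut(\mathcal{D}_L)$ acts cocompactly on the chamber $\mathcal{D}'$ of $W_e$, which is the link of $e$ in $\mathcal{D}_L$ at large height. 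Combining this with local finiteness of $\mathcal{D}_L$ and the finiteness of $\mathrm{vol}(\mathbb{H}/\Or^+(L))$, one deduces that $\Aut(\mathcal{D}_L)$ acts on the set of walls of $\mathcal{D}_L$ with finitely many orbits; as $\langle e,\cdot\rangle$ is constant on each orbit it takes finitely many values on the simple roots, hence is bounded, which (together with the infinitude of the set of simple roots, forced since otherwise $\mathcal{D}_L$ has finite volume and $\Aut(\mathcal{D}_L)$ is finite) gives $(a)$. The genuinely delicate steps are the semidirect product description of the cusp stabilizer and, above all, the implication "$\Aut(\mathcal{D}_L)$ cocompact on the link and $\mathbb{H}/\Or^+(L)$ of finite volume $\Rightarrow$ finitely many orbits of walls", where the finite-volume structure of the quotient orbifold is really used — this is the technical heart. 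One must also check separately that if $L$ has only finitely many simple roots then $(b)$–$(d)$ fail too; this is a short argument. The remaining ingredients — arithmeticity of $\Or^+(L)$, the eigenvalue computation, the horoball ping-pong lemma, the finiteness of the number of facets of a finite-volume hyperbolic polytope, the structure of cusp stabilizers — I would simply quote.
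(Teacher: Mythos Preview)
Your argument is correct and follows essentially the same route as the paper: the uniqueness of a bounded cusp (your $(a)\Rightarrow(b)$ matches the paper's $(a)\Rightarrow(d)$ almost verbatim), the passage from zero entropy to a common fixed cusp via the ``no loxodromics $\Rightarrow$ common fixed boundary point'' principle (the paper quotes \cite[Theorem~12.2.3]{ratcliffe} for exactly your ping-pong step), and the return to $(a)$ via finiteness of the $\Aut(\mathcal{D}_L)$-orbits of simple roots.

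The one place you make life harder than necessary is $(c)\Rightarrow(a)$. You try to deduce ``finitely many $\Aut(\mathcal{D}_L)$-orbits of walls'' from cocompactness on the horosphere together with finite covolume, and flag this as the technical heart. In fact it is a one-liner that the paper simply uses without comment: $\Aut(\mathcal{D}_L)$-orbits of simple roots are in bijection with $\Or^+(L)$-orbits of $(-2)$-roots (because $W(L)$ is simply transitive on chambers), and the latter set is finite by standard reduction theory for indefinite lattices. The paper just picks orbit representatives $r_1,\dots,r_m$ and sets $N=\max_i\, e\cdot r_i$. Your semidirect-product and horosphere-link machinery is valid but unnecessary.

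One detail you dispatch by contraposition that the paper handles constructively: showing $L$ actually contains a $(-2)$-root (needed before invoking Proposition~\ref{prop:finitely.many.roots}). Starting from the unique cusp $e$ with infinite stabilizer, the paper manufactures a second positive primitive isotropic vector $v$; either $v\notin\mathcal{D}_L$ (so some root pairs negatively with it) or $v$ has finite stabilizer (so $v^\perp/\langle v\rangle$ is a root overlattice by Proposition~\ref{prop:generalization.Shioda.Tate}, and its roots lift to roots of $L$). Either way a $(-2)$-root appears. Your alternative---observing that if $L$ had no roots then $\Aut(\mathcal{D}_L)=\Or^+(L)$ has finite covolume and cannot fix a boundary point---also works, but you should state it rather than leave it as ``a short argument''.
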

The equivalence of (b), (c) and (d) is known. See the work of Nikulin  \cite[Theorem~9.1.1]{nikulin.factor.groups} and Oguiso \cite[Theorem~1.4]{oguiso.entropy}; we give an alternative proof by means of hyperbolic geometry.
We refer to Theorem \ref{thm:equivalence.characterizations.Borcherds} for the complete statement with all the equivalent characterizations of Borcherds lattices.

The connection with Borcherds lattices and automorphisms of K3 surfaces leads us to a closer study of symmetry groups of hyperbolic lattices.
According to the Tits alternative \cite{detinko.flannery.obrien.tits} there are two options for $\Aut(\mathcal{D}_L)$. Either it is virtually solvable or it contains a free non-abelian subgroup, where we say the that a property of a group holds virtually if it holds for a finite index subgroup.

If the rank of $L$ is at most $2$, standard arguments show that the symmetry group of $L$ is either finite, or virtually abelian.
Moreover, thanks to the work of Nikulin \cite{nikulin.finite.aut.3,nikulin.finite.aut.greater.5} and Vinberg \cite{vinberg.finite.aut.4}, we already have a complete classification of hyperbolic lattices of rank $\ge 3$ with finite symmetry group, consisting of $118$ lattices. Therefore in the following we will restrict our attention to hyperbolic lattices with infinite symmetry group.

As a consequence of Theorem \ref{thm:equivalence.intro}, the symmetry group of a Borcherds lattice $L$ coincides with the stabilizer of a cusp of $\mathcal{D}_L$, and consequently it is virtually abelian by Proposition \ref{prop:generalization.Shioda.Tate}. Surprisingly a partial converse holds. It is a slight strengthening of a theorem of Nikulin \cite[Theorem~9.1.1]{nikulin.factor.groups}.

\begin{theorem}\label{thm:virtualy-solvable5}
Every hyperbolic lattice of rank at least $5$ with an infinite, virtually solvable symmetry group is a Borcherds lattice.
\end{theorem}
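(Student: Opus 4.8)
The plan is to play the Tits alternative against the classification of elementary discrete isometry groups of hyperbolic space, the only genuine difficulty being the case that Theorem~\ref{thm:equivalence.intro} forbids for Borcherds lattices. By the equivalence of (a) and (b) in Theorem~\ref{thm:equivalence.intro}, and since $\Aut(\mathcal{D}_L)$ is infinite by hypothesis, it suffices to prove that $L$ has zero entropy, i.e.\ that $\Aut(\mathcal{D}_L)$ contains no loxodromic isometry. Write $n=\rk L$ and identify $G:=\Aut(\mathcal{D}_L)$ with the subgroup of $\Or^+(L)$ stabilizing $\mathcal{D}_L$: this is legitimate because $W(L)$ permutes the chambers simply transitively, so the only element of $W(L)$ fixing $\mathcal{D}_L$ is the identity. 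Hence $G$ is a discrete subgroup of $\mathrm{Isom}(\mathbb{H}^{n-1})$.

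Because $G$ is infinite and virtually solvable it cannot be non-elementary: by the Tits alternative a non-elementary discrete subgroup of $\mathrm{Isom}(\mathbb{H}^{n-1})$ contains a non-abelian free subgroup. So $G$ is elementary, and being infinite it is not elliptic; thus it is of exactly one of two types. If $G$ is of \emph{parabolic type} --- it fixes a unique point $\xi\in\partial\mathbb{H}^{n-1}$ and contains no loxodromic element --- then every symmetry of $L$ has spectral radius $1$, so $L$ has zero entropy and is a Borcherds lattice by Theorem~\ref{thm:equivalence.intro}. (Equivalently: a parabolic $p\in G$ fixes a rational isotropic line, and the accumulation $p^k(x_0)\to\xi$ of an orbit inside $\mathcal{D}_L$ exhibits $\xi$ as the unique $G$-invariant cusp, which is characterization~(c).) There remains the \emph{loxodromic type}, in which $G$ preserves a geodesic $\gamma$ and contains a loxodromic $f$; this is the case where the hypothesis $n\ge 5$ must be used, and ruling it out is the heart of the matter.

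Here is how I would set up that exclusion. Let $f\in G$ be loxodromic, with axis $\gamma$, boundary fixed points $\xi_\pm$, and invariant plane $V:=\langle\xi_+,\xi_-\rangle\subset L\otimes\RR$ of signature $(1,1)$; put $W:=V^\perp$, negative definite of rank $n-2\ge 3$. One first shows $\rk(V\cap L)\ne 1$, for this sublattice is $f$-invariant and on a rank-$1$ lattice $f$ acts by $\pm 1$, contradicting that the eigenvalues of $f|_V$ are $\lambda(f)^{\pm 1}\ne\pm 1$. Hence either $V\cap L=0$, and then $f$ is a \emph{Salem automorphism} --- its characteristic polynomial factors as $S\cdot C$ with $S$ the minimal polynomial of the Salem number $\lambda(f)$, of even degree $d\ge 4$, and $C$ a product of cyclotomic polynomials, so that $L\otimes\QQ$ splits $f$-invariantly into a rank-$d$ Salem part and a negative-definite cyclotomic part $L_C$ of rank $n-d$ --- or $V\cap L$ is a non-split rank-$2$ hyperbolic sublattice whose orthogonal complement in $L$ is negative definite of rank $n-2\ge 3$. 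In either case, after replacing $f$ by a power so that it acts trivially on the rational negative-definite data, and observing that $W(L)\rtimes\langle f\rangle$ has finite index in $\Or^+(L)$ (it maps onto a finite-index subgroup of $G$), one sees that the Weyl chamber $\mathcal{D}_L$ has finite volume modulo the loxodromic $\ZZ$-action of $f$. The remaining task is to show that, when $n\ge 5$, this --- together with the negative-definite invariant data of rank $\ge 3$ --- is incompatible with $\mathcal{D}_L$ having infinitely many walls: this is essentially \cite[Theorem~9.1.1]{nikulin.factor.groups}, for which we would give a proof by hyperbolic geometry, the rank bound ultimately resting on the classification of reflective hyperbolic lattices in small rank (Vinberg, Nikulin).

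The main obstacle is precisely this last step: showing that a hyperbolic lattice of rank $\ge 5$ carrying a positive-entropy symmetry must have a symmetry group that is not virtually solvable. This is where the threshold $5$ genuinely enters --- in ranks $3$ and $4$ there do exist lattices whose symmetry group is infinite cyclic, generated by a loxodromic (Salem) automorphism --- and I expect completing it to require either a volume-and-convexity estimate in $\mathbb{H}^{n-1}$ for $n-1\ge 4$, ruling out the ``fibered'' reflection chambers that such a situation would produce, or an explicit reduction to the finite list of reflective lattices of rank $\le 4$ that could occur inside $L$, combined with control of the embedding $W(L)\hookrightarrow\Or^+(L)$.
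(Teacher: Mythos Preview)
Your setup via the elementary/non-elementary dichotomy is sound, and you correctly isolate the only obstruction: ruling out that $G=\Aut(\mathcal{D}_L)$ is an elementary group of loxodromic type when $\rk L\ge 5$. But you do not actually do this; you explicitly leave it as ``the main obstacle'' and sketch only a programme (Salem decomposition, volume estimates, reduction to reflective lattices). None of those ingredients is brought to a conclusion, so the proposal is incomplete precisely at the point where the rank hypothesis must bite.

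The paper's argument avoids this difficulty entirely by a different route. Rather than excluding loxodromic type by geometric or volume considerations, it imports a structural input: Nikulin's theorem that every hyperbolic lattice of rank $\ge 6$ with infinite symmetry group admits a cusp $\vect{e}$ with infinite stabilizer (and in rank $5$ the only exceptions are two explicit one-parameter families, $\langle 2^m\rangle\oplus D_4$ and $\langle 2\cdot 3^{2m-1}\rangle\oplus A_2^2$, whose symmetry groups are checked directly not to be virtually abelian). Given such a cusp, pick $g'\in\Stabe$ of infinite order; for arbitrary $g\in G$ the virtual abelianness gives $n$ with $g^n g'^n=g'^n g^n$, whence $g'^n$ fixes $g^n(\vect{e})$. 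If $g^n(\vect{e})\ne\vect{e}$ then $g'^n$ fixes the positive-square vector $\vect{e}+g^n(\vect{e})$, contradicting its infinite order. Thus $\Stabe$ has finite index in $G$, and Theorem~\ref{thm:equivalence.intro} finishes. The point is that the existence of a parabolic of infinite order in $G$ (guaranteed by Nikulin in rank $\ge 5$ up to the explicit exceptions) already forces $G$ out of the loxodromic type by a three-line commutation argument; your attempted direct exclusion is unnecessary.
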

See Remark \ref{rk:useful.remarks.borcherds} for counterexamples in rank $\le 4$. As a corollary we obtain the  classification of hyperbolic lattices of rank at least $5$ and virtually solvable automorphism group.

\subsection{Consequences for K3 surfaces}
We work over an algebraically closed field $k=\bar k$ of arbitrary characteristic.
The (algebraic) entropy of an automorphism $f$ of a smooth projective surface $X$ is defined as the logarithm of the spectral radius of its induced action on the Picard lattice $\Pic(X)$ by pullback, and it is a nonnegative number. If $f$ is an automorphism of a smooth projective \emph{complex} surface $X$, it agrees with the topological entropy of $f$, which measures how fast points spread apart under the iteration of $f$.
Nevertheless, the entropy is a measure for its dynamical complexity in any characteristic.
Indeed $f$ is of zero entropy if and only if either it is of finite order or it preserves an elliptic fibration \cite[Thm. 2.11,\S 2.4.5]{cantat:dynamics_of_compact_complex}. The class of a fiber in $\Pic(X)$ is a cusp of the nef cone.

K3 surfaces are one of the few classes of surfaces, together with rational, abelian and Enriques surfaces, that can admit automorphisms of positive entropy \cite{cantat.dynamics.K3.1,cantat.dynamics.K3.2}. It is therefore relevant, from a dynamical standpoint, to understand which K3 surfaces admit an automorphism of positive entropy. 
We completely answer this question by providing an exhaustive classification of K3 surfaces of \emph{zero entropy}, i.e. K3 surfaces admitting only automorphisms of zero entropy. In fact combining Theorem \ref{thm:main.intro} and Theorem \ref{thm:equivalence.intro} we obtain:

\begin{corollary} \label{cor:main.intro.K3}
A K3 surface $X$ has zero entropy and an infinite automorphism group if and only if its Picard lattice is a Borcherds lattice, or equivalently if $\Pic(X)$ is one of the lattices classified in Theorem \ref{thm:main.intro}.
\end{corollary}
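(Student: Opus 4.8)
The plan is to transport the equivalence (a)$\Leftrightarrow$(b) of Theorem~\ref{thm:equivalence.intro} across the dictionary between a K3 surface $X$ and its Picard lattice $L:=\Pic(X)$, which is an even hyperbolic lattice. Recall from the introduction that the natural homomorphism
\[\pi\colon \Aut(X)\longrightarrow \Or^+(L)/W(L)\cong \Aut(\mathcal{D}_L),\qquad f\longmapsto [f^*]\]
has finite kernel and image of finite index: indeed $f^*$ preserves the nef cone of $X$, which is a $W(L)$-translate of $\mathcal{D}_L$, so $f^*$ is conjugate in $\Or^+(L)$ to the isometry $\bar f\in\Aut(\mathcal{D}_L)$ representing $\pi(f)$. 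The first step is to note that $\pi$ is compatible with the two notions of entropy: by definition the entropy of $f$ is the logarithm of the spectral radius of $f^*$ on $L$, and since $f^*$ is conjugate to $\bar f$ the two have the same spectral radius; hence $f$ has zero entropy if and only if the symmetry $\pi(f)$ does.

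Next I would pass from this pointwise comparison to the level of groups. Since $\Ker\pi$ is finite, $\Aut(X)$ is infinite if and only if $\Img\pi$ is, and since $\Img\pi$ has finite index in $\Aut(\mathcal{D}_L)$, this holds if and only if $\Aut(\mathcal{D}_L)$ itself is infinite. Moreover $X$ has zero entropy exactly when every element of $\Img\pi$ has spectral radius at most $1$; because $\Img\pi$ has finite index and the spectral radius is multiplicative under powers, any symmetry $g\in\Aut(\mathcal{D}_L)$ then has spectral radius at most $1$ as well, since some power $g^n$ lies in $\Img\pi$. Thus $X$ has zero entropy if and only if $L$ has zero entropy in the sense of the paper.

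Combining the two reductions, $X$ has zero entropy and infinite automorphism group if and only if $L=\Pic(X)$ has an infinite symmetry group and zero entropy, which by Theorem~\ref{thm:equivalence.intro} means precisely that $L$ is a Borcherds lattice, and by Theorem~\ref{thm:main.intro} that $L$ is one of the $194$ lattices classified there. (The rank-$26$ lattice $\mathrm{II}_{1,25}$ on that list is never realized as a Picard lattice for dimension reasons, since $\rk\Pic(X)\le 22$; pinning down exactly which of the remaining lattices occur --- the $193$ of the abstract --- is a separate question and is not needed here.)

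Everything above is routine once one grants the single substantial input: the finiteness of $\Ker\pi$ and the finite index of $\Img\pi$. Over $\CC$ this is classical, combining the Torelli theorem with Sterk's finiteness theorem for the nef cone; in arbitrary characteristic I would invoke the crystalline and $\ell$-adic Torelli statements together with the cone conjecture for K3 surfaces, as recalled in the earlier sections. Granting this, the corollary is a formal consequence of Theorems~\ref{thm:equivalence.intro} and~\ref{thm:main.intro}, requiring no further geometry.
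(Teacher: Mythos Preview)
Your argument is essentially the paper's own, and it is correct wherever the finite-index property of $\Img\pi$ is known. The one substantive difference is precisely the point you flag at the end: you rely on Torelli/cone-conjecture input to get finite index of $\Img\pi$ in $\Aut(\mathcal{D}_L)$, whereas the paper deliberately avoids this. As the paper notes in Section~\ref{sec:K3}, finite cokernel is \emph{not} established for certain supersingular K3 surfaces in characteristic $2$ and $3$, so your proof is conditional in those cases. The paper's workaround is to observe that the only automorphisms one actually needs are translations by the Mordell--Weil group of (the Jacobian of) a genus one fibration: if $\Pic(X)$ is a Borcherds lattice, the unique cusp $\vect{e}$ with infinite stabilizer gives a genus one fibration with infinite Mordell--Weil group, and these translations are constructed geometrically without Torelli, yielding $\Aut(X)$ infinite. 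Conversely, any cusp with infinite stabilizer is witnessed by such Mordell--Weil automorphisms of $X$, so if $X$ has zero entropy one can run the parabolic-isometry argument of Theorem~\ref{thm:equivalence.characterizations.Borcherds}(b)$\Rightarrow$(c) inside $\Img\pi$ itself to conclude uniqueness of the cusp and hence that $\Pic(X)$ is Borcherds. In short: your reduction via finite index is the clean conceptual route and suffices in characteristic $0$ and $p>3$; the paper trades it for the Mordell--Weil construction to make the statement unconditional in all characteristics.
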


In view of Theorem \ref{thm:virtualy-solvable5}, this includes a classification of K3 surfaces with virtually solvable automorphism group and Picard rank at least 5.

By the surjectivity of the period map, a hyperbolic lattice $L$ is the Picard lattice of some complex K3 surface if and only if $L$ embeds primitively into the K3 lattice $U^3\oplus E_8^2$. Quite remarkably, we observe that all Borcherds lattices, with the obvious exception of $\mathrm{II}_{1,25}$, embed primitively into the K3 lattice.
This leads to a list of exactly $193$ families of complex K3 surfaces whose very general member has zero entropy.

Let us state some important consequences of Corollary \ref{cor:main.intro.K3}, which we collect in the following corollary.

\begin{corollary} \label{cor:main.consequences}
The following K3 surfaces admit an automorphism of positive entropy, and in particular their automorphism group is not virtually solvable: 
\begin{itemize}
    \item Kummer surfaces in characteristic $0$ or $p>2$;
    \item K3 surfaces covering an Enriques surface, unless $\Pic(X)\cong U\oplus E_8\oplus D_8$;
    \item Singular and supersingular K3 surfaces;
    \item K3 surfaces with an elliptic fibration of Mordell-Weil rank at least 9.
\end{itemize}
\end{corollary}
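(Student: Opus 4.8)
The plan is to reduce every assertion to a finite verification against the classification. By Corollary~\ref{cor:main.intro.K3} together with the Nikulin--Vinberg classification of K3 surfaces with finite automorphism group, a K3 surface $X$ of zero entropy has $\Pic(X)$ isometric to one of the lattices in an explicit finite list $\mathcal L$: the $193$ Borcherds lattices other than $\mathrm{II}_{1,25}$ (each of which embeds primitively into the K3 lattice, as observed above) together with the $118$ even hyperbolic lattices of rank $\ge 3$ with finite symmetry group; all of these have rank at most $20$. For each of the four families of K3 surfaces in the statement I would then intersect $\mathcal L$ with the sublist of lattices realizable as $\Pic(X)$ for a surface of that type --- a condition which is lattice-theoretic in every case --- and check that the intersection is empty, except for K3 surfaces covering an Enriques surface, where it consists of the single lattice $U\oplus E_8\oplus D_8$.

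For Kummer surfaces I would invoke Nikulin's lattice-theoretic characterization (valid in characteristic $\ne 2$): $X$ is a Kummer surface if and only if the Kummer lattice $\Pi$ --- the rank-$16$, $2$-elementary overlattice of $\langle-2\rangle^{16}$ generated by the classes of the sixteen exceptional curves and their halved partial sums --- embeds primitively into $\Pic(X)$. Since $\Pi$ has rank $16$ and its orthogonal complement inside the Picard lattice of a Kummer surface is nonzero, only lattices of rank $\ge 17$ in $\mathcal L$ are candidates, and I would check that none of them admits a primitive embedding of $\Pi$. For K3 surfaces $X$ covering an Enriques surface I would use that the Enriques involution acts by $-1$ on $H^{2,0}(X)$, so that its invariant lattice $U(2)\oplus E_8(2)$ sits primitively inside $\Pic(X)$; equivalently --- this is the criterion of Keum and Namikawa --- the transcendental lattice $T(X)$ embeds primitively into $U\oplus U(2)\oplus E_8(2)$, and conversely this condition suffices. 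Running over the lattices of rank $\ge 10$ in $\mathcal L$, I expect exactly one of them, $U\oplus E_8\oplus D_8$, to satisfy this, and it does occur, namely as the Picard lattice of the K3 cover of a suitable Enriques surface.

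For singular and supersingular K3 surfaces I would use that the Picard number equals $20$, respectively $22$. Since every lattice in $\mathcal L$ has rank at most $20$, no supersingular K3 surface can have zero entropy; and for the rank-$20$ members of $\mathcal L$ I would check that none of them is the Picard lattice of a singular K3 surface --- alternatively, a singular K3 surface carries a positive-entropy automorphism obtained directly from the $\mathrm{CM}$ action on the abelian surface in its Shioda--Inose structure. For the final item, let $\pi\colon X\to\PP^1$ be an elliptic fibration with $\rk\MW(\pi)\ge 9$ and suppose $X$ has zero entropy. Then $\MW(\pi)$ acts on $X$ by fibre-preserving automorphisms, hence --- up to finite kernel and finite index --- embeds into the stabilizer in $\Aut(\mathcal D_{\Pic(X)})$ of the cusp $\vect f$ defined by a fibre. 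By Theorem~\ref{thm:equivalence.intro}(d) this stabilizer is finite unless $\vect f$ is the unique cusp fixed by the whole symmetry group; as $\rk\MW(\pi)\ge 9$, the class $\vect f$ must be that distinguished cusp, so $\Aut(\mathcal D_{\Pic(X)})$ is itself virtually abelian of rank $\rk\MW(\pi)\ge 9$ by Proposition~\ref{prop:generalization.Shioda.Tate}. As $\Pic(X)$ is then a Borcherds lattice, it remains to check on the list that the symmetry group of every Borcherds lattice embedding into the K3 lattice is virtually abelian of rank at most $8$, yielding the desired contradiction.

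The routine part is the reduction to $\mathcal L$; the real work --- and what I expect to be the main obstacle --- is the explicit verification on $\mathcal L$: enumerating primitive embeddings of $\Pi$ and of $U(2)\oplus E_8(2)$ into the finitely many lattices of $\mathcal L$ of the relevant rank (together with the fixed-point-free test in the Enriques case), ruling out the rank-$20$ Picard lattices of singular K3 surfaces, and computing the symmetry group --- equivalently, the maximal Mordell--Weil rank of an elliptic fibration --- of each Borcherds lattice of rank $\ge 11$. These are finite computations carried out on the explicit list.
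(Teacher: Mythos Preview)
Your proposal is correct and follows the same overall strategy as the paper: reduce to the finite list $\mathcal{L}$ via the classification and then verify a lattice-theoretic condition case by case. The one substantive difference is in the Kummer case: you test for a primitive embedding of the Kummer lattice $\Pi$ into $\Pic(X)$, whereas the paper instead checks whether $T(X)$ embeds primitively into $U(2)^3$ in characteristic $0$ and handles characteristic $p>2$ separately by lifting $X$ to characteristic $0$ together with its full Picard group. Your criterion is arguably cleaner, since the forward implication (Kummer $\Rightarrow \Pi\hookrightarrow\Pic(X)$ primitively) holds uniformly in characteristic $\neq 2$ and avoids the lifting step; the paper's criterion, on the other hand, reduces to a quick check on transcendental lattices of rank $\leq 3$. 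One small simplification available to you: the list $\mathcal{L}$ in fact contains no lattice of rank $20$ at all (Borcherds lattices other than $\mathrm{II}_{1,25}$ have rank $\leq 18$, and lattices with finite symmetry group have rank $\leq 19$), so the singular case is vacuous and the alternative argument via the Shioda--Inose structure is unnecessary.
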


We refer the reader to Remark \ref{rk:U.E8.D8} for a detailed explanation of the geometry of K3 surfaces with Picard lattice $U\oplus E_8\oplus D_8$.

The problem of determining the list of hyperbolic lattices of zero entropy has a long history. Nikulin showed in \cite{nikulin.finite.aut.greater.5} that several $2$-elementary Picard lattices of K3 surfaces have zero entropy, and he obtained a partial classification of K3 surfaces of zero entropy and Picard rank $3$ in \cite[Theorem~3~and~the~subsequent~discussion]{nikulin.interesting}. On the other hand, Oguiso \cite[Theorem~1.6]{oguiso.entropy} showed that every singular K3 surface has positive entropy.
More recently, the second author obtained in \cite[Theorem~6.12]{mezzedimi.entropy} a classification of Picard lattices of K3 surfaces of zero entropy admitting an elliptic fibration with only irreducible fibers: the list comprises of $32$ lattices. Moreover he showed that every K3 surface with infinite automorphism group and Picard rank $\ge 19$ has positive entropy, extending Oguiso's result.

The classification of K3 surfaces of zero entropy is independently obtained by Yu \cite[Theorem~1.1]{yu.entropy} in his recent preprint. We note that our classification in Corollary \ref{cor:main.intro.K3} agrees with Yu's.

\subsection{Strategy}
The first step towards Theorem \ref{thm:main.intro} is the classification of \emph{Leech type lattices}, i.e. negative definite lattices $W$ such that $L=U\oplus W$ is a Borcherds lattice.
According to Proposition \ref{prop:genus.Leech.type.lattice}, if $W$ is a Leech type lattice, then the genus of $W$ contains precisely one lattice that is not an overlattice of a root lattice. This naturally divides our work into two parts: when $W$ is an overlattice of a root lattice, and when $W$ is unique in its genus. Our strategy is to first reduce to a finite problem, by excluding all but finitely many negative definite lattices, and then checking whether the remaining ones are of Leech type. In order to decide whether a certain hyperbolic lattice is a Borcherds lattice, we compute its symmetry group via Borcherds' method, and we check whether the symmetry group preserves an isotropic vector.

The second and final step towards the classification in Theorem \ref{thm:main.intro} concerns hyperbolic lattices that do not contain a copy of $U$. In order to deal with this case, we show in Proposition \ref{prop:Gram.matrix.Borcherds.lattices} that every Borcherds lattice $L$ is a sublattice of ``small'' index of a second Borcherds lattice $L'=U\oplus W$ containing a copy of $U$. Since we already have a complete classification of Leech type lattices, this is enough to produce a finite list of candidate lattices, which again we study individually to decide whether they are Borcherds lattices. 

\subsection{Outline}
In Section \ref{sec:lattices} we recall some well-known properties of negative definite and hyperbolic lattices. In Section \ref{sec:Borcherds.lattices} we introduce Borcherds lattices and we prove Theorem \ref{thm:equivalence.intro}, showing several equivalent characterizations of Borcherds lattices. 
Section \ref{sec:classification} is the core of the article: we obtain first the classification of Leech type lattices in Theorem \ref{thm:leech.classification}, and then the classification of Borcherds lattices in Theorem \ref{thm:classification.Borcherds.lattices}. Finally, we devote Section \ref{sec:K3} to applications of our classification to K3 surfaces. We obtain the classification of K3 surfaces of zero entropy as in Corollary \ref{cor:main.intro.K3} and we deduce Corollary \ref{cor:main.consequences}.
See the appendix \ref{sec:borcherds.method} for a brief review of Borcherds' method as well as improvements to the algorithm.
Our implementation of Borcherds' method is part of the computer algebra system OSCAR \cite{OSCAR}. 

\subsection*{Acknowledgements}
We thank Gebhard Martin for several helpful discussions. The second author wishes to thank the University of Hannover, where part of this work was carried out, for the stimulating environment during his time as PhD student.

\section{Preliminaries on lattices} \label{sec:lattices}

In this section we recall the basics of lattices, with particular emphasis towards negative definite and hyperbolic even lattices. The main references are \cite{nikulin.integral.symmetric.bilinear}, \cite{conway.sloane.lattices} and \cite{ebeling.lattices}.

\subsection{Basic definitions} 

A \emph{lattice} is a finitely generated abelian group $L$ endowed with a symmetric, nondegenerate, integral bilinear form. 
$L$ is \emph{even} if the square of every vector of $L$ is an even number, otherwise it is \emph{odd}. We will be mainly interested in even lattices, so in the following every lattice will be even, unless otherwise specified.

The \emph{rank} $\rk(L)$ of $L$ is its rank as an abelian group, and the \emph{discriminant} $\disc(L)$ is the absolute value of the determinant of the Gram matrix of $L$ with respect to any basis. A lattice is called \emph{unimodular} if it has discriminant $1$.
The \emph{signature} $(l_+,l_-)$ of $L$ is the signature of the real bilinear form on the real vector space $L\otimes \RR$. We say that $L$ is \emph{positive} (resp. \emph{negative}) \emph{definite} if its signature is $(\rk(L),0)$ (resp. $(0,\rk(L))$), and \emph{hyperbolic} if its signature is $(1,\rk(L)-1)$.
We denote by $U$ the hyperbolic plane, i.e. the only even, unimodular, hyperbolic lattice of rank $2$.

For a lattice $L$ and an integer $n\ne 0$, we will denote $L(n)$ the lattice with the bilinear form of $L$ multiplied by $n$. In particular a lattice $L$ is positive definite if and only if $L(-1)$ is negative definite. If $n>0$, we will refer to the lattices $L(n)$ as the \emph{multiples} of $L$.

The \emph{dual lattice} $L$ is defined as $L^\vee=\{\vect{v}\in L\otimes \QQ : \vect{v}.L\subseteq \ZZ\}$, together with the natural extension of the bilinear form on $L$. The \emph{discriminant group} of the even lattice $L$ is the finite group $A_L=L^\vee/L$, together with the finite quadratic form with values in $\QQ/2\ZZ$ defined by $\overline{\vect{v}}.\overline{\vect{v}}=\vect{v}.\vect{v} \pmod{2\ZZ}$, where $\overline{\vect{v}}$ denotes the class in $A_L$ of $\vect{v}\in L^\vee$.
The cardinality of $A_L$ coincides with the discriminant of the lattice $L$. The \emph{length} $\ell(A_L)$ is defined as the minimal number of generators of the abelian group $A_L$, and clearly $\ell(A_L)\le \rk(L)$.

\subsection{Overlattices}

Given a lattice $L$, we say that $M$ is an \emph{overlattice} of $L$ if $M$ contains $L$ and the index $[M:L]$ as abelian groups is finite. In particular the overlattices of $L$ have the same signature of $L$.

Recall that the overlattices of a given lattice $L$ correspond to isotropic subgroups of the discriminant group $A_L$ {\cite[Proposition~1.4.1]{nikulin.integral.symmetric.bilinear}}.

\subsection{Root lattices and root overlattices}

If $L$ is a lattice, a \emph{$(-2)$-root} is a vector of $L$ of square $-2$. We denote the set of $(-2)$-roots of $L$ by $\Delta_L$.
The sublattice $L_{root}$ of $L$ spanned by the $(-2)$-roots is called the \emph{root part} of $L$.

A negative definite lattice $R$ is called a \emph{root lattice} if it admits a generating set of $(-2)$-roots, i.e. if $R_{root}=R$ holds. Any root lattice can be decomposed as a direct sum of \emph{ADE lattices}, i.e. of the lattices $A_n$, $D_n$ (for $n\ge 4$) and $E_n$ (for $6\le n \le 8$) \cite[Theorem~1.2]{ebeling.lattices}. ADE lattices correspond to (simply laced) Dynkin diagrams. In particular there are only finitely many root lattices of rank $r$ up to isometry.

A \emph{root overlattice} $W$ is a negative definite lattice that is an overlattice of a root lattice, or equivalently such that $\rk(W_{root})=\rk(W)$. Since the overlattices of a root lattice $R$ correspond to certain subgroups of the finite discriminant group $A_R$, we obtain that there are only finitely many root overlattices of rank $r$ up to isometry.

\subsection{Genus of a lattice}

Two lattices $L$ and $M$ with the same signature are \emph{in the same genus} if $A_L\cong A_M$ as finite quadratic spaces, i.e. there exists an isomorphism of groups $A_L\cong A_M$ preserving the quadratic forms. 
Equivalently, $L$ and $M$ are in the same genus if and only if $U\oplus L$ and $U\oplus M$ are isometric (cf. {\cite[Corollary~1.13.4]{nikulin.integral.symmetric.bilinear}}).

The \emph{genus} of $L$ is the list of all lattices in the genus of $L$, considered up to isometry.

While many indefinite lattices are unique in their genus (for instance, all lattices of the form $U\oplus L$, cf. {\cite[Corollary~1.13.3]{nikulin.integral.symmetric.bilinear}}), most definite lattices are not. In a series of papers Watson produced by hand the finite (up to multiples and isometry) list of positive definite lattice of rank at least $3$ which are unique in their genus. Later Watson's results were completed, corrected and extended with computer aid by Lorch and Kirschmer \cite{lorch.kirschmer.single.class} and Voight \cite{voight}. See the catalogue of lattices \cite{catalogue} for the list. Unfortunately, the classification in rank $2$ is still conditional on the Generalized Riemann Hypothesis (GRH). We explain in Section \ref{sec:GRH} how we bypass the classification in rank $2$ in order to make our results independent of the GRH.

\subsection{Primitive embeddings}

An \emph{embedding} $i:L\hookrightarrow M$ of lattices is an injective homomorphism that preserves the bilinear products. The embedding $i:L\hookrightarrow M$ is said to be \emph{primitive} if the cokernel $M/i(L)$ is torsion free. If it is not primitive, its \emph{saturation} is the smallest primitive sublattice of $M$ containing the image $i(L)$.

\subsection{Fundamental domain of a hyperbolic lattice}\label{sect:fundamental domain}
Our account follows \cite[Chapter 8, \S 2]{huybrechts.K3}.
In this section $L$ will always denote a hyperbolic lattice.
The positive cone $\mathcal{P}_L$ of $L$ is a fixed connected component of $\{\vect{x}\in L\otimes \RR : \vect{x}^2>0\}$.
Let $\mathbb{H}_L$ be the sheet of the hyperboloid $\{\vect{x}\in L\otimes \RR : \vect{x}^2 = 1\}$ contained in $\mathcal{P}_L$.
Since the signature of $L\otimes \RR$ is $(1,\rk(L)-1)$, $\mathbb{H}_L$ is a model for the hyperbolic space of dimension $\rk(L)-1$. We denote by $\overline{\mathbb{H}}_L$
the \emph{conformal ball model} of hyperbolic space, cf. \cite[§4.5]{ratcliffe}. The boundary points $\partial \mathbb{H}_L$ correspond to the isotropic rays of $L$. Note that any isometry of $\mathbb{H}_L$ extends to $\overline{\mathbb{H}}_L$.
We will denote by $\Or^+(L)$ the group of isometries of $L$ preserving $\mathcal{P}_L$.

A $(-2)$-root $\vect{r}$ in $L$ induces the \emph{reflection} $s_{\vect{r}}\in \Or^+(L)$ into the mirror $\vect{r}^\perp$ such that $s_{\vect{r}}(\vect{v})=\vect{v}+(\vect{v}.\vect{r})\vect{r}$ for any $\vect{v}\in L$. 
The \emph{Weyl group} of $L$ is the subgroup $W(L)$ of $\Or^+(L)$ generated by the reflections in $(-2)$-roots of $L$. 
 The mirrors cut the positive cone into connected components, called \emph{chambers}, and the Weyl group acts simply transitively on the set of all chambers. 
 We will denote by $\mathcal{D}_L$ the closure (in $L_\RR$) of a fundamental domain for the action of $W(L)$ on the positive cone, i.e. $\mathcal{D}_L$ is the closure of a chamber.  

Since the Weyl group $W(L)$ is normal in $\Or^+(L)$, we can consider the quotient $\Or^+(L)/W(L)$ of isometries of $L$ up to reflections. By construction it can be identified with the group $\Aut(\mathcal{D}_L)$ of isometries of $L$ preserving $\mathcal{D}_L$, and it is called the \emph{symmetry group} of $L$.

A vector $\vect{v}\in L$ is said to be \emph{fundamental} if it belongs to the chosen fundamental domain $\mathcal{D}_L$. A \emph{cusp} of $\mathcal{D}_L$ is a primitive isotropic vector $\vect{e} \in L\cap \mathcal{D}_L$. In particular it is fundamental.

If $\vect{v}\ne \vect{0}$, $\vect{v}$ is \emph{positive} if it has nonnegative inner product with all the fundamental vectors. In particular, the elements of the positive cone are positive.
Moreover by definition any fundamental vector has nonnegative square. 

Note that the fundamental vectors are precisely those elements of $\overline{\mathcal{P}}_L$ intersecting all \emph{positive} $(-2)$-roots nonnegatively.
Moreover, for any $(-2)$-root $\vect{r}\in L$, either $\vect{r}$ or $-\vect{r}$ is positive. 

Finally, we will say that a positive $(-2)$-root $\vect{r}\in L$ is \emph{simple} if $\vect{r}-\vect{r'}$ is not positive for any positive $(-2)$-root $\vect{r'}\in L$ different from $\vect{r}$.
Note that $\vect{r}$ is simple if and only if $\vect{r}^\perp \cap \mathcal{D}_L$ is a facet, i.e. a codimension 1 face of $\mathcal{D}_L$. 
By definition, all positive $(-2)$-roots can be written as sums of simple $(-2)$-roots. 

We conclude the section with the following important result (note that even though it is stated only for Picard lattices of K3 surfaces, the proof is entirely lattice-theoretical):

\begin{proposition}[{\cite[Corollary~8.4.7]{huybrechts.K3}}] \label{prop:finitely.many.roots}
Let $L$ be a hyperbolic lattice containing at least one $(-2)$-root. Then the symmetry group $\Aut(\mathcal{D}_L)$ is finite if and only if $L$ contains finitely many simple $(-2)$-roots, i.e. if $\mathcal{D}_L$ is a finite polyhedral cone.
\end{proposition}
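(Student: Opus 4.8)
The plan is to prove the two implications separately, using throughout the identity $\Or^+(L)=W(L)\rtimes\Aut(\mathcal D_L)$: since $W(L)$ acts simply transitively on the chambers of $\mathcal P_L$, any $g\in\Or^+(L)$ carries $\mathcal D_L$ to some chamber $w\mathcal D_L$, whence $w^{-1}g\in\Aut(\mathcal D_L)$, and $W(L)\cap\Aut(\mathcal D_L)=\{1\}$ by simple transitivity. Consequently $\Aut(\mathcal D_L)$ is finite if and only if $W(L)$ has finite index in $\Or^+(L)$.

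First I would treat the implication ``$\Aut(\mathcal D_L)$ finite $\Rightarrow$ finitely many simple $(-2)$-roots''. If $[\Or^+(L):W(L)]<\infty$, then since $\Or^+(L)$ is an arithmetic subgroup of $\Or(1,\rk L-1)$ it is finitely generated, hence so is its finite-index subgroup $W(L)$. Now $W(L)$ is a Coxeter group whose standard generators are exactly the reflections in the simple $(-2)$-roots (this is the basic fact that the walls of $\mathcal D_L$ correspond to a set of Coxeter generators). In any Coxeter system, a subset of the standard generators that generates the whole group must be all of them, because a proper subset generates a proper standard parabolic subgroup; a finite generating set of $W(L)$ involves only finitely many of the standard generators, and so there can be only finitely many of them. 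Hence $L$ has finitely many simple $(-2)$-roots and $\mathcal D_L$ has finitely many walls.

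For the converse, assume $L$ has finitely many simple $(-2)$-roots $\vect{r}_1,\dots,\vect{r}_s$, so that $\mathcal D_L=\overline{\mathcal P}_L\cap\bigcap_i\{\vect{x}:\vect{x}.\vect{r}_i\ge 0\}$. I would first argue that $\mathcal D_L$ is then a rational polyhedral cone: it is pointed (being contained in the pointed cone $\overline{\mathcal P}_L$) and full-dimensional (it contains the interior of a chamber), and with finitely many walls the light cone cannot contribute a ``rounded'' boundary face. A pointed, full-dimensional rational polyhedral cone is the conical hull of its finitely many extremal rays, which are rational and therefore span $L\otimes\QQ$. Every $f\in\Aut(\mathcal D_L)$ permutes this finite set of rays, and an isometry of $L$ fixing a spanning set of $L\otimes\QQ$ is the identity; hence $\Aut(\mathcal D_L)$ embeds into a finite symmetric group and is finite.

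The delicate step — and the main obstacle — is the claim just made that finitely many walls already force $\mathcal D_L$ to be genuinely (rationally) polyhedral, equivalently that the permutation action of $\Aut(\mathcal D_L)$ on the walls has finite kernel. In pure convex geometry this can fail: intersecting the round cone $\overline{\mathcal P}_L$ with finitely many rational half-spaces may yield an unbounded region that rounds off along the light cone and has an infinite symmetry group (e.g.\ the region between two ultraparallel hyperplanes). For a Weyl chamber of an integral lattice this pathology does not occur, but ruling it out seems to need the arithmeticity of $\Or^+(L)$: since $\Or^+(L)$ is a lattice in $\Or(1,\rk L-1)$, a finite-sided fundamental chamber for its normal subgroup $W(L)$ must have finite hyperbolic volume, and a finite-volume convex polyhedron in hyperbolic space is finite-sided with finitely many ordinary and ideal vertices (the latter being cusps of the arithmetic group $W(L)$, hence rational) — so $\mathcal D_L$ is rational polyhedral. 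Granting this input, both implications are formal.
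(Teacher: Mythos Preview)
Your argument for the implication ``$\Aut(\mathcal D_L)$ finite $\Rightarrow$ finitely many simple roots'' is correct and rather elegant: finite index in the arithmetic group $\Or^+(L)$ forces $W(L)$ to be finitely generated, and then the Coxeter--theoretic fact that any generating set expressed as words in the standard generators already involves all of them finishes the job. This is genuinely different from the argument in Huybrechts' book (which the paper cites without reproducing): there one uses that $\Or^+(L)$ itself admits a rational polyhedral fundamental domain $\Pi$ (Siegel/Looijenga), so that $\mathcal D_L=\bigcup_{g\in\Aut(\mathcal D_L)}g\Pi$ is a finite union of rational polyhedra. Your route avoids invoking that geometric input and replaces it with a purely group--theoretic observation, which is a nice trade.

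The converse, however, has a genuine gap, and it is exactly at the point you yourself flag. Your proposed resolution --- ``$\Or^+(L)$ is a lattice, so a finite-sided fundamental chamber for its normal subgroup $W(L)$ must have finite hyperbolic volume'' --- is circular. The volume of $\mathcal D_L\cap\mathbb H_L$ equals $[\Or^+(L):W(L)]\cdot\mathrm{covol}(\Or^+(L))$, so finiteness of this volume is \emph{equivalent} to $\Aut(\mathcal D_L)$ being finite, which is what you are trying to prove. Normality of $W(L)$ and finite-sidedness of its chamber do not by themselves bound the index; a single reflection is normal in many lattices and has a one-sided fundamental half-space of infinite volume.

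A clean non-circular fix is to bypass polyhedrality altogether and show directly that the kernel $K$ of the permutation action of $\Aut(\mathcal D_L)$ on $\{\vect r_1,\dots,\vect r_s\}$ is trivial. Any $g\in K$ fixes each $\vect r_i$, hence commutes with each $s_{\vect r_i}$ and therefore with all of $W(L)$; since every $(-2)$-root lies in the $W(L)$-orbit of some simple root, $g$ fixes the full root sublattice $L_{\mathrm{root}}$. It then suffices to know that $L_{\mathrm{root}}\otimes\QQ=L\otimes\QQ$ whenever $L$ is hyperbolic of rank $\geq 3$ and contains a $(-2)$-root. This follows, for instance, from Borel density: $\Or^+(L)$ is Zariski dense in the real orthogonal group, which acts irreducibly on $L_\RR$, so the $\Or^+(L)$-orbit of any nonzero vector spans. (Rank $2$ is handled separately and easily.) Alternatively, one can follow Huybrechts and phrase the hypothesis as ``$\mathcal D_L$ is rational polyhedral'' rather than ``finitely many simple roots''; then the extremal rays span and the permutation argument goes through immediately --- but one must then separately justify the ``i.e.'' in the statement.
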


\section{Borcherds lattices} \label{sec:Borcherds.lattices}

In this section we introduce the main objects of the article, namely Borcherds lattices. After reviewing the definition of entropy of isometries of a hyperbolic lattice, we prove several equivalent characterizations of Borcherds lattices.

\subsection{Entropy on hyperbolic lattices}

Let $L$ be a hyperbolic lattice, and $g\in \Or^+(L)$ an isometry. The \emph{entropy} of $g$ is the nonnegative number $h(g)=\log{\lambda(g_\CC)}$, where $g_\CC$ is the natural extension of $g$ to $L\otimes \CC$ and $\lambda(g_\CC)$ is the spectral radius of $g_\CC$, i.e. the maximum norm of its eigenvalues.
Clearly isometries of finite order have zero entropy, since the eigenvalues of $g_\CC$ are roots of unity. The converse is not true, but we can characterize isometries of zero entropy by recalling the following classification of isometries of hyperbolic space (see \cite{ratcliffe} for more details).
If $g$ is an isometry of the hyperbolic space $\mathbb{H}^n$, we say that:
\begin{itemize}
    \item $g$ is \emph{elliptic} if $g$ preserves a point in $\mathbb{H}^n$;
    \item $g$ is \emph{parabolic} if it is not elliptic and it fixes a unique point in the boundary $\partial\mathbb{H}^n$;
    \item $g$ is \emph{hyperbolic} if it is not elliptic and it fixes two points in the boundary $\partial\mathbb{H}^n$.
\end{itemize}

Any isometry $g\in \Aut(\mathcal{D}_L)$ induces an isometry $g_\mathbb{H}$
of the hyperbolic space $\mathbb{H}_L$; hence we will say that $g$ is elliptic, parabolic or hyperbolic if $g_\mathbb{H}$ is so.

Elliptic isometries in $\Aut(\mathcal{D}_L)$ have finite order, since they are conjugate to rational orthogonal transformations of euclidean space \cite[Theorem~5.7.1]{ratcliffe}.
Parabolic isometries in $\Aut(\mathcal{D}_L)$ fix a unique point in the boundary of $\mathbb{H}_L$, hence they fix an isotropic ray in $\mathcal{D}_L$, which is generated by a primitive isotropic vector of $L$ by \cite[Remarque~1.1]{cantat.dynamics.K3.1}. 
A parabolic isometry $g$ is conjugate to a product $g_sg_u$, with $g_s$ elliptic and $g_u$ unipotent such that $g_s$ and $g_u$ commute \cite[Theorem~4.7.3]{ratcliffe}. In particular every eigenvalue of $g$ lies on the unit circle.
Since $g$ is defined over the rationals, Kronecker's theorem implies that each eigenvalue is a root of unity. 
On the other hand hyperbolic isometries in $\Aut(\mathcal{D}_L)$ fix two isotropic rays in $\mathcal{D}_L$, none of which is defined over $\QQ$ by \cite[Remarque~1.1]{cantat.dynamics.K3.1}. The eigenvalues of a hyperbolic isometry are $\{\lambda_1,\ldots,\lambda_r,\lambda,\lambda^{-1}\}$, where the $\lambda_i$ have absolute value $1$ and $\lambda$ is a \emph{Salem number} (cf. \cite[Discussion~before~Définition~1.2]{cantat.dynamics.K3.1}).

It is immediate to notice that elliptic and parabolic isometries have zero entropy, while hyperbolic isometries have positive entropy. From the previous discussion it follows immediately:

\begin{proposition} \label{prop:zero.entropy.isotropic}
An isometry $g\in \Aut(\mathcal{D}_L)$ has zero entropy if and only if either it has finite order, or if $g$ preserves a cusp of $\mathcal{D}_L$.
\end{proposition}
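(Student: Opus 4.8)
The plan is a short case analysis resting entirely on the trichotomy of isometries of $\mathbb{H}_L$ recalled just before the statement, so essentially no new input is needed.

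For the ``if'' direction I would argue as follows. If $g$ has finite order, then every eigenvalue of $g_\CC$ is a root of unity, hence $\lambda(g_\CC)=1$ and $h(g)=0$. Suppose instead $g$ preserves a cusp $\vect{e}\in L\cap\mathcal{D}_L$. Since $\vect{e}$ is primitive and $g\in\Or^+(L)$ preserves $\mathcal{P}_L$, one gets $g(\vect{e})=\vect{e}$ (the only other possibility $g(\vect{e})=-\vect{e}$ is excluded because $-\vect{e}\notin\overline{\mathcal{P}}_L$). Then $g_\mathbb{H}$ fixes the boundary point of $\mathbb{H}_L$ attached to $\vect{e}$, and this point is defined over $\QQ$; since the two boundary points fixed by a hyperbolic isometry are not defined over $\QQ$ by \cite[Remarque~1.1]{cantat.dynamics.K3.1}, $g$ must be elliptic or parabolic, and in both cases $h(g)=0$ by the preceding discussion. (Equivalently, one may avoid the trichotomy here: $g(\vect{e})=\vect{e}$ forces $g$ to act trivially on the rank-one quotient $L/\vect{e}^\perp$, because $\vect{x}\mapsto\vect{x}.\vect{e}$ is $g$-invariant, and with finite order on the negative definite lattice $\vect{e}^\perp/\ZZ\vect{e}$, so all eigenvalues of $g_\CC$ are roots of unity.)

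For the ``only if'' direction, assume $h(g)=0$ and apply the trichotomy to $g_\mathbb{H}$. It cannot be hyperbolic, since a hyperbolic isometry has a Salem number $\lambda>1$ among its eigenvalues, forcing $h(g)=\log\lambda>0$. If $g_\mathbb{H}$ is elliptic, then $g$ has finite order by the discussion above. If $g_\mathbb{H}$ is parabolic, then, again as recalled right before the proposition, it fixes an isotropic ray in $\mathcal{D}_L$ spanned by a primitive isotropic vector $\vect{e}\in L$, which is by definition a cusp of $\mathcal{D}_L$ preserved by $g$. This exhausts the cases.

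There is no serious obstacle: all the ingredients sit in the two paragraphs preceding the statement. The only point that needs a moment's care is the implication ``$g$ preserves a cusp $\Rightarrow$ $h(g)=0$'', where one has to use the rationality of the cusp (or the explicit eigenvalue computation) to rule out the hyperbolic case, together with the observation that $g\in\Or^+(L)$ forces $g(\vect{e})=\vect{e}$ rather than merely fixing the line $\RR\vect{e}$ up to sign.
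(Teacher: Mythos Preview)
Your proof is correct and takes essentially the same approach as the paper, which regards the proposition as an immediate consequence of the preceding trichotomy discussion. You have simply spelled out the case analysis (elliptic/parabolic/hyperbolic) that the paper leaves implicit, and your alternative filtration argument for the ``if'' direction is a harmless extra.
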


We will denote by $\Stabe$ the subgroup of $\Aut(\mathcal{D}_L)$ preserving the cusp $\vect{e}$.
By the Shioda-Tate formula, on K3 surfaces the size of the stabilizer of a nef isotropic vector $\vect{e}$ (which corresponds to the rank of the Mordell-Weil group of the Jacobian fibration of $|\vect{e}|$) depends on the rank of the root part $R=(\vect{e}^\perp /\langle \vect{e}\rangle)_{root}$ of $\vect{e}^\perp /\langle \vect{e}\rangle$. The following result, known to the experts, shows that the same happens on general hyperbolic lattices. For lack of a reference we give a proof. Recall that a group is \emph{virtually abelian} if it contains an abelian subgroup of finite index. 

\begin{proposition} \label{prop:generalization.Shioda.Tate}
Let be a cusp of $\mathcal{D}_L$ and $\Stabe$ its stabilizer. The group $\Stabe$ is virtually abelian, and more precisely it contains a normal subgroup $G$ of finite index isomorphic to $\ZZ^m$, where $m=\rk(L)-2-\rk(R)$ and $R$ is the root part of $\vect{e}^\perp /\langle \vect{e}\rangle$.

In particular $\Stabe$ is finite if and only if $\vect{e}^\perp / \vect{e}$ is a root overlattice.
\end{proposition}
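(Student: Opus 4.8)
The plan is to realize $\Stabe$ as a group of isometries of the Euclidean lattice $R^\perp$ inside $N := \vect{e}^\perp/\langle \vect{e}\rangle$, and then invoke the classical structure of such groups. First I would record the standard filtration attached to an isotropic vector: since $\vect{e}$ is primitive, $\vect{e}^\perp$ contains $\langle \vect{e}\rangle$ as a primitive sublattice, and $N = \vect{e}^\perp/\langle \vect{e}\rangle$ is a negative definite lattice of rank $\rk(L)-2$. An isometry $f \in \Stabe$ fixes $\vect{e}$, hence preserves $\vect{e}^\perp$ and descends to an isometry $\bar f$ of $N$; this gives a homomorphism $\rho\colon \Stabe \to \Or(N)$. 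I would first show $\ker\rho$ is free abelian: an element of the kernel acts trivially on $N$ and fixes $\vect{e}$, so in a basis adapted to the flag $\langle\vect{e}\rangle \subset \vect{e}^\perp \subset L$ it is unipotent, in fact of the form $\vect{v}\mapsto \vect{v} + (\vect{v}.\vect{e})\vect{w} - \tfrac12(\vect{w}.\vect{w})(\vect{v}.\vect{e})\vect{e} - (\vect{v}.\vect{w})\vect{e}$ for some $\vect{w}$ in (a lift of) $N$ — these are the Eichler transvections, and they form a group isomorphic to $N$ modulo the subgroup forced by integrality; in any case $\ker\rho$ is a finitely generated torsion-free abelian group, hence $\cong \ZZ^k$ for some $k \le \rk(L)-2$.

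Next I would analyze the image $\rho(\Stabe) \subseteq \Or(N)$. Because $\Or(N)$ is finite (as $N$ is definite), $\rho(\Stabe)$ is finite, so $\Stabe$ is virtually abelian and in fact virtually $\ZZ^k$; it remains to identify $k$ with $m = \rk(L)-2-\rk(R)$. The point is that a transvection by $\vect{w}$ lies in $\Or^+(L)$ and preserves $\mathcal{D}_L$ (i.e. defines an element of $\Stabe$, not merely of $\Or^+(L)$) precisely when it does not move any cusp-adjacent chamber wall off $\mathcal{D}_L$; concretely, Eichler transvections by roots of $N$ are products of reflections and hence act trivially on $\mathcal{D}_L$, whereas transvections by a vector whose image in $N/R$ is nontrivial genuinely move the chamber. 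Thus the subgroup of transvections landing in $\Stabe$ is, up to finite index, $R^\perp \subseteq N$, which has rank $\rk(N)-\rk(R) = \rk(L)-2-\rk(R) = m$. I would make this precise by showing the composite $R^\perp \hookrightarrow N \to (\ker\rho \text{ via transvections})$ has finite kernel and finite cokernel inside $\ker\rho$, giving $k = m$. Finally, replacing $\ker\rho$ by a characteristic finite-index subgroup (e.g. the subgroup acting trivially on the discriminant form of $L$, or simply passing to the centralizer) yields a normal $G \cong \ZZ^m$ of finite index in $\Stabe$; normality follows since $\ker\rho$ is already normal and any characteristic subgroup of it stays normal in $\Stabe$.

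The last assertion is then immediate: $\Stabe$ is finite iff $m = 0$ iff $\rk(R) = \rk(N)$, i.e. iff $N = \vect{e}^\perp/\langle\vect{e}\rangle$ is spanned by its roots up to finite index — which is exactly the condition that $N$ be a root overlattice (recall $\rk(W_{root}) = \rk(W)$ characterizes root overlattices, as noted in the preliminaries).

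I expect the main obstacle to be the bookkeeping that pins down the rank $k$ of the transvection group $\ker\rho$ exactly as $m$, rather than just bounding it: one must check that \emph{all} transvections by vectors of $R^\perp$ (up to finite index) genuinely preserve the chamber $\mathcal{D}_L$ — equivalently, that they fix every simple $(-2)$-root adjacent to $\vect{e}$ up to the Weyl action — and conversely that no transvection with a component along $R$ survives in $\Stabe$. Identifying which integral multiples of a given $\vect{w} \in R^\perp$ lift to honest integral Eichler transvections on $L$ (as opposed to $N$) is the routine-but-delicate computation; fortunately only the rank matters for the statement, so finite-index ambiguities are harmless.
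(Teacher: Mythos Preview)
Your overall architecture matches the paper's exactly: define the reduction map $\rho\colon \Stabe \to \Or(N)$ with $N=\vect{e}^\perp/\langle\vect{e}\rangle$, observe that $\Or(N)$ is finite, identify $G=\ker\rho$ as a free abelian group via the homomorphism $g\mapsto \alpha_g$ where $g(\vect{w})=\vect{w}+\alpha_g(\vect{w})\vect{e}$, and then pin down its rank using Eichler transvections. The paper carries out precisely this plan.

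The gap is in the step you flag yourself, and your sketch there is actually confused. You write that transvections by roots of $N$ lie in $W(L)$ (true: $\psi_{\vect{y}}=s_{\vect{y}}s_{\vect{y}+\vect{e}}$ when $\vect{y}^2=-2$), and that transvections by vectors with nontrivial image in $N/R$ ``genuinely move the chamber''. But vectors in $R^\perp$ have nontrivial image in $N/R$, so as written your two claims together would force the transvection subgroup of $\Stabe$ to be trivial --- the opposite of what you conclude. The correct dichotomy is with respect to $R^\perp$, not $N/R$: the paper shows directly (i) any $g\in G$ must fix every simple $(-2)$-root $\vect{r}$ orthogonal to $\vect{e}$ (since $g(\vect{r})=\vect{r}+\beta\vect{e}$ stays simple only if $\beta=0$), giving $\varphi(G)\subseteq R^\perp\otimes\QQ$; and (ii) for $\vect{y}$ with class in $R^\perp$, the Eichler transvection $\psi_{\vect{y}}$ preserves the set of positive roots (it fixes the simple roots orthogonal to $\vect{e}$ because $\vect{y}\perp R$, and preserves positivity of the others because $\psi_{\vect{y}}(\vect{r}).\vect{e}=\vect{r}.\vect{e}>0$), giving $R^\perp\subseteq\varphi(G)$. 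Neither direction uses that transvections by roots are in $W(L)$; that observation is a red herring. Once you replace your $N/R$ heuristic with the argument in (i), your sketch becomes the paper's proof.
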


\begin{proof}
Let $W \coloneqq \vect{e}^\perp / \langle\vect{e}\rangle$ and denote the stabilizer of $\vect{e}$ in $\Or(L)$ by $\Or(L,\vect{e})$. Let $\pi_{\vect{e}}:\Or(L,\vect{e})\rightarrow \Or(W)$ be the natural homomorphism.
We set $H \coloneqq \Ker(\pi_{\vect{e}})$ and $G = H \cap \Stabe$. Clearly $G$ is a normal subgroup of $\Stabe$ of finite index, since $W$ is negative definite and thus $\Or(W)$ is finite.

For any $h\in H$, we have by construction that $h(\vect{e})=\vect{e}$ and $h(\vect{w})=\vect{w}+\alpha_h(\vect{w})\vect{e}$ for every $\vect{w}\in \vect{e}^\perp$, where $\alpha_h(\vect{w}) \in \ZZ$. Note that for $k \in \ZZ$ we have $\alpha(\vect{w} +k \vect{e})=\alpha(\vect{e})$.
Thus we obtain a homomorphism 
\[\varphi\colon H \to \Hom(W,\ZZ)\cong W^\vee, \quad  h \mapsto \alpha_h,\]
where we identify $\Hom(W,\ZZ)$ and $W^\vee$ via the canonical isomorphism induced by the bilinear form. 

We prove that $\varphi$ is injective.
Let $h \in \Ker(\varphi)$ and choose $\vect{x}\in L$ with $\vect{x}.\vect{e}\neq 0$. After replacing $\vect{x}$ by $2(\vect{x}.\vect{e}) \vect{x} - \vect{x}^2 \vect{e}$ we can assume $\vect{x}^2=0$. Write $h(\vect{x}) = a \vect{x}+b\vect{e}+\vect{v}$ with $v \in V:=\langle \vect{x},\vect{e}\rangle^\perp$.
Since $h \in \Ker(\varphi)$, we have $h|_{\vect{e}^\perp}=\id$. The fact that $h$ is an isometry implies that $0 = \vect{x}.\vect{v}' = h(\vect{x}).\vect{v}'=\vect{v}.\vect{v}'$ for any $\vect{v}' \in V$. Thus $\vect{v}=0$. We have $\vect{x}.\vect{e}=h(\vect{x}).\vect{e} = a \vect{x}.\vect{e}$ and therefore $a=1$. 
Further $0=\vect{x}^2=h(\vect{x})^2=2b \vect{x}.\vect{e}$, implies $b=0$ and in turn $h=\id$. 
Let $R$ be the root part of $W$ and $R^\perp \subseteq W$ its orthogonal complement.
The proposition is proven if we can show that \[R^\perp \subseteq \varphi(G)\subseteq R^\perp\otimes \QQ.\]

First we show that $\varphi(G)\subseteq R^\perp\otimes \QQ$, i.e. any $g \in G$ acts trivially on $R$.
Notice that it is sufficient to show that $G$ fixes all the simple $(-2)$-roots of $L$ orthogonal to $\vect{e}$, because any root in $R$ is represented by a linear combination of simple $(-2)$-roots orthogonal to $\vect{e}$.
If $\vect{r}$ is a simple $(-2)$-root orthogonal to $\vect{e}$ and $g\in G$, then by construction of $G$ we have that $g(\vect{r})=\vect{r}+\beta \vect{e}$ for some $\beta \in \ZZ$. Since $g(\vect{r})$ is a simple $(-2)$-root as well,  $g(\vect{r})-\vect{r}=\beta \vect{e}$ cannot be positive, unless $\vect{r}=g(\vect{r})$. Therefore $\beta \leq 0$.
However, if $\beta < 0$, then $\vect{r}+\beta \vect{e}$ is not positive, since it intersects negatively any fundamental vector in $\vect{r}^\perp$ different from $\vect{e}$ (recall that $\vect{r}^\perp \cap \mathcal{D}_L$ is a facet). We conclude that $g$ fixes $\vect{r}$.
Therefore the image $\varphi(G)$ is contained in $R^\perp \otimes \QQ$.

Now we show $R^\perp\subseteq \varphi(G)$.
 For $\vect{y} \in \vect{e}^\perp$ with $\vect{y}+\ZZ \vect{e} \in R^\perp$, the Eichler-Siegel transformation $\psi_{\vect{y}} \in \Or(L)$ is defined by
 \[\psi_{\vect{y}}(\vect{x})=\vect{x}+(\vect{x}.\vect{y})\vect{e} - (\vect{x}.\vect{e})\vect{y}-\frac{1}{2}(\vect{x}.\vect{e})\vect{y}^2 \vect{e}.\]
 Clearly $\psi_{\vect{y}}$ acts trivially on $\langle \vect{e},\vect{y}\rangle^\perp$ and $\psi_{\vect{y}}(\vect{y})=\vect{y}+\vect{y}^2\vect{e}$. Thus $\psi_{\vect{y}}$ descends to the identity of $W$, i.e. $\psi_{\vect{y}} \in H$. 
 In order to show that $\psi_{\vect{y}} \in \Aut(\mathcal{D}_L,\vect{e})$, we prove that $\psi_{\vect{y}}$ preserves the set of positive roots of $L$.
 The isometry $\psi_{\vect{y}}$ acts trivially on $R$ and therefore preserves the simple roots perpendicular to $\vect{e}$. So let $\vect{r}\in L$ be a positive $(-2)$-root with $\vect{e}.\vect{r}>0$. The image $\psi_{\vect{y}}(\vect{r})$ is a $(-2)$-root, so it is either positive or $-\psi_{\vect{y}}(\vect{r})$ is positive. However $$\psi_{\vect{y}}(\vect{r}).\vect{e} = \psi_{\vect{y}}(\vect{r}).\psi_{\vect{y}}(\vect{e}) = \vect{r}.\vect{e}>0,$$
hence $\psi_{\vect{y}}(\vect{r})$ is positive.
We have shown that $\psi_{\vect{y}} \in G$. Moreover $\varphi(\psi_{\vect{y}})=\vect{y}+\ZZ\vect{e}$, hence $R^\perp \subseteq \varphi(G)$.
\end{proof}

For a subgroup $G$ of $\Aut(\mathcal{D}_L)$, we say that $G$ has \emph{zero entropy} if all the elements of $G$ have zero entropy, otherwise we say that $G$ has \emph{positive entropy}.

\begin{definition}
A hyperbolic lattice $L$ has \emph{zero entropy} if its symmetry group $\Aut(\mathcal{D}_L)$ has zero entropy. Otherwise we say that $L$ has \emph{positive entropy}.
\end{definition}

\begin{proposition} \label{prop:entropy.overlattice}
Every overlattice of a hyperbolic lattice of zero entropy has zero entropy as well.
\end{proposition}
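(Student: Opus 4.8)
The plan is to reduce a symmetry of the overlattice to a symmetry of the original lattice by passing to a power. Concretely, given $f\in\Aut(\mathcal{D}_M)$ I want to produce $n\ge 1$ with $f^n\in\Aut(\mathcal{D}_L)$; then $h(f^n)=0$ because $L$ has zero entropy, and since the spectral radius of an isometry of a hyperbolic lattice is always $\ge 1$ and satisfies $\lambda(f^n)=\lambda(f)^n$, this forces $\lambda(f)=1$, i.e.\ $h(f)=0$. As $f$ is arbitrary, $M$ then has zero entropy.

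First I would compare the two fundamental domains. Every $(-2)$-root of $L$ is a $(-2)$-root of $M$, so the mirror arrangement of $L$ is contained in that of $M$, and hence every chamber of $M$ is contained in a single chamber of $L$. Since whether a hyperbolic lattice has zero entropy does not depend on the choice of fundamental domain — two choices differ by an element of $W(L)$, which conjugates the corresponding symmetry groups — I may assume the fixed domains satisfy $\mathcal{D}_M\subseteq\mathcal{D}_L$, with the interior of $\mathcal{D}_M$ meeting no mirror of $L$ and thus lying inside one chamber of $L$.

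Next I would locate the power landing in $\Or^+(L)$. Writing $d=[M:L]$ one has $dM\subseteq L\subseteq M$, so $\Or^+(M)$ acts on the finite group $M/dM$ and the stabilizer $H$ of the subgroup $L/dM$ has finite index; by restriction $H$ is identified with $\Or^+(M)\cap\Or^+(L)$ inside $\Or^+(M\otimes\QQ)=\Or^+(L\otimes\QQ)$. For $f\in\Aut(\mathcal{D}_M)$ the powers $f^k$ meet only finitely many cosets of $H$, so $f^n\in H$ for some $n\ge 1$; thus $f^n$ preserves $L$ and the positive cone, hence $f^n\in\Or^+(L)$.

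The remaining — and I expect slightly delicate — point is that $f^n$ preserves $\mathcal{D}_L$ itself, not merely a $W(L)$-translate of it. Since $f$ fixes $\mathcal{D}_M$, so does $f^n$; therefore $f^n$ sends the chamber closure $\mathcal{D}_L$ to some chamber closure of $L$ which, like $\mathcal{D}_L$, contains the full-dimensional set $\mathcal{D}_M$. As distinct chambers of $L$ have disjoint nonempty interiors, $\mathcal{D}_M$ lies in a unique chamber closure of $L$, namely $\mathcal{D}_L$, so $f^n(\mathcal{D}_L)=\mathcal{D}_L$ and $f^n\in\Aut(\mathcal{D}_L)$. Combined with the first paragraph this finishes the proof. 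The only genuine obstacle is this last bookkeeping step: it is exactly where one uses that the root system of $L$ is contained in that of $M$, so that the chamber $\mathcal{D}_M$ is small enough to single out one chamber of $L$ — and indeed the analogous statement for finite-index sublattices fails for precisely this reason.
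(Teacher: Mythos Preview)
Your proof is correct and follows essentially the same approach as the paper: choose $\mathcal{D}_M\subseteq\mathcal{D}_L$, pass to a power of $f$ that preserves $L$, and deduce zero entropy. The paper's version is terser and simply asserts that $g^n\in\Aut(\mathcal{D}_L)$ once $g^n$ preserves $L$; your ``slightly delicate'' paragraph makes explicit the step the paper leaves implicit, namely that $f^n(\mathcal{D}_L)$ must coincide with $\mathcal{D}_L$ because both chamber closures contain the full-dimensional $\mathcal{D}_M$.
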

\begin{proof}
Let $M$ be any overlattice of $L$.
Without loss of generality we can assume that the fundamental domain $\mathcal{D}_M$ of $M$ is contained in the fundamental domain $\mathcal{D}_L$ of $L$. Let $g \in \Aut(\mathcal{D}_M)$. Since $M/L$ is finite, there is $n\ge 1$ such that $g^n$ preserves $L$. Therefore $g^n \in \Aut(\mathcal{D}_L)$. By assumption $g^n$ has zero entropy, and therefore $g$ has zero entropy as well.
\end{proof}

\subsection{Borcherds and Leech type lattices}

In the following $L$ will denote a hyperbolic isotropic lattice, and $\mathcal{D}_L$ a fixed fundamental domain for the Weyl group. 
Recall that a cusp of $\mathcal{D}_L$ is a primitive isotropic vector $\vect{e} \in L\cap \mathcal{D}_L$.

\begin{definition} \label{definition:Borcherds.lattice}
An even hyperbolic lattice $L$ with infinitely many simple $(-2)$-roots is a \emph{Borcherds lattice} if there exists a cusp $\vect{e}$ of the fundamental domain $\mathcal{D}_L$  having bounded inner product with all the simple $(-2)$-roots of $L$.
\end{definition}

We concentrate on even lattices and $(-2)$-roots in view of our geometric applications.
The core of Definition \ref{definition:Borcherds.lattice} is the existence of an isotropic vector with bounded inner product with all the simple $(-2)$-roots. 
Note that in order for this condition to be non-empty, we need to ask for the existence of a $(-2)$-root in $L$; otherwise all lattices of the form $L(n)$ with $n\ge 2$ would be Borcherds lattices, since they do not contain any $(-2)$-root.
Therefore it is not really restrictive to ask that $L$ contains infinitely many simple $(-2)$-roots, since if there were a simple $(-2)$-root but only finitely many, the symmetry group of $L$ would be finite by Proposition \ref{prop:finitely.many.roots}, and Nikulin \cite{nikulin.finite.aut.3, nikulin.finite.aut.greater.5} and Vinberg \cite{vinberg.finite.aut.4} already classified hyperbolic lattices with finite symmetry group. In particular, the symmetry group of Borcherds lattices is always infinite.

Already Borcherds in \cite{borcherds.leech} noticed that the unimodular lattice $\mathrm{II}_{1,25}=U\oplus E_8^3$ is a Borcherds lattice. Indeed there exists an isometry $\mathrm{II}_{1,25}\cong U\oplus \Lambda$, where $\Lambda$ is the \emph{Leech lattice}, i.e. the unique negative definite unimodular lattice of rank $24$ with no $(-2)$-root. It can be shown that the cusp $\vect{e}\in \mathrm{II}_{1,25}$ such that $\vect{e}^\perp/\langle \vect{e}\rangle \cong \Lambda$, has inner product $1$ with every simple $(-2)$-root of $\mathrm{II}_{1,25}$ (for instance, this can be obtained from Proposition \ref{prop:covering.radius}, recalling that the Leech lattice has covering radius $\sqrt{2}$ \cite[{\S}23,~Theorem~1]{conway.sloane.lattices}). This observation motivates the following definition:

\begin{definition}
A negative definite lattice $W$ is a \emph{Leech type lattice} if $U\oplus W$ is a Borcherds lattice.
\end{definition}

Before passing to the classification of Borcherds lattices, we want to present several equivalent characterizations of Borcherds lattices, tying together many important concepts.
Before stating the result, we need to recall the definition of the \emph{exceptional lattice}, as introduced by Nikulin \cite[{\S}4]{nikulin.elliptic}. If $L$ is a hyperbolic lattice, denote by $E(L)$ the subset of $L$ containing the vectors whose stabilizer in $\Aut(\mathcal{D}_L)$ has finite index. Clearly $E(L)$ is a sublattice of $L$, and it is called the \emph{exceptional lattice} of $L$.

The equivalence of conditions (b--e) in the following theorem can essentially be traced back to Nikulin \cite[Theorem~9.1.1]{nikulin.factor.groups} and \cite[{\S}4]{nikulin.elliptic}. It was picked up by Cantat \cite{cantat.dynamics.K3.1, cantat.dynamics.K3.2}, and it was explicitly stated by Oguiso in \cite[Theorems~1.6~and~2.1]{oguiso.entropy}. We propose an alternative proof that uses hyperbolic geometry.

\begin{theorem} \label{thm:equivalence.characterizations.Borcherds}
Let $L$ be a hyperbolic lattice with infinite symmetry group $\Aut(\mathcal{D}_L)$. The following are equivalent:
\begin{enumerate}[(a)]
    \item $L$ is a Borcherds lattice;
    \item $L$ has zero entropy;
    \item There is a unique cusp $\vect{e}$ of $\mathcal{D}_L$ with infinite stabilizer $\Stabe$;
    \item There is a cusp $\vect{e}$ of $\mathcal{D}_L$ such that $\Aut(\mathcal{D}_L)=\Stabe$;
    \item There is a cusp $\vect{e}$ of $\mathcal{D}_L$ such that $\Stabe$ has finite index in $\Aut(\mathcal{D}_L)$;
    \item The exceptional lattice $E(L)$ is parabolic, i.e. it is negative semidefinite with a $1$-dimensional kernel.
\end{enumerate}
\end{theorem}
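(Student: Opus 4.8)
I would prove the chain of implications
\[(a)\Rightarrow(b)\Rightarrow(c)\Rightarrow(d)\Rightarrow(e)\Rightarrow(f)\Rightarrow(a),\]
using hyperbolic geometry to connect the "bounded inner product" condition with the dynamics of $\Aut(\mathcal{D}_L)$ on $\overline{\mathbb{H}}_L$. The conceptual heart is the dictionary: a cusp $\vect{e}$ has bounded inner product with all simple $(-2)$-roots if and only if the facets of $\mathcal{D}_L$ accumulate only at the boundary point $[\vect{e}]$, i.e. $\mathcal{D}_L$ "looks finite-sided away from $\vect{e}$". I expect the main obstacle to be making this dictionary precise — in particular, showing that a symmetry of infinite order, being parabolic or hyperbolic, moves the accumulation structure of facets in a way that forces all accumulation at a single cusp (for $(a)\Rightarrow(c)$), and conversely producing the bounded cusp from a finiteness statement (for $(f)\Rightarrow(a)$). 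The quantitative input relating the horoball centered at $\vect{e}$ and the inner products $\vect{e}.\vect{r}$ is Proposition~\ref{prop:covering.radius} (the covering-radius bound), which I would invoke rather than reprove.

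\emph{First,} $(a)\Rightarrow(b)$: if $\vect{e}$ is a bounded cusp and $g\in\Aut(\mathcal{D}_L)$, then $g$ permutes the simple $(-2)$-roots and fixes $\mathcal{D}_L$; the finitely many simple $(-2)$-roots $\vect{r}$ with $\vect{e}.\vect{r}$ equal to each fixed value are permuted among themselves, so some power $g^N$ fixes a cusp (either $\vect{e}$ or one of these roots' associated data), whence by Proposition~\ref{prop:zero.entropy.isotropic} $g^N$ — and thus $g$ — has zero entropy. Actually the cleanest argument: a hyperbolic isometry has a repelling/attracting pair of irrational isotropic rays and dilates the horoball coordinate, so it would force the set of values $\{\vect{e}.\vect{r}\}$ to be unbounded (the facets get pushed toward one Salem ray, and Proposition~\ref{prop:covering.radius} translates proximity of facets to the boundary into large inner products); this contradicts boundedness. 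Then $(b)\Rightarrow(c)$: by Proposition~\ref{prop:zero.entropy.isotropic} every infinite-order $g$ is parabolic and fixes a cusp; since $\Aut(\mathcal{D}_L)$ is infinite it contains such a $g$, so at least one cusp has infinite stabilizer. If two distinct cusps $\vect{e},\vect{e}'$ had infinite stabilizers, I would pick parabolics $g,g'$ fixing them respectively and argue — using that the only isometries fixing both boundary points of a geodesic and being parabolic at neither are trivial, combined with a ping-pong/discreteness argument on $\overline{\mathbb{H}}_L$ — that $\langle g,g'\rangle$ contains a hyperbolic element, contradicting $(b)$; uniqueness follows.

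\emph{Next,} $(c)\Rightarrow(d)$: let $\vect{e}$ be the unique cusp with infinite stabilizer. Any $f\in\Aut(\mathcal{D}_L)$ sends $\vect{e}$ to another cusp $f(\vect{e})$ whose stabilizer $f\,\Stabe\,f^{-1}$ is again infinite, so by uniqueness $f(\vect{e})=\vect{e}$, i.e. $f\in\Stabe$; thus $\Aut(\mathcal{D}_L)=\Stabe$. Then $(d)\Rightarrow(e)$ is trivial, and $(e)\Rightarrow(f)$: if $\Stabe$ has finite index, then by Proposition~\ref{prop:generalization.Shioda.Tate} it contains a finite-index $\ZZ^m$, hence so does $\Aut(\mathcal{D}_L)$; the vectors fixed by a finite-index subgroup are exactly $E(L)$ by definition, and unwinding Proposition~\ref{prop:generalization.Shioda.Tate} shows $E(L)=\langle\vect{e}\rangle\oplus R^\perp$-type data sitting inside $\vect{e}^\perp$ with radical $\langle\vect{e}\rangle$, so $E(L)$ is negative semidefinite with a $1$-dimensional kernel, i.e. parabolic. (One must check $E(L)\neq\langle\vect{e}\rangle$ alone, which holds because $\Aut(\mathcal{D}_L)$ being infinite forces $m\geq 1$ by Proposition~\ref{prop:generalization.Shioda.Tate}, so $R^\perp$ contributes.)

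\emph{Finally,} $(f)\Rightarrow(a)$: if $E(L)$ is parabolic, its radical is spanned by a primitive isotropic $\vect{e}$, which (after replacing by $-\vect{e}$ if needed) lies in $\overline{\mathcal{P}}_L$; one checks $\vect{e}\in\mathcal{D}_L$ because $E(L)$ is preserved by a finite-index subgroup that must fix $\vect{e}$, and a standard averaging/fundamental-domain argument places $\vect{e}$ in $\mathcal{D}_L$, so $\vect{e}$ is a cusp. The finite-index subgroup $G\cong\ZZ^m$ (via Proposition~\ref{prop:generalization.Shioda.Tate}) acts on the simple $(-2)$-roots with finitely many orbits, because $G$ acts by Eichler–Siegel-type transvections whose orbits on $(-2)$-roots of a fixed value $k=\vect{e}.\vect{r}$ are parametrized by a lattice coset and the roots with $\vect{e}.\vect{r}=0$ form the finite root system $R$; bounding the $k$'s is where I would again invoke Proposition~\ref{prop:covering.radius} — the covering radius of the definite lattice $R^\perp\subseteq W=\vect{e}^\perp/\langle\vect{e}\rangle$ (which is finite, as $R^\perp$ is definite) gives an explicit bound $\vect{e}.\vect{r}\leq 1+\tfrac12\mu(R^\perp)^2$ or similar, so only finitely many values of $k$ occur. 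Hence $\vect{e}$ is a bounded cusp and $L$ is a Borcherds lattice. The step I anticipate being most delicate is this last bounding of inner products: controlling how a simple root decomposes relative to $\vect{e}^\perp$ and ensuring the covering-radius estimate of $R^\perp$ genuinely caps $\vect{e}.\vect{r}$, since a priori $\vect{r}$ need not project into $R^\perp$ but only into $W\otimes\QQ$, so one has to handle the $R$-component carefully using simplicity of $\vect{r}$.
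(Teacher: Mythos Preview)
Your cycle $(a)\Rightarrow(b)\Rightarrow(c)\Rightarrow(d)\Rightarrow(e)\Rightarrow(f)\Rightarrow(a)$ differs from the paper's $(a)\Rightarrow(d)\Rightarrow(e)\Rightarrow(f)\Rightarrow(b)\Rightarrow(c)\Rightarrow(a)$, and your middle steps $(b)\Rightarrow(c)$, $(c)\Rightarrow(d)$, $(d)\Rightarrow(e)$ are essentially the paper's arguments. But two of your implications rest on a misreading of Proposition~\ref{prop:covering.radius}, and the last one has a genuine gap.

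\textbf{The gap in $(f)\Rightarrow(a)$.} You propose to bound $k=\vect{e}.\vect{r}$ for simple roots $\vect{r}$ via the covering radius of $R^\perp\subseteq W=\vect{e}^\perp/\langle\vect{e}\rangle$, citing Proposition~\ref{prop:covering.radius}. But that proposition is only a \emph{sufficient} condition: it shows that if $L\cong U\oplus W$ and the covering radius $\rho$ of $W(-1)$ satisfies $\rho\le\sqrt{2}$, then every simple root has $\vect{e}.\vect{r}\le 1$. Its proof needs the inequality $\rho^2/2<1+1/y^2$, which for $\rho>\sqrt{2}$ fails for all large $y$ and gives no bound on $k$ whatsoever. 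Table~\ref{tab:root.overlattices.Leech.type} exhibits many Leech type lattices with $\rho^2\in\{5/2,\,11/4,\,17/6\}$, so the covering-radius route cannot close the implication in general; and $L$ need not even split off a copy of $U$. The paper instead reaches $(a)$ through $(c)$: once $\Aut(\mathcal{D}_L)$ is known to fix $\vect{e}$, one uses the standard fact (from the finite covolume of $\Or^+(L)$) that there are only finitely many $\Aut(\mathcal{D}_L)$-orbits of simple roots, and sets $N=\max_i\vect{e}.\vect{r_i}$ over orbit representatives. This yields the bound in one line, with no covering-radius input.

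\textbf{The argument for $(a)\Rightarrow(b)$.} Your first attempt (``finitely many simple roots with each fixed value of $\vect{e}.\vect{r}$ are permuted among themselves'') is false: in $\mathrm{II}_{1,25}$ every simple root has $\vect{e}.\vect{r}=1$ and there are infinitely many. Your second attempt (a hyperbolic element dilates horoball coordinates, forcing unboundedness) can be made to work, but as written it again invokes Proposition~\ref{prop:covering.radius} in a role it does not play. The paper bypasses this entirely by proving $(a)\Rightarrow(d)$ directly: if some $g\in\Aut(\mathcal{D}_L)$ had $g(\vect{e})\ne\vect{e}$, then $\vect{h}=\vect{e}+g(\vect{e})$ would be a fundamental vector of \emph{positive} square satisfying $\vect{h}.\vect{r}\le 2N$ for every simple root, forcing finitely many simple roots---a contradiction. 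This two-line argument gives $(a)\Rightarrow(d)\Rightarrow(b)$ without any dynamical estimate.

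A minor point on $(e)\Rightarrow(f)$: your parenthetical worry that $E(L)$ might equal $\langle\vect{e}\rangle$ is unnecessary, since a rank-one isotropic lattice is already negative semidefinite with $1$-dimensional kernel, hence parabolic. The paper's argument here is simpler than yours: it just observes $\vect{e}\in E(L)$, so $E(L)$ is hyperbolic or parabolic, and rules out hyperbolic because a vector of positive square in $E(L)$ would have a finite stabilizer of finite index, making $\Aut(\mathcal{D}_L)$ finite.
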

\begin{proof}
    (a) $\Rightarrow$ (d): By assumption there is a cusp $\vect{e}$ of $\mathcal{D}_L$ such that $\vect{e}.\vect{r} \le N$ for any simple $(-2)$-root $\vect{r}\in L$. Let $g\in \Aut(\mathcal{D}_L)$ and assume by contradiction that $g(\vect{e})\ne \vect{e}$. Since $g(\vect{e})$ is fundamental, $\vect{h}=\vect{e}+g(\vect{e})$ has positive square.
    We have that $g(\vect{e}).\vect{r}=\vect{e}.g^{-1}(\vect{r}) \le N$ for any simple $(-2)$-root $r$, since $g^{-1}(\vect{r})$ is a simple $(-2)$-root as well. Therefore $\vect{h}.\vect{r}\le 2N$ for any simple $(-2)$-root $\vect{r}\in L$, and hence there are only finitely many simple $(-2)$-roots in $L$. This contradicts the definition of a Borcherds lattice.
    
    (d) $\Rightarrow$ (e): Obvious.
    
    (e) $\Rightarrow$ (f): The cusp $\vect{e}\in L$ belongs to the exceptional lattice, hence $E(L)$ is either hyperbolic or parabolic. Assume by contradiction that it is hyperbolic. Then $E(L)$ contains a vector $\vect{v}$ of positive norm, and by definition $\vect{v}$ is fixed by a subgroup $G$ of $\Aut(\mathcal{D}_L)$ of finite index. Since $\vect{v}$ has positive norm, $G$ is necessarily finite, and therefore $\Aut(\mathcal{D}_L)$ is finite as well, contradicting the initial assumption.
    
    (f) $\Rightarrow$ (b): By assumption there exists a cusp $\vect{e}\in L$ in the exceptional lattice, that is, $\Aut(\mathcal{D}_L)$ coincides with $\Stabe$ up to a finite group. For any $g\in \Aut(\mathcal{D}_L)$, there is an $n\ge 1$ such that $g^n$ preserves the element $\vect{e}$, and therefore $g^n$ has zero entropy. It follows that $g$ has zero entropy as well, since otherwise $g$ would have an eigenvalue of norm $>1$, and the same would hold for $g^n$.

    (b) $\Rightarrow$ (c): We claim first that there exists a cusp of $\mathcal{D}_L$ with infinite stabilizer. If $f\in \Aut(\mathcal{D}_L)$ is any element of infinite order, by assumption $f$ has zero entropy (or equivalently it is parabolic), and therefore $f$ fixes some cusp $\vect{e}\in \mathcal{D}_L$. We deduce that the stabilizer $\Stabe$ is infinite.
    Assume by contradiction that there is a second cusp $\vect{e'}\in \mathcal{D}_L$ with infinite stabilizer, and choose any $g\in \Stabe$, $g'\in \Stabee$ of infinite order. The subgroup $\Gamma=\langle g,g'\rangle$ of $\Aut(\mathcal{D}_L)$ contains only elliptic and parabolic isometries by assumption, and therefore by \cite[Theorem~12.2.3]{ratcliffe} all the isometries of $\Gamma$ fix the same cusp of $L$. Since $g$ and $g'$ fix a unique cusp of $\mathcal{D}_L$, we deduce that $\vect{e}=\vect{e'}$, a contradiction.
    
    (c) $\Rightarrow$ (a): We show first that $L$ contains a $(-2)$-root. Let $\vect{e}\in \mathcal{D}_L$ be the cusp with infinite stabilizer $\Stabe$, and take any $\vect{f}\in L$ with $\vect{e}.\vect{f}=n=\vect{e}.L$. If $\vect{f}^2=2k$, consider the isotropic vector $\vect{v_0}=-k\vect{e}+n\vect{f}$, and denote by $\vect{v}\in L$ the unique primitive isotropic vector in $\langle \vect{v_0} \rangle \otimes \QQ$ with $\vect{v}.\vect{e}>0$. Notice that $\vect{v}$ is positive, and by assumption it is either non-fundamental or it has finite stabilizer. In the first case there is a positive $(-2)$-root $\vect{r}$ such that $\vect{v}.\vect{r}<0$, while in the second there is a positive $(-2)$-root $\vect{r}$ such that $\vect{v}.\vect{r}=0$ by Proposition \ref{prop:generalization.Shioda.Tate}. In both cases, $L$ contains a $(-2)$-root. Since by assumption the symmetry group of $L$ is infinite, $L$ contains infinitely many simple $(-2)$-roots by Proposition \ref{prop:finitely.many.roots}.
        
    It remains to prove that the vector $\vect{e}\in L$ has bounded inner product with all the simple $(-2)$-roots of $L$. 
    Let $\vect{r_1},\ldots,\vect{r_m}$ be a set of representatives of the $\Aut(\mathcal{D}_L)$-orbits of simple $(-2)$-roots of $L$. Denote $N\coloneqq \max\{\vect{e}.\vect{r_i} : 1\le i \le m\}$.
    Since $\vect{e}$ is the only cusp of $\mathcal{D}_L$ with infinite stabilizer, clearly $\vect{e}$ is fixed by the whole symmetry group $\Aut(\mathcal{D}_L)$. Now, if $\vect{r}$ is any simple $(-2)$-root in $L$, by construction there exists an isometry $g\in \Aut(\mathcal{D}_L)$ and an $1\le i \le m$ such that $\vect{r}=g(\vect{r_i})$. But then
    $$\vect{e}.\vect{r}=g(\vect{e}).\vect{r}=\vect{e}.g^{-1}(\vect{r})=\vect{e}.\vect{r_i} \le N,$$
    as desired.
\end{proof}

\begin{remark}   
 A Borcherds lattice $L$ admits only one cusp with bounded inner product with all the simple $(-2)$-roots of $L$. Indeed, if there were two distinct ones, say $\vect{e}$ and $\vect{e'}$, then their sum $\vect{e}+\vect{e'}$ would be a fundamental vector of positive square with bounded inner product with all the simple $(-2)$-roots of $L$. This would imply the existence of only finitely many simple $(-2)$-roots, contradicting the definition of Borcherds lattices.

 Let $\vect{e}\in L$ be the cusp with bounded inner product will all the simple $(-2)$-roots of $L$. The vector $\vect{e}$ is also the unique cusp of $\mathcal{D}_L$ with infinite stabilizer, since it follows from the previous point that the symmetry group $\Aut(\mathcal{D}_L)$ fixes $\vect{e}$.
\end{remark}

A version of the following result was proved by Nikulin (see \cite[Theorem~9.1.1~and~its~preceding~discussion]{nikulin.factor.groups}). We show that in fact a slightly stronger version of the theorem holds:

\begin{theorem}\label{thm:virtually.abelian} 
Every Borcherds lattice $L$ has a virtually abelian symmetry group $\Aut(\mathcal{D}_L)$. Moreover every hyperbolic lattice $L$ with an infinite, virtually solvable symmetry group is a Borcherds lattice, assumed that $\rk(L)\ge 5$ or $\mathcal{D}_L$ admits a cusp with infinite stabilizer. 
\end{theorem}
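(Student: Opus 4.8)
The plan is to treat the two assertions in turn; the first is immediate, and the second I would split according to the dynamical type of the symmetry group.

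\emph{The virtually abelian part.} A Borcherds lattice has infinite symmetry group (as noted after Definition~\ref{definition:Borcherds.lattice}), so Theorem~\ref{thm:equivalence.characterizations.Borcherds} applies. Its condition~(d) produces a cusp $\vect e$ of $\mathcal D_L$ with $\Aut(\mathcal D_L)=\Stabe$, and Proposition~\ref{prop:generalization.Shioda.Tate} exhibits a finite-index normal subgroup of $\Stabe$ isomorphic to $\ZZ^m$. Hence $\Aut(\mathcal D_L)$ is virtually abelian.

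\emph{The converse: reduction to two cases.} Let $\Aut(\mathcal D_L)$ be infinite and virtually solvable; by the Tits alternative it contains no non-abelian free subgroup. Since it lies in $\Or^+(L)$, which preserves the lattice $L$, it is a discrete group of isometries of $\mathbb H_L$, and a standard ping-pong argument (see e.g.\ \cite[Ch.~12]{ratcliffe}) then forces it to be \emph{elementary}, i.e.\ to have a finite orbit in $\overline{\mathbb H}_L$. Being infinite it cannot fix a point of $\mathbb H_L$ (such a discrete group would be conjugate into a compact orthogonal group), so it is either of \emph{parabolic type}, fixing a unique boundary point and containing a parabolic isometry, or of \emph{loxodromic type}, preserving an unordered pair of boundary points and containing a hyperbolic isometry.

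\emph{The case where $\mathcal D_L$ has a cusp with infinite stabilizer.} Here, by Theorem~\ref{thm:equivalence.characterizations.Borcherds}(c), it suffices to prove that such a cusp $\vect e$ is unique. If $\vect e'\neq\vect e$ were another one I would pick $g\in\Stabe$ and $g'\in\Stabee$ of infinite order; each is parabolic or hyperbolic, with fixed-point set on $\partial\mathbb H_L$ equal to $\{\vect e\}$ or $\{\vect e,\xi\}$, resp.\ $\{\vect e'\}$ or $\{\vect e',\xi'\}$, where $\xi,\xi'$ are irrational by \cite[Remarque~1.1]{cantat.dynamics.K3.1}. Since $\vect e\neq\vect e'$, the rational cusps differ from the irrational points, and two hyperbolic isometries with a single common fixed point would generate a non-discrete group, these sets are disjoint; so $\langle g,g'\rangle$ has no finite orbit on $\overline{\mathbb H}_L$ and hence contains a non-abelian free subgroup, contradicting virtual solvability. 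Thus $L$ is a Borcherds lattice, and this argument needs no hypothesis on $\rk L$.

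\emph{The case $\rk L\geq5$.} By Meyer's theorem $L$ is isotropic over $\QQ$, and since every primitive isotropic vector of $\overline{\mathcal P}_L$ is $W(L)$-equivalent to a cusp, $\mathcal D_L$ has a cusp $\vect e$. If $\Aut(\mathcal D_L)$ is of parabolic type, its parabolic element fixes a cusp (Section~\ref{sect:fundamental domain}), hence so does the whole group, giving a cusp with infinite stabilizer; likewise if the cusp $\vect e$ produced above already has $\Stabe$ infinite; in either subcase the previous paragraph concludes. The remaining, and I expect hardest, step is to rule out the loxodromic type when $\rk L\geq5$: there $\Aut(\mathcal D_L)$ is virtually infinite cyclic, generated up to finite index by a hyperbolic isometry $g$ whose characteristic polynomial on $L$ is a product of cyclotomic factors together with one Salem polynomial of degree $\geq4$; consequently no power of $g$ fixes a cusp, because the exceptional lattice $E(L)$ is negative definite (it is not hyperbolic, $\Aut(\mathcal D_L)$ being infinite, and not parabolic, since $L$ has positive entropy in this case) while cusps are isotropic. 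Hence every cusp of $\mathcal D_L$ has finite stabilizer and, by Proposition~\ref{prop:generalization.Shioda.Tate}, a negative definite \emph{root overlattice} of rank $\rk L-2\geq3$ as transverse lattice. Deriving a contradiction from this configuration --- using that $g$ permutes the resulting infinite family of cusps and that $\rk L\ge5$ forces the transverse root overlattice to have rank $\geq3$, unlike in the rank-$\le4$ counterexamples --- is the crux of the proof, where I would follow and refine Nikulin \cite[Theorem~9.1.1]{nikulin.factor.groups}. Once the loxodromic type is excluded, $\Aut(\mathcal D_L)$ is of parabolic type and $L$ is a Borcherds lattice.
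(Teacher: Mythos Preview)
Your treatment of the first assertion and of the case where $\mathcal{D}_L$ already admits a cusp with infinite stabilizer is correct, though your route differs from the paper's. Where you invoke the classification of elementary discrete groups and a ping-pong contradiction, the paper argues more directly: since virtually solvable is equivalent to virtually abelian for such groups \cite[Theorems~5.5.9,~5.5.10]{ratcliffe}, pick any $g\in\Aut(\mathcal D_L)$ and $g'\in\Stabe$ of infinite order, find $n$ with $g^n g'^n=g'^n g^n$, and observe that $g'^n$ then fixes $g^n(\vect e)$; if $g^n(\vect e)\neq\vect e$ this forces $g'^n$ to fix a vector of positive square, contradicting its infinite order. This yields $[\Aut(\mathcal D_L):\Stabe]<\infty$ straight away, without appealing to the elementary/ping-pong machinery.

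The real divergence, and the genuine gap in your proposal, is in the case $\rk L\ge 5$. You attempt to rule out the loxodromic type directly, acknowledge that ``deriving a contradiction from this configuration \dots\ is the crux,'' and then defer to following and refining Nikulin without carrying anything out. The paper sidesteps this entirely: rather than excluding the loxodromic case, it shows that when $\rk L\ge 5$ and $\Aut(\mathcal D_L)$ is infinite, a cusp with infinite stabilizer \emph{always exists}, thereby reducing to the case already handled. For $\rk L\ge 6$ this is a direct citation of Nikulin \cite[Theorem~6.4.1]{nikulin.factor.groups}; for $\rk L=5$ the only lattices with infinite symmetry group and every cusp of finite stabilizer are the explicit families $\langle 2^m\rangle\oplus D_4$ ($m\ge 5$) and $\langle 2\cdot 3^{2m-1}\rangle\oplus A_2^2$ ($m\ge 2$), and one checks directly (again via \cite[Theorem~9.1.1]{nikulin.factor.groups}) that their symmetry groups are not virtually abelian. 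Your plan to work inside the loxodromic configuration and exploit that the transverse lattices are root overlattices of rank $\ge 3$ may well be made to work, but as written it is an outline with the decisive step missing; the paper's reduction is both shorter and complete.
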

\begin{proof}
The first statement follows from Theorem \ref{thm:equivalence.characterizations.Borcherds} and Proposition \ref{prop:generalization.Shioda.Tate}, since the symmetry group $\Aut(\mathcal{D}_L)$ coincides with $\Stabe$ for some cusp $\vect{e}\in \mathcal{D}_L$.

Before proving the converse implication, we recall that for a hyperbolic lattice $L$, $\Aut(\mathcal{D}_L)$ is virtually solvable if and only if it is virtually abelian (cf. \cite[Theorems~5.5.9,~5.5.10]{ratcliffe}).

We can now prove the second implication in the statement. Assume first that $\mathcal{D}_L$ admits a cusp with infinite stabilizer, that we denote $\vect{e}$.
Let $g\in \Aut(\mathcal{D}_L)$ be any element, and choose $g'\in \Stabe$ of infinite order. Since $\Aut(\mathcal{D}_L)$ is virtually abelian, there is an $n\ge 1$, independent of $g$ and $g'$, such that $g^n$ and $g'^n$ commute. Then $g'^n\circ g^n(\vect{e})=g^n \circ g'^n(\vect{e})=g^n(\vect{e})$, so $g'^n$ preserves the isotropic vector $g^n(\vect{e})$. If by contradiction $g^n(\vect{e})\ne \vect{e}$, then $g^n(\vect{e}).\vect{e}>0$ and $g'^n$ preserves the element $\vect{e}+g^n(\vect{e})$ of positive square. This however contradicts the fact that $g^n$ has infinite order.
We deduce that $g^n(\vect{e})=\vect{e}$, and in particular that $\Aut(\mathcal{D}_L)$ coincides up to a finite group with $\Stabe$. Hence $L$ is a Borcherds lattice by Theorem \ref{thm:equivalence.characterizations.Borcherds}.

This is sufficient to prove the statement if $\rk(L)\ge 6$, since every hyperbolic lattice of rank $\ge 6$ with infinite symmetry group admits an isotropic vector with infinite stabilizer \cite[Theorem~6.4.1]{nikulin.factor.groups}. Moreover the only hyperbolic lattices of rank $5$ with infinite symmetry group, such that the stabilizer of every isotropic vector is finite, are those of the form $\langle 2^m\rangle \oplus D_4$, with $m\ge 5$, and $\langle 2\cdot 3^{2m-1}\rangle \oplus A_2^2$, with $m\ge 2$, and a direct calculation shows that their symmetry groups are not virtually abelian (see the proof of \cite[Theorem~9.1.1]{nikulin.factor.groups} for more details).
\end{proof}

\begin{remark} \label{rk:useful.remarks.borcherds}
\begin{itemize}
    \item Every Borcherds lattice has rank $\ge 3$. Indeed, every hyperbolic isotropic lattice of rank $2$ 
    has finite symmetry group (see for instance \cite[Corollary~3.4]{galluzzi.lombardo.peters}) and in particular finitely many simple $(-2)$-roots.
   
    \item It is not true that all hyperbolic lattices $L$ of rank $\le 4$ with an infinite, but virtually solvable symmetry group $\Aut(\mathcal{D}_L)$ are Borcherds lattices. For instance every hyperbolic lattice of rank $2$ has a virtually abelian symmetry group \cite[Corollary~3.4]{galluzzi.lombardo.peters}, but most of them are not even isotropic.
    Moreover the lattice $L=U(20)\oplus \langle -2\rangle$ is isotropic and it has positive entropy, but its symmetry group is virtually abelian. The same happens for the lattice $L=U(6)\oplus A_1^2$ in rank $4$ (we compute their symmetry group via Borcherds' method explained in Section \ref{sec:borcherds.method}, and we check that it is virtually abelian by using the algorithm described in \cite{detinko.flannery.obrien.tits} and its implementation in Magma \cite{magma}).
    Nevertheless, the following proposition shows that if $L$ has positive entropy and a virtually solvable symmetry group, the rank of $\Aut(\mathcal{D}_L)$ must be $1$.
\end{itemize}
\end{remark}

\begin{proposition} \label{prop:virtually.abelian.positive.entropy} \cite[Theorems~5.5.9,~5.5.10]{ratcliffe}
Let $L$ be a hyperbolic lattice with an infinite, but virtually solvable symmetry group. Then either $L$ is a Borcherds lattice or $\Aut(\mathcal{D}_L)$ is virtually cyclic, i.e. it contains a subgroup of finite index isomorphic to $\ZZ$. 
\end{proposition}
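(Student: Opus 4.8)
The plan is to argue in contrapositive form: supposing $L$ is \emph{not} a Borcherds lattice, I will show that $G:=\Aut(\mathcal{D}_L)$ is virtually cyclic. By hypothesis $G$ is infinite and virtually solvable, hence virtually abelian (as recalled in the proof of Theorem~\ref{thm:virtually.abelian}); in particular it contains no nonabelian free subgroup, so it is an \emph{elementary} group of isometries of the hyperbolic space $\mathbb{H}_L$. Moreover $G$ is discrete, being a subgroup of $\Or^+(L)$. An infinite discrete elementary subgroup of $\mathrm{Isom}(\mathbb{H}_L)$ cannot fix a point of $\mathbb{H}_L$ (such a group lies in a maximal compact subgroup and, being discrete, would be finite); so, by the classification of elementary groups \cite[\S 5.5]{ratcliffe}, exactly one of the following holds: (i) $G$ fixes a unique point of $\partial\mathbb{H}_L$ and every element of $G$ is elliptic or parabolic; or (ii) $G$ leaves invariant a geodesic $\gamma$ of $\mathbb{H}_L$ (equivalently, a two-element subset $\{\eta_+,\eta_-\}\subseteq\partial\mathbb{H}_L$), and the subgroup $G_0\le G$ fixing $\eta_+$ and $\eta_-$ individually has index at most $2$ and is virtually infinite cyclic.

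In case (i), every element of $G$ has zero entropy by the classification of isometries recalled before Proposition~\ref{prop:zero.entropy.isotropic}, so $L$ has zero entropy, and then Theorem~\ref{thm:equivalence.characterizations.Borcherds} (applicable because $G$ is infinite) forces $L$ to be a Borcherds lattice, contrary to our assumption. Hence we must be in case (ii).

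In case (ii), it then suffices to note that $G_0$ being virtually infinite cyclic of finite index in $G$ makes $G$ virtually infinite cyclic; being infinite, $G$ contains a finite-index subgroup isomorphic to $\ZZ$, which is the second alternative of the statement. If one prefers to avoid quoting the loxodromic case of \cite[\S 5.5]{ratcliffe}, the virtual cyclicity of $G_0$ can be seen directly: $G_0$ lies in the stabilizer of the oriented geodesic $\gamma$ in $\mathrm{Isom}(\mathbb{H}_L)$, a closed subgroup isomorphic to the Lie group $\RR\times\Or(\rk(L)-2)$ (translation along $\gamma$ composed with a rotation of the normal directions), and $G_0$ is discrete in it; hence its intersection with the compact factor $\{0\}\times\Or(\rk(L)-2)$ is finite, while its image under the projection to $\RR$ is a countable closed, hence discrete, subgroup of $\RR$, so trivial or infinite cyclic.

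I expect the only delicate point to be the correct bookkeeping with Ratcliffe's classification of elementary groups: checking that case (i) genuinely contains no hyperbolic element (so that it collapses to ``$L$ is a Borcherds lattice'' via Theorem~\ref{thm:equivalence.characterizations.Borcherds}), and being careful that the invariant geodesic in case (ii) need not be defined over $\QQ$, so that one really does want the real Lie-group stabilizer of $\gamma$ rather than an orthogonal-complement lattice argument. Everything else is formal once Theorem~\ref{thm:equivalence.characterizations.Borcherds} is in hand.
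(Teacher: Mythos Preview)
Your argument is correct and is exactly what the paper intends by its bare citation of \cite[Theorems~5.5.9,~5.5.10]{ratcliffe}: the paper gives no independent proof, and you have simply unpacked how Ratcliffe's classification of elementary discrete groups, combined with Theorem~\ref{thm:equivalence.characterizations.Borcherds}, yields the dichotomy.

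One small wrinkle in your optional direct argument for case~(ii): the projection of a discrete subgroup $G_0\subseteq \RR\times K$ ($K$ compact) to $\RR$ is indeed discrete, but not because the image is \emph{a priori} closed---countable subgroups of $\RR$ need not be closed. The correct justification is that if $(t_n,k_n)\in G_0$ are distinct with $t_n\to 0$, compactness of $K$ yields a subsequence with $k_n k_{n+1}^{-1}\to e$, hence $g_n g_{n+1}^{-1}\to e$ nontrivially in $G_0$, contradicting discreteness. This does not affect your main line, which already cites Ratcliffe for the hyperbolic-type case.
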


\section{The classification} \label{sec:classification}

The goal of this section is to classify Borcherds and Leech type lattices. 

\subsection{A structure lemma for isotropic hyperbolic lattices}

In the following let $L$ be an \emph{isotropic} hyperbolic lattice, i.e. a hyperbolic lattice containing an isotropic vector.

\begin{lemma} \label{lemma:structure.lemma}
For any isotropic vector $\vect{e}\in L$, there exists a basis $\mathcal{B}=\{\vect{e},\vect{f},\vect{w_1},\ldots,\vect{w_r}\}$ of $L$ such that the corresponding Gram matrix is of the form
\begin{equation} \label{eq:matrix}
\left(\begin{array}{cc|ccc}
0 &n &0 &\ldots &0\\
n &2k & &\underline{\ell}^T &\\
\hline
0 & & & &\\
\vdots &\underline{\ell} & &W &\\
0 & & & &
\end{array}\right),
\end{equation}
where $-n\le k < n$, $0\le \ell_i < n$ for each entry $\ell_i$ of $\underline{\ell}$, and $W$ is the Gram matrix of the negative definite lattice $\vect{e}^\perp/\langle \vect{e}\rangle$.
\end{lemma}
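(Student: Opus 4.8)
The plan is to build the basis in two stages: first a $\ZZ$-basis of $L$ adapted to the flag $\langle\vect{e}\rangle \subseteq \vect{e}^\perp \subseteq L$, and then a normalization of the remaining entries by elementary moves that fix $\vect{e}$.

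\textbf{Stage 1.} Since $\vect{e}$ is to lie in a basis it is primitive, so the image of the homomorphism $\vect{v}\mapsto\vect{v}.\vect{e}$ is $n\ZZ$ for a unique integer $n\ge 1$; choose $\vect{f_0}\in L$ with $\vect{f_0}.\vect{e}=n$. Because $L$ is hyperbolic and $\vect{e}^2=0$, the restriction of the form to $\vect{e}^\perp$ is negative semidefinite with kernel exactly $\ZZ\vect{e}$, so $W\coloneqq\vect{e}^\perp/\langle\vect{e}\rangle$ is negative definite of rank $r=\rk(L)-2$ and $\vect{e}$ is primitive in $\vect{e}^\perp$. Lift a basis of $W$ to vectors $\vect{w_1},\ldots,\vect{w_r}\in\vect{e}^\perp$; then $\{\vect{e},\vect{w_1},\ldots,\vect{w_r}\}$ is a $\ZZ$-basis of $\vect{e}^\perp$, and since $L/\vect{e}^\perp\cong\ZZ$ is generated by the class of $\vect{f_0}$, the set $\{\vect{e},\vect{f_0},\vect{w_1},\ldots,\vect{w_r}\}$ is a basis of $L$. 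With respect to it the Gram matrix already has the block shape \eqref{eq:matrix}: the $\vect{e}$-row and column equal $(0,n,0,\ldots,0)$, the bottom-right block is the Gram matrix of $W$ (the form descends because $\vect{e}.\vect{w_i}=0$ and $\vect{e}^2=0$), one has $\vect{f_0}^2=2k$ with $k\in\ZZ$ since $L$ is even, and $\vect{f_0}.\vect{w_i}=\ell_i\in\ZZ$.

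\textbf{Stage 2.} It remains to bring $k$ and the $\ell_i$ into the prescribed ranges, which we do using that $\vect{e}$ is orthogonal to $\vect{e}$ and to each $\vect{w_i}$. Replacing $\vect{w_i}$ by $\vect{w_i}+b_i\vect{e}$ ($b_i\in\ZZ$) leaves the $\vect{e}$-row and column and the block $W$ unchanged while turning $\ell_i$ into $\ell_i+b_in$; a suitable choice of each $b_i$ achieves $0\le\ell_i<n$. Then replacing $\vect{f_0}$ by $\vect{f}=\vect{f_0}+c\vect{e}$ ($c\in\ZZ$) leaves $n=\vect{f}.\vect{e}$ and all the $\ell_i=\vect{f}.\vect{w_i}$ unchanged, while $\vect{f}^2=\vect{f_0}^2+2cn$, so $k$ becomes $k+cn$; a suitable choice of $c$ achieves $-n\le k<n$. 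The basis $\{\vect{e},\vect{f},\vect{w_1},\ldots,\vect{w_r}\}$ with the adjusted $\vect{w_i}$ now has Gram matrix exactly of the form \eqref{eq:matrix}.

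I do not anticipate a real obstacle here; the argument is essentially bookkeeping. The two points that deserve care are both in Stage 1: checking that $W$ is genuinely negative definite, which is precisely where hyperbolicity of $L$ and isotropy of $\vect{e}$ enter, and checking that a lift of a basis of $W$ together with $\vect{e}$ is indeed a $\ZZ$-basis of $\vect{e}^\perp$, which uses primitivity of $\vect{e}$ in $\vect{e}^\perp$. It is also worth recording that the two moves in Stage 2 are independent: adding multiples of $\vect{e}$ to the $\vect{w_i}$ only changes the entries $\vect{f_0}.\vect{w_i}$, and adding a multiple of $\vect{e}$ to $\vect{f_0}$ only changes $\vect{f_0}^2$; so they may be performed in either order.
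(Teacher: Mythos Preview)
Your proof is correct and close in spirit to the paper's, but you build the initial basis differently. The paper first picks $\vect{f}$ with $\vect{e}.\vect{f}=n$, proves that $\langle\vect{e},\vect{f}\rangle$ is primitive in $L$ (arguing that otherwise some $\vect{f'}$ in the saturation would have $\vect{e}.\vect{f'}<n$), extends $\{\vect{e},\vect{f}\}$ to a basis of $L$, and then subtracts multiples of $\vect{f}$ from the remaining basis vectors to push them into $\vect{e}^\perp$. You instead work directly with the flag $\langle\vect{e}\rangle\subseteq\vect{e}^\perp\subseteq L$, lifting a basis of $W=\vect{e}^\perp/\langle\vect{e}\rangle$ and then adjoining a generator of $L/\vect{e}^\perp$. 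Your route sidesteps the primitivity argument for the rank-$2$ sublattice at the cost of a short remark on why lifts of bases through the two free quotients assemble to a basis of $L$; the normalization stage is identical in both proofs. Either approach is entirely standard and adequate here.
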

\begin{proof}
Let $\vect{e}\in L$ be a primitive isotropic vector, with index $n=\vect{e}.L$. There exists a primitive vector $\vect{f}\in L$ with $\vect{e}.\vect{f}=n$, and up to changing $\vect{f}$ with $\vect{f}+\alpha \vect{e}$ for some $\alpha\in \ZZ$, we may assume that $\vect{f}^2=2k\in [-n,n)$.

The sublattice $H=\langle \vect{e},\vect{f}\rangle$ is primitive in $L$: if it were not, its saturation $H_{sat}$ in $L$ would contain a vector $\vect{f'}=\frac{1}{c}(a \vect{e}+b\vect{f})$ with $0<b<c$, and thus $\vect{e}.\vect{f'}<n$, contradicting the minimality of $n$.

Hence we may extend $\{\vect{e},\vect{f}\}$ to a basis $\{\vect{e},\vect{f},\vect{v_1},\ldots,\vect{v_r}\}$ of $L$. Since by assumption the index of $\vect{e}$ is $n$, the products $\vect{e}.\vect{v_i}$ are multiples of $n$ for every $1\le i\le r$. In particular we can substitute $\vect{v_i}$ with $\vect{w_i}\coloneqq \vect{v_i}- \frac{\vect{e}.\vect{v_i}}{n}\vect{f}$ and obtain a Gram matrix for $L$ as in (\ref{eq:matrix}). Up to substituting $\vect{w_i}$ with $\vect{w_i}+\alpha \vect{e}$ for some $\alpha\in \ZZ$, we may assume that $0\le \ell_i < n$ for each entry $\ell_i$ of $\underline{\ell}$.

In order to conclude the proof, notice that the sublattice $\vect{e}^\perp$ of $L$ admits the basis $\{\vect{e},\vect{w_1},\ldots,\vect{w_r}\}$. Hence $\vect{w_1},\ldots,\vect{w_r}$ descend to a basis of $\vect{e}^\perp /\langle \vect{e}\rangle$, showing that in fact $W$ is the Gram matrix of the lattice $ \vect{e}^\perp / \langle \vect{e}\rangle$.
\end{proof}

The choice of a basis for $L$ as above is convenient to compute the inner product of two given vectors. The following computation will be useful in the paper:

\begin{lemma} \label{lemma:computation}
Let $L$ be a hyperbolic lattice with Gram matrix as in (\ref{eq:matrix}), with basis $\{\vect{e},\vect{f},\vect{w_1},\ldots,\vect{w_r}\}$.
Let $\vect{v}=\alpha \vect{e} + \beta \vect{f} + \vect{\gamma}\in L$ be a vector of square $2N$ and $\vect{w}=x\vect{e}+y\vect{f}+\vect{z}\in L$ a vector of square $2M$, where $\alpha,\beta,x,y\in \ZZ$, $\beta,y\ne 0$, and $\vect{\gamma},\vect{z}\in W$. Then
$$\vect{v}.\vect{w} = \frac{1}{\beta y}\left( -\frac{1}{2}(y\vect{\gamma}-\beta\vect{z})^2 + Ny^2 + M\beta^2 \right).$$
\end{lemma}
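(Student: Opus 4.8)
The plan is to avoid expanding $\vect{v}.\vect{w}$ term by term in the basis $\mathcal{B}$ and instead to exploit a single well-chosen linear combination. First I would set $\vect{u}\coloneqq y\vect{v}-\beta\vect{w}$ and observe that its $\vect{f}$-coefficient vanishes: writing it out in the basis $\mathcal{B}=\{\vect{e},\vect{f},\vect{w_1},\ldots,\vect{w_r}\}$ gives $\vect{u}=(\alpha y-\beta x)\vect{e}+(y\vect{\gamma}-\beta\vect{z})$, where $y\vect{\gamma}-\beta\vect{z}\in W$. From the shape of the Gram matrix (\ref{eq:matrix}) we have $\vect{e}^2=0$ and $\vect{e}.\vect{w_i}=0$ for all $i$, so the $\vect{e}$-component of $\vect{u}$ contributes nothing to $\vect{u}^2$; moreover the lower-right block of (\ref{eq:matrix}) is exactly $W$, so $\vect{u}^2$ equals the square of $y\vect{\gamma}-\beta\vect{z}$ computed in $W$. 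Hence $\vect{u}^2=(y\vect{\gamma}-\beta\vect{z})^2$.

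On the other hand, bilinearity gives $\vect{u}^2=y^2\vect{v}^2-2\beta y\,(\vect{v}.\vect{w})+\beta^2\vect{w}^2=2Ny^2-2\beta y\,(\vect{v}.\vect{w})+2M\beta^2$. Equating the two expressions for $\vect{u}^2$ and solving for $\vect{v}.\vect{w}$—which is legitimate since $\beta,y\neq 0$—yields exactly
\[
\vect{v}.\vect{w}=\frac{1}{\beta y}\left(-\tfrac{1}{2}(y\vect{\gamma}-\beta\vect{z})^2+Ny^2+M\beta^2\right),
\]
as claimed.

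There is essentially no obstacle: the only point requiring care is noticing that the $\vect{f}$-coefficients cancel in $y\vect{v}-\beta\vect{w}$ and that, because $\vect{e}$ is isotropic and orthogonal to all the $\vect{w_i}$, the remaining $\vect{e}$-term is invisible to the quadratic form. One could alternatively brute-force the identity by expanding both sides in coordinates and using the constraints $\vect{v}^2=2N$ and $\vect{w}^2=2M$ to eliminate the terms involving $2\alpha\beta n+2\beta(\vect{\gamma}.\vect{f})$ and $2xyn+2y(\vect{z}.\vect{f})$; the argument above just packages that cancellation conceptually.
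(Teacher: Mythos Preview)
Your proof is correct and in fact cleaner than the paper's. The paper proceeds by brute force: it writes out the equations $\vect{v}^2=2N$ and $\vect{w}^2=2M$ in coordinates, expands $\vect{v}.\vect{w}$ directly via the Gram matrix, and then massages the resulting expression into the stated form by substituting in those two constraints. Your trick of forming $\vect{u}=y\vect{v}-\beta\vect{w}$ to kill the $\vect{f}$-coefficient and then computing $\vect{u}^2$ in two ways packages exactly the same cancellation without ever writing down the cross terms $n\alpha y$, $n\beta x$, $\beta\,\underline{\ell}^T\underline{z}$, etc. The paper's approach has the minor advantage of being fully explicit, but yours makes it transparent \emph{why} the entries $n$, $k$, $\underline{\ell}$ of the Gram matrix disappear from the final answer: they are all attached to $\vect{f}$, and $\vect{u}$ has no $\vect{f}$-component.
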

\begin{proof}
By assumption we have
$$\begin{cases}
2N=\vect{v}^2 = 2n\alpha\beta + 2k\beta^2 + 2\beta \underline{\ell}^T . \underline{\gamma} + \underline{\gamma}^T.W.\underline{\gamma}\\
2M = \vect{w}^2 = 2nxy +2ky^2+2y \underline{\ell}^T. \underline{z} + \underline{z}^T.W.\underline{z}
\end{cases}$$
where $\underline{\gamma}$ (resp. $\underline{z}$) is the column vector of coefficients of $\vect{\gamma}$ (resp. $\vect{z}$) in $W$ with respect to the chosen basis.
A straightforward computation shows
$$\vect{v}.\vect{w} = n\alpha y + n\beta x +2k \beta y +\beta \underline{\ell}^T.\underline{z} + y \underline{\ell}^T.\underline{\gamma}+\underline{\gamma}^T.W.\underline{z}=\frac{1}{\beta y}\left( -\frac{1}{2}(y\vect{\gamma}-\beta\vect{z})^2 + Ny^2 + M\beta^2 \right).$$

\end{proof}

\subsection{The classification of Leech type lattices}

Recall that a negative definite lattice $W$ is a Leech type lattice if $U\oplus W$ is a Borcherds lattice, or, equivalently by Theorem \ref{thm:equivalence.characterizations.Borcherds}, if the symmetry group of $U\oplus W$ is infinite and of zero entropy. Since any lattice $W'$ in the genus of $W$ gives rise to an isometric hyperbolic lattice $L=U\oplus W \cong U\oplus W'$, we will classify Leech type lattices by including only one representative for each genus. The final result is as follows:

\begin{theorem} \label{thm:leech.classification}
There are $172$ distinct (genera of) Leech type lattices, and the list can be found in the ancillary files.
\end{theorem}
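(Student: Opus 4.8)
The plan is to reduce the classification to a finite explicit search and carry it out with Borcherds' method. Since $U\oplus W$ and $U\oplus W'$ are isometric exactly when $W$ and $W'$ lie in the same genus, it suffices to exhibit one representative of each genus of Leech type lattices and to check that there are $172$ of them. I would split the problem using Proposition~\ref{prop:genus.Leech.type.lattice}, according to which the genus of a Leech type lattice contains exactly one member that is not a root overlattice. Consequently every such genus is represented either \textbf{(A)} by a root overlattice it contains, or \textbf{(B)} by a lattice unique in its genus: if the genus of a Leech type lattice $W$ has more than one element it contains a root overlattice, which lands us in case (A); and if it is a singleton, its sole element is the non-root overlattice of that genus and in particular is unique in its genus, which lands us in case (B). So it is enough to run through the lattices occurring in (A) and (B) and decide which of them are Leech type.

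Next I would make both families finite. In case (B) I would invoke the classification of positive definite lattices unique in their genus — due to Watson, completed and corrected with computer aid by Kirschmer, Lorch and Voight \cite{lorch.kirschmer.single.class,voight}, and tabulated in the catalogue \cite{catalogue} — which up to scaling and isometry is a finite list, from which I discard the root overlattices. Passing to arbitrary rescalings $W(n)$ would a priori re-introduce infinitely many lattices, but Proposition~\ref{prop:covering.radius} bounds the covering radius of a Leech type lattice (as one sees already from the Leech lattice, whose covering radius is $\sqrt{2}$), and since the covering radius of $W(n)$ grows with $n$ only finitely many rescalings survive; the rank $2$ situation, where the underlying classification is only known under GRH, is handled unconditionally as explained in Section~\ref{sec:GRH}. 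In case (A) I would use that there are finitely many root overlattices of each rank, together with a uniform rank bound for Leech type lattices coming from Proposition~\ref{prop:covering.radius}, and I would prune the resulting list with the necessary condition, extracted from Proposition~\ref{prop:generalization.Shioda.Tate}, that the genus of a Leech type root overlattice $W$ must contain a lattice whose root part has rank strictly less than $\rk(W)$, hence in particular a non-root overlattice.

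Finally, for each of the finitely many surviving candidates $W$ I would form $L=U\oplus W$ and test the equivalent conditions of Theorem~\ref{thm:equivalence.characterizations.Borcherds}: concretely, compute $\Aut(\mathcal{D}_L)$ together with representatives of its orbits on the walls of $\mathcal{D}_L$ by Borcherds' method (Section~\ref{sec:borcherds.method}), and check whether $\Aut(\mathcal{D}_L)$ is infinite and fixes a (necessarily unique) cusp, equivalently whether $L$ has zero entropy. Collecting the genera that pass this test should yield the $172$ lattices recorded in the ancillary files. I expect the main obstacle to be the finiteness reduction: making the rank bound and the bound on rescalings effective, guaranteeing that Borcherds' method terminates on every candidate (which is automatic once zero entropy is known but must be arranged for the positive entropy lattices that have to be ruled out), and organizing the rank $2$ case so that the final list does not depend on GRH. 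The verification step itself is then a sizeable but essentially routine computation.
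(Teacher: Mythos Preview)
Your overall architecture---split via Proposition~\ref{prop:genus.Leech.type.lattice} into root overlattices and lattices unique in their genus, then test survivors with Borcherds' method---matches the paper. The gap is in your finiteness reduction: you invoke Proposition~\ref{prop:covering.radius} as if it bounded the covering radius of a Leech type lattice, but that proposition only gives a \emph{sufficient} condition (covering radius $\le \sqrt{2}$ implies Leech type), not a necessary one. Indeed Table~\ref{tab:root.overlattices.Leech.type} lists Leech type lattices with squared covering radius up to $17/6>2$. Hence neither your bound on rescalings $W(n)$ in case~(B) nor your uniform rank bound in case~(A) follows from that proposition.

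The paper obtains finiteness by different means. For case~(A), the rank bound is Proposition~\ref{prop:no.leech.rank.25}: one passes to a maximal overlattice (still Leech type by Proposition~\ref{prop:entropy.overlattice}), splits off $E_8^2$, and shows the remaining factor forces two non-isometric non-root-overlattices in the genus, contradicting Proposition~\ref{prop:genus.Leech.type.lattice}. For case~(B), the bound on multiples is Proposition~\ref{prop:bound.multiples}, which is recursive in the rank: one fixes a corank~$1$ primitive sublattice $T\subset W$, uses the already completed classification in rank $\rk W-1$ to know for which $m$ the lattice $T(m)$ is not Leech type, and then applies \cite[Theorem~4.6]{mezzedimi.entropy} (a positive-entropy criterion comparing $\disc W(m)$ to $2\disc T(m)$) to rule out $W(m)$ for large $m$. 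The base case in rank~$1$ is Nikulin's list $k\in\{2,3,4,5,7,9,13,25\}$. Once you replace the covering-radius argument with these two ingredients, the rest of your plan (including the GRH workaround in rank~$2$) goes through as in the paper.
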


The classification can be obtained by combining the partial classifications in Sections \ref{sec:root.overlattices.Leech} and \ref{sec:Leech.lattices.unique.genus}.
First, we state the following interesting consequences of Theorem \ref{thm:leech.classification}:

\begin{corollary}
\begin{enumerate}[(a)]
    \item Every Leech type lattice embeds primitively in some unimodular negative definite lattice of rank $24$.
    \item The Leech lattice is the only lattice of Leech type that is not unique in its genus and that contains no $(-2)$-root.
\end{enumerate}
\end{corollary}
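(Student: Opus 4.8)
I would reduce the existence of a primitive embedding to a genus--nonemptiness statement. By the standard theory of discriminant forms \cite{nikulin.integral.symmetric.bilinear}, a negative definite even lattice $W$ of rank $r\le 24$ embeds primitively into a negative definite even unimodular lattice of rank $24$ --- equivalently, into a Niemeier lattice --- if and only if there is a negative definite even lattice $T$ of rank $24-r$ whose discriminant form $q_T$ equals $-q_W$; one direction takes $T=W^\perp$, the other glues $W\oplus T$ along the diagonal of an identification $A_W\cong A_T$ to a unimodular lattice of rank $24$ and signature $(0,24)$. By Nikulin's existence theorem \cite[Theorem~1.10.1]{nikulin.integral.symmetric.bilinear}, such a $T$ exists provided a signature congruence, the bound $24-r\ge \ell(A_W)$, and certain $p$-adic conditions hold. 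The signature congruence is automatic for a Leech type lattice: since $U\oplus W$ has signature $(1,r+1)$ we get $\mathrm{sign}(q_W)\equiv -r\pmod 8$, and the required $r-24\equiv -\mathrm{sign}(q_W)\pmod 8$ reduces to $24\equiv 0\pmod 8$. For $r\le 12$ the other conditions hold as well, since $\ell(A_W)\le r\le 12\le 24-r$ and $p$-adically one realizes $-q_W$ in rank $24-r$ by padding $W(-1)$ with hyperbolic planes. For $13\le r\le 24$ only finitely many Leech type lattices occur, and one checks from the list of Theorem \ref{thm:leech.classification} that $\ell(A_W)\le 24-r$ holds in each case; for $r=24$ this forces $W\cong\Lambda$ outright.

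\textbf{Part (b).} First, $\Lambda$ has all three properties: it is of Leech type by definition, it is rootless, and it is one of the $24$ Niemeier lattices, hence not alone in its genus. Conversely, let $W$ be a rootless Leech type lattice. A root overlattice satisfies $\rk(W_{root})=\rk(W)$, whereas $W$ is rootless with $\rk(W)\ge 1$ (Leech type lattices have rank $\ge 1$, as $U\oplus W$ is a Borcherds lattice of rank $\ge 3$); so $W$ is not a root overlattice. By Proposition \ref{prop:genus.Leech.type.lattice} the genus of $W$ contains a unique lattice that is not a root overlattice, hence $W$ is that lattice, and if $W$ is moreover not unique in its genus then every other member of the genus is a root overlattice with nonzero root part. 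It then remains to scan the $172$ genera of Theorem \ref{thm:leech.classification}: for each genus that is not a single class, check whether its unique non-root-overlattice member is itself rootless. I would do this against the explicit list in the ancillary files, deciding single-classness with the help of the catalogue \cite{catalogue}, and conclude that this occurs only for the genus of even negative definite unimodular lattices of rank $24$, whose rootless member is $\Lambda$.

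\textbf{Main obstacle.} In both parts a finite but nontrivial check at higher rank remains. In (a) the uniform argument only settles $\rk(W)\le 12$; for $13\le \rk(W)\le 24$ one must verify from the classification that $\ell(A_W)\le 24-\rk(W)$ --- this, rather than the (automatic) signature condition, is what could a priori obstruct the embedding --- or, more transparently, exhibit these high-rank Leech type lattices directly as primitive sublattices of Niemeier lattices. In (b), after reducing to the distinguished non-root-overlattice member, one is left with an exhaustive pass over the $172$ genera; the delicate point is ensuring that no multi-class genus other than the Niemeier one contains a rootless lattice, for which the ancillary list and the tables of single-class definite lattices are the natural tools.
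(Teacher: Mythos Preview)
Your proposal is correct. The paper gives no proof at all for this corollary: it is stated immediately after Theorem~\ref{thm:leech.classification} as one of its ``interesting consequences'', so the paper's argument is simply a direct inspection of the $172$ genera in the ancillary list. Your approach ultimately rests on the same classification, but you add a layer of conceptual structure that the paper omits.

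For part (a), your reduction via Nikulin's existence theorem is a genuine improvement in presentation: it explains \emph{why} the embedding exists rather than merely asserting it, and it cuts the amount of case-checking roughly in half by handling $\rk(W)\le 12$ uniformly. One small sharpening: when you fall back on the list for $13\le r\le 24$, you should verify the \emph{strict} inequality $\ell(A_W)<24-r$ (which does hold in every case), since equality would leave Nikulin's $p$-adic conditions (c)--(d) in play and your ``padding'' argument no longer applies once $24-r<r$. Alternatively, as you note, one can simply exhibit the primitive embeddings into Niemeier lattices directly for these few high-rank cases.

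For part (b), your use of Proposition~\ref{prop:genus.Leech.type.lattice} to pin down the candidate as the unique non-root-overlattice member of its genus is exactly the right reduction; after that the exhaustive scan is unavoidable and is what the paper (implicitly) does as well.
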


We start with a sufficient and a necessary condition for a negative definite lattice to be of Leech type. Recall that the \emph{covering radius} of a positive definite lattice $P$ is the smallest $r>0$ with the property that, for any $\vect{q}_\RR\in P\otimes \RR$, there is $\vect{p}\in P$ such that $\sqrt{(\vect{q}_\RR-\vect{p})^2} \le r$.

Conway's \cite[Chapter 27, Theorem 1]{conway.sloane.lattices} proof that the Leech lattice is of Leech type leads to the following slight generalization.
\begin{proposition} \label{prop:covering.radius}
Let $W$ be a negative definite lattice that is not a root overlattice, and such that $W(-1)$ has covering radius $\le \sqrt{2}$. Then $W$ is a Leech type lattice.
\end{proposition}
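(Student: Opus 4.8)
The plan is to adapt Conway's proof that the Leech lattice is of Leech type (cf.\ \cite[Chapter 27, Theorem 1]{conway.sloane.lattices}). Write $L=U\oplus W$ with $U=\langle\vect{e},\vect{f}\rangle$ a hyperbolic plane; then $\vect{e}$ is a primitive isotropic vector with $\vect{e}.L=1$ and $\vect{e}^\perp/\langle\vect{e}\rangle\cong W$. First I would fix a Weyl chamber $\mathcal{D}_L$ having $\vect{e}$ as a cusp (such a chamber exists: the mirrors through $\vect{e}$ are those of the roots $a\vect{e}+\vect{\mu}$ with $\vect{\mu}$ a $(-2)$-root of $W$ and $a\in\ZZ$, which form finitely many families of parallel hyperplanes in the horosphere at $\vect{e}$, so a generic horocyclic direction avoids them as well as the finitely many non-incident mirrors near $\vect{e}$). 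Grading the $(-2)$-roots of $L$ by their \emph{height} $h=\vect{e}.\vect{r}\ge 0$, the height-$1$ roots are precisely the ``Leech roots'' $\vect{r}_\vect{w}=(-1-\tfrac12\vect{w}^2)\vect{e}+\vect{f}+\vect{w}$, one for each $\vect{w}\in W$, each with $\vect{e}.\vect{r}_\vect{w}=1$. The goal is to prove that every simple $(-2)$-root of $\mathcal{D}_L$ has height $\le 1$; since $\vect{e}$ is a cusp, this is exactly the statement that $\vect{e}$ has bounded inner product with all simple $(-2)$-roots.

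The key input is a computation, which I would deduce from Lemma~\ref{lemma:computation} (taking $n=1$, $k=0$, $\underline{\ell}=\vect{0}$): if $\vect{r}$ is a $(-2)$-root of height $h\ge 1$ with $W$-component $\vect{\mu}$, then
\[\vect{r}.\vect{r}_\vect{w}=\frac{1}{2h}\Bigl(-(\vect{\mu}-h\vect{w})^2-2h^2-2\Bigr)\qquad(\vect{w}\in W).\]
Because $W(-1)$ has covering radius $\le\sqrt2$, there is $\vect{w}\in W$ at distance $\le\sqrt2$ from $\vect{\mu}/h$ in $W(-1)\otimes\RR$, i.e.\ with $-(\vect{\mu}-h\vect{w})^2=h^2\bigl(-(\tfrac{\vect{\mu}}{h}-\vect{w})^2\bigr)\le 2h^2$; for that $\vect{w}$ we get $\vect{r}.\vect{r}_\vect{w}\le -1/h<0$, hence $\vect{r}.\vect{r}_\vect{w}\le -1$ since it is an integer. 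Consequently $s_{\vect{r}_\vect{w}}(\vect{r})=\vect{r}+(\vect{r}.\vect{r}_\vect{w})\vect{r}_\vect{w}$ has height $h+\vect{r}.\vect{r}_\vect{w}<h$, while $-\vect{r}.\vect{r}_\vect{w}\le h$ (the parenthesis above is $\ge -2h^2-2$), so the height never drops below $0$. Following Conway's argument, an induction on the height then gives that $\mathcal{D}_L$ has no simple root of height $\ge 2$: a positive root $\vect{r}$ of height $h\ge 2$ is reflected by such a Leech root $\vect{r}_\vect{w}$ to a root of smaller height, which lets one write $\vect{r}=\vect{r}'+c\,\vect{r}_\vect{w}$ with $c=-\vect{r}.\vect{r}_\vect{w}\ge 1$ and $\vect{r}'$ of height $h-c$; when $\vect{r}'$ is again positive this exhibits $\vect{r}-\vect{r}_\vect{w}=\vect{r}'+(c-1)\vect{r}_\vect{w}$ as a positive vector with $\vect{r}_\vect{w}\ne\vect{r}$, so $\vect{r}$ is not simple. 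The remaining simple roots, those of height $0$, form a set of simple roots of the finite root system $W_{root}$. In particular $\vect{e}.\vect{r}\le 1$ for every simple $(-2)$-root $\vect{r}$ of $L$.

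Finally I would check that $L$ has infinitely many simple $(-2)$-roots, so that it qualifies as a Borcherds lattice in the sense of Definition~\ref{definition:Borcherds.lattice}. Since $W$ is not a root overlattice, $\rk W>\rk W_{root}$, so by Proposition~\ref{prop:generalization.Shioda.Tate} the stabilizer $\Stabe$ is infinite; hence $\Aut(\mathcal{D}_L)$ is infinite, and since $L$ contains the $(-2)$-root $\vect{f}-\vect{e}=\vect{r}_\vect{0}$, Proposition~\ref{prop:finitely.many.roots} shows it has infinitely many simple $(-2)$-roots. Combined with the previous paragraph, $L=U\oplus W$ is a Borcherds lattice, i.e.\ $W$ is a Leech type lattice. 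I expect the main obstacle to be the bounded-height claim: the covering-radius estimate is routine, but upgrading ``every root of height $\ge 2$ can be lowered'' to ``no simple root has height $\ge 2$'' requires the careful case analysis of Conway's original proof, in particular to handle reflections that lower the height all the way to $0$ when $W$ itself contains $(-2)$-roots, and to confirm that $\vect{e}$ is genuinely a cusp of the chamber $\mathcal{D}_L$.
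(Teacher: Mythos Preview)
Your approach is the same as the paper's---adapt Conway's argument by producing, for each positive root $\vect{r}$ of height $h\ge 2$, a Leech root $\vect{r}_{\vect{w}}$ with $\vect{r}.\vect{r}_{\vect{w}}<0$ via the covering-radius bound---but you take an unnecessary detour at the end that creates exactly the gap you worry about. Once you have $\vect{r}.\vect{r}_{\vect{w}}\le -1$, the paper concludes \emph{directly}, without reflections or induction: compute
\[
(\vect{r}-\vect{r}_{\vect{w}})^2=-4-2(\vect{r}.\vect{r}_{\vect{w}})\ge -2
\qquad\text{and}\qquad
\vect{e}.(\vect{r}-\vect{r}_{\vect{w}})=h-1\ge 1.
\]
Hence $\vect{r}-\vect{r}_{\vect{w}}$ is positive: either it is a $(-2)$-root with strictly positive pairing against the fundamental vector $\vect{e}$ (so its negative cannot be positive), or it has nonnegative square and therefore lies in the closure of the positive cone. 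Since $\vect{r}_{\vect{w}}\neq\vect{r}$, this already shows that $\vect{r}$ is not simple. There is no need to track $s_{\vect{r}_{\vect{w}}}(\vect{r})$, no induction on height, and no height-$0$ case analysis; the complication you flag simply does not arise.

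Your concern about $\vect{e}$ being a cusp of $\mathcal{D}_L$ is likewise a non-issue. The chambers tile the closed positive cone, so the primitive isotropic vector $\vect{e}$ lies in the closure of some chamber; the paper just declares that chamber to be $\mathcal{D}_L$ (``without loss of generality, we may assume that $\vect{e}$ is fundamental''). The rest of your outline---infinitely many simple roots via Proposition~\ref{prop:generalization.Shioda.Tate} and Proposition~\ref{prop:finitely.many.roots}---matches the paper exactly.
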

\begin{proof}
We need to show that the hyperbolic lattice $L\coloneqq U\oplus W$ is a Borcherds lattice. Surely $L$ contains a $(-2)$-root, and $\Aut(\mathcal{D}_L)$ is infinite by Proposition \ref{prop:generalization.Shioda.Tate}. Let $\{\vect{e},\vect{f}\}$ be the basis of $U$ such that $\vect{e}^2=0$, $\vect{f}^2=-2$ and $\vect{e}.\vect{f}=1$. Without loss of generality, we may assume that $\vect{e}$ is a fundamental vector of $L$. We claim that for every simple $(-2)$-root $\vect{r}$ of $L$, the inner product $\vect{e}.\vect{r}$ is bounded from above by $1$. Equivalently, we claim that every positive $(-2)$-root $\vect{r}\in L$ with $\vect{e}.\vect{r} \ge 2$ is not simple.

Let $\vect{r}=x \vect{e} + y \vect{f} + \vect{z} \in L$ be a positive $(-2)$-root with $y=\vect{e}.\vect{r}\ge 2$. In order to show that $\vect{r}$ is not simple, we are going to exhibit a positive $(-2)$-root $\vect{r'}\in L$ with $\vect{e}.\vect{r'}=1$ such that $\vect{r}.\vect{r'} < 0$. This is sufficient because it implies that $\vect{r}-\vect{r'}$ is positive (since $(\vect{r}-\vect{r'})^2 \ge -2$ and $\vect{e}.(\vect{r}-\vect{r'})>0$), contradicting the simplicity of $\vect{r}$. 

Consider the vector $\frac{\vect{z}}{y}\in W\otimes \RR$. Since $W(-1)$ has covering radius $\le \sqrt{2}$, there exists a vector $\vect{z'}\in W$ such that $-\left(\frac{\vect{z}}{y}-\vect{z'}\right)^2 \le 2$. Let $x'\coloneqq -\frac{1}{2}\vect{z'}^2$. It is straightforward to check that $\vect{r'}\coloneqq x' \vect{e} + \vect{f} + \vect{z'}\in L$ is a positive $(-2)$-root with $\vect{e}.\vect{r'}=1$. We claim that $\vect{r}.\vect{r'}<0$. Indeed by Lemma \ref{lemma:computation} we have
\begin{equation*}
\begin{split}
    \vect{r}.\vect{r'} &=\frac{1}{y}\left( -\frac{1}{2}(y\vect{z'}-\vect{z})^2-y^2-1\right) 
    =y \left( -\frac{1}{2} \left(\frac{\vect{z}}{y}-\vect{z'}\right)^2-1-\frac{1}{y^2} \right) <\\
    &<\frac{y}{2} \left( - \left(\frac{\vect{z}}{y}-\vect{z'}\right)^2-2\right) \le 0,
\end{split}
\end{equation*}
as desired.
\end{proof}

\begin{proposition} \label{prop:genus.Leech.type.lattice}
Let $W$ be a Leech type lattice. The genus of $W$ contains precisely one lattice that is not a root overlattice.
\end{proposition}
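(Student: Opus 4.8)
The plan is to transport the statement, via Theorem~\ref{thm:equivalence.characterizations.Borcherds}, into the geometry of the fundamental domain of $L\coloneqq U\oplus W$. Since $W$ is of Leech type, $L$ is a Borcherds lattice, so by Theorem~\ref{thm:equivalence.characterizations.Borcherds}(c) the cone $\mathcal{D}_L$ has a unique cusp $\vect{e}_0$ with infinite stabilizer, and this cusp is fixed by all of $\Aut(\mathcal{D}_L)$. Set $W_0\coloneqq\vect{e}_0^\perp/\langle\vect{e}_0\rangle$. By Proposition~\ref{prop:generalization.Shioda.Tate}, the infinitude of $\Aut(\mathcal{D}_L,\vect{e}_0)$ is equivalent to $W_0$ not being a root overlattice, so $W_0$ is the natural candidate for the asserted non-root-overlattice. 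It then remains to show: (i) $W_0$ lies in the genus of $W$, and (ii) it is, up to isometry, the only non-root-overlattice in that genus.

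I would establish (ii) first, as it is the softer part. Let $W'$ be a lattice in the genus of $W$ that is not a root overlattice. Since $W'$ and $W$ are in the same genus, $U\oplus W'\cong U\oplus W=L$; letting $\vect{e}'\in L$ be the image of an isotropic vector spanning the summand $U\subseteq U\oplus W'$, we obtain a primitive isotropic vector of $L$ of divisor $1$ with $\vect{e}'^\perp/\langle\vect{e}'\rangle\cong W'$. After replacing $\vect{e}'$ by $-\vect{e}'$ we may assume $\vect{e}'\in\overline{\mathcal{P}}_L$, and then, applying a suitable element of $W(L)$ (which alters neither the divisor nor the isometry type of $\vect{e}'^\perp/\langle\vect{e}'\rangle$), we may assume $\vect{e}'$ is a cusp of $\mathcal{D}_L$. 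By Proposition~\ref{prop:generalization.Shioda.Tate}, $\Aut(\mathcal{D}_L,\vect{e}')$ is infinite because $W'$ is not a root overlattice; the uniqueness clause of Theorem~\ref{thm:equivalence.characterizations.Borcherds}(c) then forces $\vect{e}'=\vect{e}_0$, hence $W'\cong\vect{e}_0^\perp/\langle\vect{e}_0\rangle=W_0$.

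For (i) I would use Lemma~\ref{lemma:structure.lemma}: if the divisor $n_0$ of $\vect{e}_0$ in $L$ equals $1$, then the normalizations in \eqref{eq:matrix} force $k=0$ and $\underline{\ell}=0$, so that $L\cong U\oplus W_0$; since $U\oplus W_0\cong U\oplus W$, the lattices $W_0$ and $W$ lie in the same genus. Thus the whole proposition reduces to the claim that $n_0=1$, which I expect to be the heart of the matter. My approach: the class $\overline{\vect{e}_0}\coloneqq\tfrac1{n_0}\vect{e}_0+L\in A_L$ is an isotropic element of order $n_0$; since $W(L)$ acts trivially on the discriminant group, the action of $\Aut(\mathcal{D}_L)$ on $A_L$ factors through $\Or^+(L)$, and as $\Aut(\mathcal{D}_L)$ fixes $\vect{e}_0$ it fixes $\overline{\vect{e}_0}$. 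Because $L$ has an orthogonal summand $U$, the homomorphism $\Or(L)\to\Or(A_L)$ is surjective (cf.\ \cite{nikulin.integral.symmetric.bilinear}), so $\overline{\vect{e}_0}$ is fixed by a subgroup of index at most $2$ in $\Or(A_L)$; from this I would conclude $\overline{\vect{e}_0}=0$ — hence $n_0=1$ — by ruling out nonzero isotropic classes whose $\Or(A_L)$-orbit has size $\le 2$, if necessary going through the list of discriminant forms that can occur for Leech type lattices.

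The main obstacle is precisely this divisor computation; everything else — the two invocations of Proposition~\ref{prop:generalization.Shioda.Tate}, the appeal to Theorem~\ref{thm:equivalence.characterizations.Borcherds}, and the bookkeeping with the Weyl group and the structure lemma — is routine. Note that $n_0=1$ is genuinely needed: an infinite-stabilizer cusp of divisor $\ge 2$ has orthogonal quotient of strictly smaller discriminant than $W$, hence outside the genus of $W$, so such a cusp would leave the genus of $W$ with no non-root-overlattice at all; ruling this out is exactly the content that must be supplied.
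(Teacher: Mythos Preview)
Your uniqueness argument (ii) is exactly the paper's argument: two non-isometric non-root-overlattices $W_1,W_2$ in the genus of $W$ yield, via $U\oplus W_i\cong L$, two distinct cusps of $\mathcal{D}_L$ with infinite stabilizer, contradicting Theorem~\ref{thm:equivalence.characterizations.Borcherds}(c). So far so good.

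The divergence is in (i). You try to prove existence by showing that the distinguished cusp $\vect{e}_0$ has divisor $1$, and for this you propose a discriminant-group argument culminating in ``ruling out nonzero isotropic classes whose $\Or(A_L)$-orbit has size $\le 2$, if necessary going through the list of discriminant forms that can occur for Leech type lattices''. This is both incomplete and circular: there is no general reason a nonzero isotropic class cannot be fixed by (an index-$2$ subgroup of) $\Or(A_L)$, and appealing to the list of possible discriminant forms presupposes the very classification that this proposition is meant to feed into.

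The paper bypasses the divisor question entirely. Existence is obtained in one line from Nikulin's results \cite[Theorems~3.1.1~and~4.1.1]{nikulin.factor.groups}: if every lattice in the genus of $W$ were a root overlattice, then $U\oplus W$ would have finite symmetry group, contradicting the definition of a Leech type lattice. With existence in hand, your own argument (ii) finishes the proof. Note, incidentally, that $n_0=1$ is then a \emph{consequence}: the non-root-overlattice $W_1$ in the genus produces a divisor-$1$ cusp with infinite stabilizer, which by uniqueness is $\vect{e}_0$. So the fact you were trying to prove directly drops out for free once you invoke Nikulin, but it is not the right entry point.
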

\begin{proof}
If the genus of $W$ only contains root overlattices, then by \cite[Theorems~3.1.1~and~4.1.1]{nikulin.factor.groups} the lattice $U\oplus W$ has a finite symmetry group, so we may assume that the genus of $W$ contains at least one lattice that is not a root overlattice.
Assume by contradiction that the genus of $W$ contains two non-isometric lattices $W_1$ and $W_2$ that are not root overlattices. Since $L=U\oplus W$ is isometric to both $U\oplus W_1$ and $U\oplus W_2$, there are two cusps $\vect{e_1}, \vect{e_2}$ of $\mathcal{D}_L$ such that $\vect{e_i}^\perp /\langle \vect{e_i} \rangle \cong W_i$ for $i=1,2$. Since $W_1$ and $W_2$ are not root overlattices it follows that both $\vect{e_1}$ and $\vect{e_2}$ have infinite stabilizers by Proposition \ref{prop:generalization.Shioda.Tate}, contradicting Theorem \ref{thm:equivalence.characterizations.Borcherds}.
\end{proof}

As a consequence we only have two possibilities for a Leech type lattice $W$: either it is unique in its genus (and it is not a root overlattice), or its genus contains a root overlattice. We are going to treat the two cases separately.

\subsection{Root overlattices of Leech type} \label{sec:root.overlattices.Leech}

The first step towards Theorem \ref{thm:leech.classification} consists of a concrete computation. Since there are only finitely many root overlattices in each rank, we can list all root overlattices of Leech type of rank $\le 24$.

In order to decide whether a root overlattice $R$ is a Leech lattice, we proceed as follows. First, we check that the genus of $R$ contains at least one lattice that is not a root overlattice. In fact this is the case if and only if the lattice $U\oplus R$ has an infinite symmetry group by \cite[Theorems~3.1.1~and~4.1.1]{nikulin.factor.groups}, and Nikulin and Vinberg have classified the hyperbolic lattices with finite symmetry group. For the convenience of the reader, we list in Table \ref{tab:2.reflective.root.overlattices} the root overlattices $R$ such that $U\oplus R$ has finite symmetry group (they are called \emph{$2$-reflective lattices}).

Secondly, we compute the covering radius of $W(-1)$. By Proposition \ref{prop:covering.radius}, if the covering radius is at most $\sqrt{2}$, then $W$ is of Leech type. The (squares of the) covering radii of some Leech type lattices can be found in the third column of Table \ref{tab:root.overlattices.Leech.type}.

As a third step, we check whether there are two non-isometric lattices in the genus of $R$ that are not root overlattices. This can be done in a computationally fast way by looking at $2$-, $3$- and $5$-neighbors of $R$. If we are able to find two such neighbors, then $R$ is not of Leech type by Proposition \ref{prop:genus.Leech.type.lattice}.

As a fourth and final step, we use Borcherds' method to decide whether $U\oplus R$ is a Borcherds lattice, or equivalently if $R$ is of Leech type.

We are now in a position to prove that the list in Table \ref{tab:root.overlattices.Leech.type} is in fact complete.

\begin{proposition} \label{prop:no.leech.rank.25}
There are no root overlattices of Leech type of rank $\ge 25$.
\end{proposition}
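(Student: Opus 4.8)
The plan is to assume that a root overlattice $R$ of Leech type has rank $r\ge 25$ and derive a contradiction, by passing through the \emph{unique} non-root-overlattice in the genus of $R$ and then bounding its rank by means of Conway's theory of holes.

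\medskip
\textbf{Step 1: reduction to a non-root-overlattice.} Suppose $R$ is a root overlattice of Leech type with $r:=\rk(R)\ge 25$. Then $L:=U\oplus R$ is a Borcherds lattice, so by Proposition~\ref{prop:genus.Leech.type.lattice} the genus of $R$ contains exactly one lattice $W$ that is not a root overlattice, necessarily with $\rk(W)=r$. Since $U\oplus W\cong U\oplus R=L$, there is a cusp $\vect{e}$ of $\mathcal{D}_L$ with $\vect{e}^\perp/\langle\vect{e}\rangle\cong W$; because $W$ is not a root overlattice, Proposition~\ref{prop:generalization.Shioda.Tate} shows that $\Stabe$ is infinite, hence by Theorem~\ref{thm:equivalence.characterizations.Borcherds} and the subsequent Remark, $\vect{e}$ is the distinguished cusp of $L$: it is fixed by all of $\Aut(\mathcal{D}_L)$ and there is an $N$ with $\vect{e}.\vect{r}\le N$ for every simple $(-2)$-root $\vect{r}$ of $L$. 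Thus it suffices to prove that no Leech type lattice $W$ which is not a root overlattice has rank $\ge 25$; in fact I aim to prove $\rk(W)\le 24$.

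\medskip
\textbf{Step 2: holes of $W(-1)$ versus cusps of $\mathcal{D}_L$.} Fix a basis $\{\vect{e},\vect{f}\}$ of a copy of $U$ in $L$ with $\vect{e}^2=\vect{f}^2=0$ and $\vect{e}.\vect{f}=1$, so $L=U\oplus W$ with $\vect{e}$ distinguished. Running the computation underlying Proposition~\ref{prop:covering.radius}: by Lemma~\ref{lemma:computation}, a positive $(-2)$-root $\vect{r}=x\vect{e}+y\vect{f}+\vect{z}$ with $y=\vect{e}.\vect{r}\ge 2$ fails to be simple as soon as there is $\vect{z}'\in W$ with $-(\vect{z}/y-\vect{z}')^2\le 2$. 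Hence every simple root with $\vect{e}.\vect{r}=y\ge 2$ corresponds to a point $\vect{z}/y\in W\otimes\RR$ lying at distance $>\sqrt 2$ from $W$ in the metric of $W(-1)$, and the cusps $\vect{e}'\ne\vect{e}$ of $\mathcal{D}_L$ (which correspond to lattices $\vect{e}'^\perp/\langle\vect{e}'\rangle$ in the genus of $W$) sit at infinity along the deep holes of $W(-1)$ of depth $>\sqrt 2$. Since $L$ is Borcherds, each such $\vect{e}'$ has a finite stabilizer, so $\vect{e}'^\perp/\langle\vect{e}'\rangle$ is a root overlattice whose root part is, up to finite index, $R_{\vect{h}}\oplus W_{root}$, where $R_{\vect{h}}$ is the root system of the corresponding hole $\vect{h}$.

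\medskip
\textbf{Step 3: bounding the rank.} Let $T:=(W_{root})^\perp\subseteq W$ be the rootless part of $W$, of rank $r-\rk(W_{root})$. If $T(-1)$ had covering radius $>\sqrt 2$, a deepest hole of $T(-1)$ would extend in the $W_{root}$-directions to a hole of $W(-1)$ of depth $>\sqrt 2$ with root system exactly $W_{root}$; the associated cusp $\vect{e}'\ne\vect{e}$ would then be a root overlattice with root part of rank only $\rk(W_{root})<r$, contradicting Step~1 (every cusp $\ne\vect{e}$ has $\vect{e}'^\perp/\langle\vect{e}'\rangle$ of full rank $r$ and, being a root overlattice, with root part of rank $r$). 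So $T(-1)$ is root-free, even, and of covering radius $\le\sqrt 2$, whence by the theory of holes in the Leech lattice \cite[Chs.~22--25]{conway.sloane.lattices} we get $\rk(T)\le 24$, with equality only for $T\cong\Lambda$. It remains to bound $\rk(W_{root})$: choosing an adjacent cusp $\vect{e}'\ne\vect{e}$ of $\mathcal{D}_L$ (one exists since $\rk L\ge 3$) and feeding the resulting root overlattice back through Steps~2--3 (its rootless part again embeds into $\Lambda$) forces $W_{root}(-1)$ into the orthogonal complement of $T(-1)$ inside a Niemeier lattice, so $\rk(W_{root})\le 24-\rk(T)$ and $r=\rk(W_{root})+\rk(T)\le 24$, contradicting $r\ge 25$.

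\medskip
\textbf{Expected main obstacle.} The delicate point is Step~3: controlling all deep holes of $W(-1)$ simultaneously and turning the hole--cusp correspondence into the embedding $W_{root}(-1)\hookrightarrow T(-1)^\perp\subseteq(\text{Niemeier})$. In practice I expect one does not need this in full strength. After Steps~1--2 the possibilities for the pair $(W_{root},T)$, and hence for the genus of $W$, are finite in rank $25$: $T(-1)$ ranges over the finitely many root-free even lattices of covering radius $\le\sqrt 2$ of rank $\le 24$, and the discriminant form of $W$ constrains the gluing with $W_{root}$. One then finishes by treating the finitely many candidate genera in rank $25$ directly, using Borcherds' method or producing in each case either a second non-root-overlattice in the genus (contradicting Proposition~\ref{prop:genus.Leech.type.lattice}) or a hyperbolic isometry of $U\oplus W$ of positive entropy; the cases of rank $\ge 26$ then follow a fortiori, as the same argument again yields $\rk(W)\le 24$. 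A useful sanity check is $\mathrm{II}_{1,25}\oplus A_1=U\oplus(\Lambda\oplus A_1)$ in rank $27$: here $\Lambda\oplus A_1$ is the unique non-root-overlattice in its genus (which also contains the $24$ root overlattices $N\oplus A_1$, $N$ a Niemeier root lattice), yet one sees directly that it carries an isometry of positive entropy, so it is not of Leech type.
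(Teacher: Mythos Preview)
Your Step~3 contains a genuine gap. The assertion that an even, root-free, positive definite lattice of covering radius $\leq\sqrt 2$ has rank $\leq 24$ is not in \cite[Chs.~22--25]{conway.sloane.lattices}; those chapters classify the deep holes of the Leech lattice and say nothing about arbitrary $T$. In fact, by Proposition~\ref{prop:covering.radius} any such $T$ is itself of Leech type, so the bound you invoke is essentially part of the very classification this proposition is a step towards---the argument is circular. The hole-extension claim preceding it is also unjustified: $W$ is typically a proper overlattice of $W_{root}\oplus T$, and the glue vectors can fill in a deep hole of $T(-1)$, so a deepest hole of $T(-1)$ need not persist in $W(-1)$. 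Finally, the recursive ``feeding back'' of a root overlattice through Steps~2--3 yields nothing (a root overlattice has trivial rootless part), and no mechanism is given for the asserted embedding of $W_{root}$ into the orthogonal complement of $T$ inside a Niemeier lattice. Your fallback of treating finitely many candidate genera presupposes the finiteness you have not established.

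For comparison, the paper's proof avoids covering radii and hole theory altogether. After replacing $W$ by a maximal overlattice (still Leech type: rank $\geq 20$ forces infinite symmetry group, and Proposition~\ref{prop:entropy.overlattice} preserves zero entropy), one has $\ell(A_W)\leq 3$, so $r-\ell>16$ and $W$ lies in the genus of $E_8^2\oplus R$ for some $R$ of rank $\geq 9$ and length $\leq 3$. If the genus of $R$ contained a non-root-overlattice $M$, then $E_8^2\oplus M$ and $D_{16}^+\oplus M$ would be two non-isometric non-root-overlattices in the genus of $W$, contradicting Proposition~\ref{prop:genus.Leech.type.lattice}. Hence $U\oplus R$ has finite symmetry group, $R$ is one of the finitely many $2$-reflective root overlattices of rank $\geq 9$ in Table~\ref{tab:2.reflective.root.overlattices}, and each such $E_8^2\oplus R$ is eliminated by exhibiting two non-root-overlattices in its genus.
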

\begin{proof}
Let $W$ be a Leech type lattice of rank $r\ge 25$ and length $\ell=\ell(A_W)$. For any overlattice $W'$ of $W$, we have that $U\oplus W'$ has zero entropy by Proposition \ref{prop:entropy.overlattice}, since $U\oplus W$ has zero entropy by Theorem \ref{thm:equivalence.characterizations.Borcherds}. Moreover $U\oplus W'$ has infinite symmetry group, since by Nikulin's classification every hyperbolic lattice of rank $\ge 20$ has infinite symmetry group \cite{nikulin.finite.aut.greater.5}. Therefore $U\oplus W'$ is a Borcherds lattice as well, and up to substituting $W$ with one of its maximal overlattices, we may assume that $W$ has no non-trivial overlattices.
This implies that $\ell \le 3$ by \cite[Lemma~3.5.3]{kirschmer}. Since $r-\ell > 16$, then, up to substituting $W$ with another lattice in its genus, there exists a primitive embedding $E_8^2 \hookrightarrow W$. This follows essentially by \cite[Corollary~1.13.5]{nikulin.integral.symmetric.bilinear}: in fact there exists an embedding $U\oplus E_8^2 \hookrightarrow U\oplus W$, hence $W$ is in the same genus as $E_8^2\oplus R$ for a certain negative definite lattice $R$. It follows that $R$ is of rank $\ge 9$ and length $\ell\le 3$.

We claim that the genus of $R$ contains only root overlattices: indeed, if there is a non-root overlattice $M$ in the genus of $R$, then $W_1=E_8^2\oplus M$ and $W_2=D_{16}^+\oplus M$ are in the genus of $W$, they are not root overlattices and they are not isometric (for instance, the root parts have different discriminants). Here $D_{16}^+$ denotes the negative definite, unimodular lattice of rank $16$ with root part isometric to $D_{16}$. We deduce that $U\oplus R$ has finite symmetry group by \cite[Theorems~3.1.1~and~4.1.1]{nikulin.factor.groups}, and therefore $R$ is one of the root overlattices of rank $\ge 9$ in Table \ref{tab:2.reflective.root.overlattices}. We check that in all these cases each lattice $W=E_8^2\oplus R$  admits two distinct non-root overlattices in its genus.
Therefore is not of Leech type.
\end{proof}

\subsection{Leech type lattices unique in their genus} \label{sec:Leech.lattices.unique.genus}

In this section we assume instead that $W$ is unique in its genus. The \emph{scale} of a lattice is the greatest common divisor of the entries of its Gram matrix (with respect to any basis). By \cite{voight, lorch.kirschmer.single.class} we have a complete and finite list of (possibly odd) negative definite lattices unique in their genus of scale $1$. 
Since a lattice $W$ is unique in its genus if and only if all (or one of) its multiples is unique in its genus, we have an explicit list of negative definite lattices unique in their genus.

The following proposition, which relies on \cite[Theorem 4.6]{mezzedimi.entropy}, is the key to show that only a finite number of multiples of a given lattice can be of Leech type.

\begin{proposition} \label{prop:bound.multiples}
Let $W$ be a negative definite lattice of rank $r\ge 2$ and unique in its genus. The only multiples of $W$ that can be of Leech type are the $W(m)$ for $m \le N$, where $N>0$ is an explicit constant.
More precisely, $N$ can be computed as follows: Fix any primitive sublattice $T$ of $W$ of corank $1$, and set $N\coloneqq \max\{a,b\}$, where:
\begin{itemize}
    \item The constant $a>0$ is such that $T(m)$ is not of Leech type for any $m\ge a$;
    \item $b\coloneqq \lfloor \frac{2\disc(T)}{\disc(W)} \rfloor$.
\end{itemize}
\end{proposition}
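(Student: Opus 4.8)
The plan is to prove the (equivalent, slightly stronger) assertion that $W(m)$ is \emph{not} of Leech type for every $m>N=\max\{a,b\}$, i.e.\ that $U\oplus W(m)$ is then not a Borcherds lattice. The idea is to exhibit inside $U\oplus W(m)$ a primitive corank-$1$ hyperbolic sublattice on which positive entropy is already forced, and then to propagate positive entropy upward via \cite[Theorem~4.6]{mezzedimi.entropy}. The role of $b$ is to guarantee the discriminant hypothesis of that theorem, while $a$ records the information we already have about the lower-rank lattice $T$; the assumption that $W$ is unique in its genus (hence so is each $W(m)$) is what makes the phrase ``$W(m)$ is of Leech type'' unambiguous.

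Concretely, we would fix a primitive corank-$1$ sublattice $T\subseteq W$. Scaling by $m$, the lattice $T(m)$ remains a primitive corank-$1$ sublattice of $W(m)$, so $U\oplus T(m)$ embeds primitively in $U\oplus W(m)$ with torsion-free quotient $W(m)/T(m)\cong W/T$ and with $\rk(U\oplus T(m))=\rk(U\oplus W(m))-1$; both lattices are hyperbolic. Since $U$ is unimodular, $\disc(U\oplus W(m))=m^{\rk(W)}\disc(W)$ and $\disc(U\oplus T(m))=m^{\rk(W)-1}\disc(T)$, so
\[
\disc\bigl(U\oplus W(m)\bigr)>2\,\disc\bigl(U\oplus T(m)\bigr)\ \Longleftrightarrow\ m\,\disc(W)>2\disc(T)\ \Longleftrightarrow\ m>b,
\]
using $b=\lfloor 2\disc(T)/\disc(W)\rfloor$. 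Thus for $m>b$ the pair $U\oplus T(m)\subseteq U\oplus W(m)$ satisfies the discriminant bound required by \cite[Theorem~4.6]{mezzedimi.entropy}.

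Now take $m>N$. Since $m\ge a$, the lattice $T(m)$ is not of Leech type, i.e.\ $U\oplus T(m)$ is not a Borcherds lattice. The lattices $U\oplus T(m)$ are pairwise non-isometric (their discriminants strictly increase with $m$, as $\rk(W)\ge 2$) and each contains a $(-2)$-root coming from the summand $U$; since there are only finitely many hyperbolic lattices of rank $\ge 3$ with finite symmetry group \cite{nikulin.finite.aut.3,nikulin.finite.aut.greater.5,vinberg.finite.aut.4}, we may and do choose $a$ so that $\Aut(\mathcal{D}_{U\oplus T(m)})$ is infinite for every $m\ge a$. A hyperbolic lattice with infinite symmetry group that is not a Borcherds lattice has positive entropy by Theorem~\ref{thm:equivalence.characterizations.Borcherds}, so $U\oplus T(m)$ has positive entropy; since $m>b$, the previous paragraph then lets us apply \cite[Theorem~4.6]{mezzedimi.entropy} to conclude that $U\oplus W(m)$ has positive entropy as well, hence is not a Borcherds lattice, hence $W(m)$ is not of Leech type. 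Since $m>N$ was arbitrary, only the $W(m)$ with $m\le N$ can be of Leech type.

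The main obstacle will be the interface with \cite[Theorem~4.6]{mezzedimi.entropy}: one must check that it genuinely applies to the corank-$1$ primitive pair $U\oplus T(m)\subseteq U\oplus W(m)$ and that its discriminant hypothesis is matched \emph{exactly} by the clean condition $m>b$, so that the constant $2$ and the floor in the definition of $b$ are the correct bookkeeping. A secondary point is to show that an effective $a$ really exists for $T$: this is not literally the present proposition applied to $T$, since $T$ itself need not be unique in its genus, so it must be organized as an induction on the rank, with the rank-$1$ (and rank-$2$) base cases furnished by the classifications of hyperbolic lattices with finite symmetry group and of low-rank hyperbolic lattices of zero entropy, and with the threshold past which $U\oplus T(m)$ acquires an infinite symmetry group absorbed into $a$ along the way.
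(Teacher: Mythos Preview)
Your approach is essentially the paper's: apply \cite[Theorem~4.6]{mezzedimi.entropy} to the primitive sublattice $T(m)\subseteq W(m)$, with the discriminant inequality reducing to $m>b$ exactly as you compute. One point you flag as an ``obstacle'' but do not resolve is the remaining hypothesis of \cite[Theorem~4.6]{mezzedimi.entropy}: the paper checks explicitly that $W(m)$ contains no $(-2)$-root, which holds because $m>N\ge a\ge 1$ forces $m\ge 2$. You should state this, since it is the reason the theorem applies at all and is why the case $m=1$ must be handled separately.

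Conversely, your discussion of why $U\oplus T(m)$ genuinely has \emph{positive entropy} (rather than merely failing to be Borcherds, which could happen via a finite symmetry group) is more careful than the paper's, which simply asserts it ``by assumption''. Your device of enlarging $a$ so that $\Aut(\mathcal{D}_{U\oplus T(m)})$ is infinite for all $m\ge a$---legitimate because only finitely many hyperbolic lattices of rank $\ge 3$ have finite symmetry group and the $U\oplus T(m)$ are pairwise non-isometric---cleanly fills this small gap.
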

\begin{proof}
Fix any $m>N$. Since $a\ge 1$ and $m>N\ge a$, necessarily $m$ is at least $2$.
We need to show that $U\oplus W(m)$ is not a Borcherds lattice, or equivalently that it has positive entropy by Theorem \ref{thm:equivalence.characterizations.Borcherds}.
Notice that the lattice $U\oplus W(m)$ satisfies the assumptions in \cite[Theorem 4.6]{mezzedimi.entropy}. Indeed, since $m\ge 2$, the lattice $W(m)$ has no $(-2)$-roots.
Moreover consider the primitive sublattice $T(m)$ of $W(m)$. By assumption the hyperbolic lattice $U\oplus T(m)$ has positive entropy, and moreover
\begin{equation*}
\begin{split}
    \disc(W(m)) &=m^r \disc(W) \ge (b+1)m^{r-1}\disc(W) > \left( \frac{2\disc(T)}{\disc(W)}\right)   m^{r-1}\disc(W) = \\
    &=2m^{r-1}\disc(T)=2\disc(T(m)),
\end{split}
\end{equation*}
so we conclude that $U\oplus W(m)$ has positive entropy by \cite[Theorem 4.6]{mezzedimi.entropy}.
\end{proof}

Proposition \ref{prop:bound.multiples} suggests a recursive approach, since for lattices of rank $r\ge 2$ the constant $N$ can be explicitly computed only if we already have a complete list of Leech type lattices of rank $r-1$. For this reason we need to deal with the case of rank $1$ first. The classification is as follows (see also \cite[Theorem~5.10]{mezzedimi.entropy}):

\begin{proposition}[{\cite[Theorem~3~and~the~subsequent~discussion]{nikulin.interesting}}]
The Leech type lattices of rank $1$ are those of the form $\langle -2k\rangle$ for $k\in \{2,3,4,5,7,9,13,25\}$.
\end{proposition}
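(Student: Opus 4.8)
The plan is to pass to the rank-$3$ hyperbolic lattice $L := U\oplus\langle-2k\rangle$, for which $\langle-2k\rangle$ is of Leech type exactly when $L$ is a Borcherds lattice, and to analyze it through Theorem~\ref{thm:equivalence.characterizations.Borcherds}. Write $\vect{e},\vect{f}$ for the standard basis of $U$ and $\vect{w}$ for a generator of $\langle-2k\rangle$, so $\vect{e}-\vect{f}$ is always a $(-2)$-root of $L$. The case $k=1$ is immediate: $\langle-2\rangle\cong A_1$ is a root lattice, so its genus consists only of root overlattices and $U\oplus A_1$ has finite symmetry group by \cite[Theorems~3.1.1~and~4.1.1]{nikulin.factor.groups}; thus $A_1$ is not of Leech type. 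So assume $k\ge2$. Then $\langle-2k\rangle$ has no $(-2)$-root, hence is not a root overlattice, and being of rank $1$ it is unique in its genus; by \cite[Theorems~3.1.1~and~4.1.1]{nikulin.factor.groups} the symmetry group of $L$ is infinite, and by Proposition~\ref{prop:generalization.Shioda.Tate} the cusp $\vect{e}$ with $\vect{e}^\perp/\langle\vect{e}\rangle\cong\langle-2k\rangle$ has infinite stabilizer. By Theorem~\ref{thm:equivalence.characterizations.Borcherds}, $L$ is a Borcherds lattice if and only if $\vect{e}$ is the \emph{unique} cusp of $\mathcal{D}_L$ with infinite stabilizer.

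Using Lemma~\ref{lemma:structure.lemma} together with the identity $\disc(\vect{v}^\perp/\langle\vect{v}\rangle)=\disc(L)/(\vect{v}.L)^2$ and the fact that $L$ is even, every cusp $\vect{v}$ of $\mathcal{D}_L$ has $\vect{v}^\perp/\langle\vect{v}\rangle\cong\langle-2k/c^2\rangle$ for an integer $c=\vect{v}.L$ with $c^2\mid k$, and by Proposition~\ref{prop:generalization.Shioda.Tate} such a cusp has finite stabilizer precisely when $k/c^2=1$, i.e.\ $c^2=k$. So the claim reduces to: $\mathcal{D}_L$ contains a single cusp $\vect{v}$ with $c^2\neq k$ exactly for $k\in\{2,3,4,5,7,9,13,25\}$ — which one recognizes as the statement that $k-1$ divides $24$.

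Since $\rk L=3$, the chamber $\mathcal{D}_L$ is a polygon in the hyperbolic plane $\mathbb{H}_L$, so what remains is two-dimensional. For $k\le4$ it is settled outright: $\langle-2k\rangle(-1)=\langle 2k\rangle$ has covering radius $\tfrac12\sqrt{2k}\le\sqrt2$, so $\langle-2k\rangle$ is of Leech type by Proposition~\ref{prop:covering.radius}, accounting for $k=2,3,4$. For the other values I would first bound $k$ so as to be left with finitely many candidates: the simple $(-2)$-roots $\vect{r}$ of $L$ with $\vect{e}.\vect{r}=y$ can be written $\vect{r}=x\vect{e}+y\vect{f}+\gamma\vect{w}$ with $xy=k\gamma^2-1$, and a refinement of the Proposition~\ref{prop:covering.radius}-style argument (producing lower-level roots to subtract, via Lemma~\ref{lemma:computation}) can be used to show that once $k$ exceeds an explicit bound no such root is simple for large $y$, so that $\vect{e}$ has bounded inner product with the simple roots only for small $k$; after this reduction each surviving $U\oplus\langle-2k\rangle$ is handled individually by running Vinberg's algorithm and reading off the cusps of $\mathcal{D}_L$ and their stabilizers. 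More conceptually, one can use that lattices of signature $(1,2)$ are governed by arithmetic Fuchsian groups of $\mathbb{Q}$-rank one: under this dictionary $\Or^+(L)$ is commensurable with a congruence group such as $\Gamma_0(k)$ and its Atkin--Lehner extension, the cusps of $\mathcal{D}_L$ become the cusps of an associated modular-type curve, and the nonexistence of a second relevant cusp becomes a finite arithmetic condition out of which $k-1\mid24$ emerges (with $24$ entering through the cusp combinatorics of $\Gamma_0(k)$). This is essentially Nikulin's route in \cite{nikulin.interesting}; see also \cite[Theorem~5.10]{mezzedimi.entropy}.

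The main obstacle is exactly this uniform treatment of the infinitely many excluded $k\ge5$: the covering-radius criterion pins down only $k\le4$ on the nose, so one must separately check that $k\in\{5,7,9,13,25\}$ are of Leech type — here $\vect{e}$ ends up having bounded but no longer constant inner products with the simple $(-2)$-roots, so the clean ``all high roots are non-simple'' argument needs a more careful version — while producing, for every other $k\ge5$, a second cusp with infinite stabilizer, equivalently a hyperbolic isometry of $\mathcal{D}_L$, i.e.\ positive entropy. I would follow Nikulin's analysis for this arithmetic core.
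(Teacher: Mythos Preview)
The paper does not actually prove this proposition; it is stated with a citation to Nikulin \cite{nikulin.interesting} (and, implicitly via the surrounding text, to \cite[Theorem~5.10]{mezzedimi.entropy}). Your proposal is therefore already more detailed than what the paper offers. Your framework is correct and matches how the paper uses the result: you reduce to the criterion of Theorem~\ref{thm:equivalence.characterizations.Borcherds}(c) for $L=U\oplus\langle-2k\rangle$, observe via Proposition~\ref{prop:generalization.Shioda.Tate} that a cusp $\vect{v}$ has infinite stabilizer exactly when $(\vect{v}.L)^2\neq k$, dispose of $k\le 4$ via Proposition~\ref{prop:covering.radius}, and identify the remaining work as cusp-counting in a signature-$(1,2)$ lattice, equivalently in an arithmetic Fuchsian group commensurable with $\Gamma_0(k)$. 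Ultimately you defer this arithmetic core to Nikulin, which is exactly what the paper does.

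Two small points. First, one sentence reads as inverted: ``once $k$ exceeds an explicit bound no such root is simple for large $y$'' would, as written, make $\vect{e}$ have bounded pairing with all simple roots for \emph{large} $k$, i.e.\ make $L$ Borcherds for large $k$ --- the opposite of your stated conclusion. You presumably mean that the covering-radius-style subtraction argument (showing high-level roots are non-simple) only goes through for $k$ below an explicit bound, leaving finitely many candidates. Second, the reformulation ``$k-1\mid 24$'' is an observation the paper records in a remark (attributed to Roulleau) rather than a proof device; neither you nor the paper explains \emph{why} this divisibility is equivalent to the cusp condition, so invoking it does not shorten the argument.
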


\begin{remark}
As noted by X. Roulleau, the list of $k\ge 2$ for which $\langle -2k\rangle$ is of Leech type coincides with the list of $k\ge 2$ such that $k-1$ divides $24$. At the moment we do not have any explanation for this phenomenon.
\end{remark}

We now have all the necessary ingredients to complete the classification of Leech type lattices. By the classification in \cite{lorch.kirschmer.single.class}, negative definite lattices unique in their genus have rank $\le 10$. Therefore, for each $2\le r \le 10$, we recursively list all Leech type lattices that are unique in their genus as follows.
We take the (finite) list of negative definite lattices of rank $r$ and scale $1$ that are unique in their genus (if a lattice is odd, we just multiply it by $2$). Since we already have a complete list of Leech type lattices of rank $r-1$, we use Proposition \ref{prop:bound.multiples} to find, for each lattice $W$, a constant $N_W$ such that $W(m)$ is not of Leech type for any $m> N_W$. This produces a finite list of lattices, and we employ the same strategy as in Section \ref{sec:root.overlattices.Leech} in order to single out the Leech type lattices among these.
This concludes the proof of Theorem \ref{thm:leech.classification}.

\subsection{Independence of the Generalized Riemann Hypothesis} \label{sec:GRH}

As seen in Section \ref{sec:Leech.lattices.unique.genus}, our classification of Leech type lattices uses the classification of definite lattices unique in their genus, which in turn depends on the Generalized Riemann Hypothesis (GRH) (cf. \cite{voight}). More precisely, there could be an extra definite lattice of rank $2$ unique in its genus (but its discriminant must be very big).
We explain in this section how to avoid the classification of definite lattices unique in their genus in rank $2$, and make all our statements independent of the GRH.

In this section $W$ will be a negative definite lattice of rank $2$, and more precisely
$$
W=\begin{pmatrix}
-2k_1 &a\\
a &-2k_2
\end{pmatrix}
$$
with $k_1\ge k_2\ge a\ge 0$ (this can be achieved up to isometry of $L$). In order to find a way around the GRH, we need to prove that $W$ is not of Leech type if $\disc(W)$ is big enough. This is done in \cite[Theorem~6.1]{mezzedimi.entropy} in the case that $k_1\ge k_2\ge 2$. The main point is that $\disc(W)\ge 4k_2$, hence every fundamental isotropic vector on $U\oplus \langle -2k_1\rangle$ or $U\oplus \langle -2k_2\rangle$ extends to a fundamental isotropic vector on $L=U\oplus W$. In particular $L$ has positive entropy as soon as one of $k_1$ and $k_2$ does not belong to $\{2,3,4,5,7,9,13,25\}$.

It remains to consider the case $k_2=1$. This case was not treated in \cite{mezzedimi.entropy}, and the previous approach fails, since $\disc(W)=4k_1-a^2$ can be less than $4k_1$.
We fix the following notation: $\{\vect{e},\vect{f}\}$ is a basis of $U$ such that $\vect{e}^2=0$, $\vect{f}^2=-2$ and $\vect{e}.\vect{f}=1$, $\{\vect{w_1},\vect{w_2}\}$ is a basis of $W$ whose associated Gram matrix is $\begin{pmatrix}
-2k &a\\
a &-2
\end{pmatrix}$, and we consider the rank $1$ lattice $\langle -2k\rangle$ as the primitive sublattice of $W$ generated by $\vect{w_1}$. Without loss of generality, we may assume that $\vect{w_2}$ is a positive $(-2)$-root of $L$.

\begin{proposition}
If $k\notin \{2,3,4,5,7,9,13,25\}$, then $W$ is not a lattice of Leech type.
\end{proposition}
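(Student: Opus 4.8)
The plan is to show that the hyperbolic lattice $L:=U\oplus W$ is not a Borcherds lattice, and the strategy is to exhibit two distinct cusps of $\mathcal{D}_L$ with infinite stabilizer, which is forbidden by Theorem \ref{thm:equivalence.characterizations.Borcherds}(c). Throughout I write $k:=k_1$ and use that the hypothesis $k_1\ge k_2=1\ge a\ge 0$ forces $a\in\{0,1\}$. First I would dispose of the case $k=1$: then $W$ is a rank-$2$ root lattice, isometric to $A_1^2$ if $a=0$ and to $A_2$ if $a=1$, so its genus contains no lattice other than a root overlattice and $W$ is not of Leech type by Proposition \ref{prop:genus.Leech.type.lattice}. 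Hence we may assume $k\ge 2$; then $\disc(W)=4k-a^2\ge 7$, the only positive $(-2)$-root of $W$ is $\vect{w_2}$, and $W$ is not a root overlattice (the rank-$2$ root overlattices $A_1^2$ and $A_2$ have discriminant $4$ and $3$). Consequently, for the cusp $\vect e$ of $\mathcal{D}_L$ coming from the summand $U$ one has $\vect e^\perp/\langle\vect e\rangle\cong W$, and Proposition \ref{prop:generalization.Shioda.Tate} shows that $\Aut(\mathcal{D}_L,\vect e)$ is infinite. So one cusp is in hand, and the task is to find a second.

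The second cusp should come from the primitive rank-$1$ sublattice $\langle\vect{w_1}\rangle\cong\langle -2k\rangle$ of $W$, which gives a primitive sublattice $L_0:=U\oplus\langle\vect{w_1}\rangle\cong U\oplus\langle -2k\rangle$ of $L$ with $L_0^\perp=\langle\vect{w_1}\rangle^\perp\cap W$ of rank $1$. Since $k\ge 2$ and $k\notin\{2,3,4,5,7,9,13,25\}$, the lattice $\langle -2k\rangle$ is not of Leech type; on the other hand $L_0$ has an infinite symmetry group (it carries an Eichler transformation of infinite order, as in the proof of Proposition \ref{prop:generalization.Shioda.Tate}), hence infinitely many simple $(-2)$-roots. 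By Theorem \ref{thm:equivalence.characterizations.Borcherds} the lattice $L_0$ therefore has at least two cusps with infinite stabilizer; fix one, say $\vect c$, that is not $\Or^+(L_0)$-equivalent to $\vect e$. Then $\vect c^\perp_{L_0}/\langle\vect c\rangle\cong\langle -2m\rangle$ for some $m\ge 2$ (it is rank $1$, negative definite, and has no $(-2)$-root). Viewing $\vect c$ as an isotropic vector of $L$ and using the finite-index inclusion $L_0\oplus L_0^\perp\subseteq L$, one finds that $\vect c^\perp_L/\langle\vect c\rangle$ is a rank-$2$ overlattice of $\langle -2m\rangle\oplus L_0^\perp$; a direct estimate using $m\ge 2$, $k\ge 2$ and $\disc(L_0^\perp)\in\{2,\,2k(4k-1)\}$ (for $a=0$ and $a=1$ respectively) shows that $\disc(\vect c^\perp_L/\langle\vect c\rangle)\ge 7$. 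In particular this quotient is not isometric to $A_1^2$ or $A_2$, hence not a root overlattice, so $\Aut(\mathcal{D}_L,\vect c)$ is infinite by Proposition \ref{prop:generalization.Shioda.Tate}.

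It remains to see that $\vect c$ and $\vect e$ are distinct cusps of $\mathcal{D}_L$, i.e. inequivalent under $\Or^+(L)$. When $a=0$ this is straightforward: here $L=L_0\oplus A_1$ is an honest orthogonal direct sum, so the divisor of $\vect c$ in $L$ equals its divisor in $L_0$, and $\vect c^\perp_L/\langle\vect c\rangle\cong\langle -2m\rangle\oplus A_1$; if $\vect c$ had divisor $1$ in $L_0$ then $m=k$, and by the transitivity of $\Or^+(L_0)$ on divisor-$1$ primitive isotropic vectors of $U\oplus\langle -2k\rangle$ (Eichler), $\vect c$ would be $\Or^+(L_0)$-equivalent to $\vect e$, contrary to the choice of $\vect c$; so $\vect c$ has divisor $\ge 2$ in $L_0$, hence in $L$, whereas $\vect e$ has divisor $1$, and the two are inequivalent. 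Thus for $a=0$ we obtain two distinct cusps with infinite stabilizer, and $L$ is not a Borcherds lattice.

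The case $a=1$ runs along the same lines and is settled whenever $W$ is not unique in its genus (then any lattice in the genus of $W$ distinct from $W$ already yields a second divisor-$1$ cusp with infinite stabilizer, as $\disc(W)\ge 7$ rules out root overlattices in the genus). The remaining possibility — $a=1$ and $W$ unique in its genus — is the main obstacle: here $\disc(W)=4k-1$ is odd, and pushing $\vect c$ from $L_0$ into the index-$2k$ overlattice $L$ may lower its divisor to $1$, so the divisor test no longer separates $\vect c$ from $\vect e$. I would handle this residual case by a finer construction producing a cusp of $\mathcal{D}_L$ of divisor $\ge 2$ with infinite stabilizer (using that $\langle -2(4k-1)\rangle$, the orthogonal complement of the root $\vect{w_2}$ inside $W$, is again not of Leech type), or, failing that, by constructing via Lemma \ref{lemma:computation} an explicit infinite family of simple $(-2)$-roots with unbounded inner product with $\vect e$, in the spirit of the proof of Proposition \ref{prop:covering.radius}. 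Since this residual case includes lattices whose discriminant lies outside the range covered by the GRH-dependent classification of definite lattices unique in their genus, it must be dealt with by a uniform argument rather than a finite computation, and this is where I expect the real work to lie.
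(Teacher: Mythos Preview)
Your overall strategy --- find a second cusp of $\mathcal{D}_L$ with infinite stabilizer and thereby contradict Theorem~\ref{thm:equivalence.characterizations.Borcherds}(c) --- matches the paper's. But your execution has a genuine gap already in the case $a=0$, not just in the residual case you flag.

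You write: ``fix one, say $\vect{c}$, that is not $\Or^+(L_0)$-equivalent to $\vect{e}$''. Such a $\vect{c}$ need not exist. Theorem~\ref{thm:equivalence.characterizations.Borcherds}(c) gives you two distinct \emph{cusps} of $\mathcal{D}_{L_0}$ with infinite stabilizer, not two $\Or^+(L_0)$-orbits of primitive isotropic vectors. Concretely, take $k=6$: in $L_0=U\oplus\langle -12\rangle$ every primitive isotropic vector has divisor $1$ (an elementary check: if $2\mid\alpha,\beta$ or $3\mid\alpha,\beta$ in $\alpha\beta=6\gamma^2$, primitivity fails), hence $\vect{v}^\perp/\langle\vect{v}\rangle\cong\langle -12\rangle$ for every such $\vect{v}$, and all primitive isotropic vectors lie in a single $\Or^+(L_0)$-orbit. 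So your divisor argument collapses: you cannot force $\vect{c}$ to have divisor $\geq 2$, and then you have no way to separate $\vect{c}$ from $\vect{e}$ inside $\mathcal{D}_L$. Worse, if $L$ \emph{were} a Borcherds lattice, then any $g\in\Or^+(L_0)\subseteq\Or^+(L)$ sending $\vect{e}$ to $\vect{c}$ would decompose as $g=wh$ with $w\in W(L)$ and $h\in\Aut(\mathcal{D}_L)=\Aut(\mathcal{D}_L,\vect{e})$, forcing $\vect{c}=w(\vect{e})$ --- so $\vect{c}$ and $\vect{e}$ would be the \emph{same} cusp of $\mathcal{D}_L$. Your abstract existence argument thus cannot produce a contradiction.

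The paper avoids this entirely by writing down an \emph{explicit} isotropic vector $\vect{v}\in L_0$ (of the shape $(p+q)\vect{e}+p\vect{f}+\vect{w_1}$ when $k=pq$ suitably, and a variant when $k$ is prime or a prime square) and verifying directly, via Lemma~\ref{lemma:computation}, that $\vect{v}\cdot\vect{r}\geq 0$ for every positive $(-2)$-root $\vect{r}$ of $L$ --- including the new roots coming from the $A_1$-summand $\vect{w_2}$, where one uses $\vect{v}\cdot\vect{w_2}=a\geq 0$. This makes $\vect{v}$ a genuine cusp of $\mathcal{D}_L$, visibly distinct from $\vect{e}$, and a short discriminant estimate gives the infinite stabilizer. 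The explicit construction is precisely what lets the argument handle both $a=0$ and $a=1$ uniformly, without any case analysis on whether $W$ is unique in its genus.
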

\begin{proof}
The idea is to construct a fundamental primitive isotropic vector $\vect{v}\in L_1 = U\oplus \langle -2k\rangle$ with infinite stabilizer, extend it to $L=U\oplus W$ and check that it remains fundamental with infinite stabilizer. We follow the construction in the proof of \cite[Proposition~5.7]{mezzedimi.entropy}.

Assume first that we can write $k=pq$ with $p<q$ and $p$ is the smallest prime number dividing $k$ (this can be achieved if $k$ is not a prime nor the square of a prime).
It is straightforward to check that $\vect{v}=(p+q)\vect{e}+p\vect{f}+\vect{w_1}\in L_1$ is primitive and isotropic. We claim that $\vect{v}$ is fundamental considered as a vector of $L$. Let $\vect{r}= x \vect{e}+y \vect{f}+z_1 \vect{w_1}+z_2 \vect{w_2}\in L$ be any positive $(-2)$-root. If $y=\vect{e}.\vect{r}=0$, then $\vect{r}$ is orthogonal to $\vect{e}$ and thus $\vect{r}=\vect{w_2}$. However $\vect{v}.\vect{w_2}= a \ge 0$, so we may assume that $y>0$.

It follows by Lemma \ref{lemma:computation} that $\vect{v}.\vect{r}=-\frac{1}{2}\vect{t}^2-p^2$ up to a positive constant, where $\vect{t}=(y -p z_1)\vect{w_1} -p z_2 \vect{w_2}$. It is straightforward to check that $-\vect{t}^2 \ge 2p^2$. Indeed, if $y-pz_1 \ne 0$, we use the fact that any vector in $W$ with nonzero first coordinate has norm $\le - 2k \le -p^2$. If instead $y-pz_1=0$, we just need to observe that $z_2\ne 0$ (since if $y=pz_1$ and $z_2=0$, then the equation $\vect{r}^2=-2$ reads $xy-y^2-kz_1^2=-1$, and $p$ divides the left-hand side, a contradiction).

Finally, we have to show that $\vect{v}$ has infinite stabilizer in $L$. Since $\vect{v}.\vect{e}=p$, we have that $\vect{v}.L$ is either $1$ or $p$. This means that we can extend $\vect{e}$ to a basis of $L$ whose associated Gram matrix is as in (\ref{eq:matrix}), with $n\in \{1,p\}$. In both cases $\disc(\vect{v}^\perp / \langle \vect{v}\rangle)=\frac{\disc(L)}{n^2}=\frac{4k-a^2}{n^2} \ge \frac{4p(p+1)-1}{n^2} \ge \frac{4p^2+1}{n^2}  > 4$ by assumption, and therefore $\vect{v}$ has infinite stabilizer by Proposition \ref{prop:generalization.Shioda.Tate} (since all root overlattices of rank $2$ have discriminant $\le 4$).

Assume instead that $k$ is either a prime or the square of a prime. By \cite[Lemma~5.8]{mezzedimi.entropy} we can find $q\ge 2$ such that $q^2<k$, $q\nmid k-1$ and $(p,q)=1$. A completely analogous argument shows that $\vect{v}=(q^2+k)\vect{e}+q^2\vect{f}+q\vect{w_1}\in L$ is a fundamental primitive isotropic vector with infinite stabilizer.
\end{proof}

The above discussion ensures that the only negative definite lattices of rank $2$ that can be of Leech type are those with Gram matrix $
W=\begin{pmatrix}
-2k_1 &a\\
a &-2k_2
\end{pmatrix}
$ and $k_1,k_2\in \{1,2,3,4,5,7,9,13,25\}$. In particular we can bypass the classification of definite lattices of rank $2$ unique in their genus, making our results independent of the GRH.

\subsection{The classification of Borcherds lattices}

In this section we tackle the main problem of the paper, namely the problem of classifying Borcherds lattices. In the previous sections we classified Leech type lattices, or equivalently Borcherds lattices that contain a copy of the hyperbolic plane $U$, and we will see now how to use that classification to obtain our main result:

\begin{theorem} \label{thm:classification.Borcherds.lattices}
There are $194$ Borcherds lattices up to isometry, and the list can be found in the ancillary file.
\end{theorem}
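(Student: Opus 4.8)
The plan is to reduce the classification of all Borcherds lattices to the classification of Leech type lattices (Theorem \ref{thm:leech.classification}), exploiting the fact that every Borcherds lattice sits as a sublattice of small index inside one containing a copy of $U$. More precisely, let $L$ be a Borcherds lattice and let $\vect{e}$ be its unique cusp with bounded inner product with all simple $(-2)$-roots; by the Remark following Theorem \ref{thm:equivalence.characterizations.Borcherds}, the symmetry group $\Aut(\mathcal{D}_L)$ fixes $\vect{e}$. Write $n = \vect{e}.L$ and choose a basis as in Lemma \ref{lemma:structure.lemma}, so the Gram matrix has the shape \eqref{eq:matrix} with $W = \vect{e}^\perp/\langle \vect{e}\rangle$ negative definite. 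The first key point is to bound $n$: if $n=1$ then $L \cong U \oplus W$ already and $W$ is a Leech type lattice, so we are done by Theorem \ref{thm:leech.classification}; if $n \ge 2$ one shows (this is the content of Proposition \ref{prop:Gram.matrix.Borcherds.lattices} referred to in the Strategy section) that $L$ embeds with small, explicitly bounded index into a second Borcherds lattice $L' = U \oplus W'$, where $W'$ is forced to be of Leech type by Proposition \ref{prop:entropy.overlattice} (overlattices of zero entropy lattices have zero entropy) together with the fact that having infinite symmetry group is preserved. Since rank and discriminant of $L$ are controlled by those of $L'$ and the index, this yields a \emph{finite} list of candidate lattices $L$.

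The second step is to make this finite list explicit. For each Leech type lattice $W' = \vect{e}^\perp/\langle\vect{e}\rangle$ of $L' = U\oplus W'$ from Theorem \ref{thm:leech.classification}, I would enumerate all sublattices $L \subseteq L'$ of index bounded by the constant from Proposition \ref{prop:Gram.matrix.Borcherds.lattices} that still contain a primitive isotropic vector mapping to a cusp, and normalize each via Lemma \ref{lemma:structure.lemma}. Equivalently, one enumerates the possible parameters $(n, k, \underline{\ell})$ in \eqref{eq:matrix} subject to $-n \le k < n$ and $0 \le \ell_i < n$ and to $n$ being bounded. This is a routine but substantial finite computation, producing a finite set of candidate hyperbolic lattices.

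The third and final step is to decide, for each candidate $L$ in the finite list, whether it is actually a Borcherds lattice. By Theorem \ref{thm:equivalence.characterizations.Borcherds} this amounts to checking that $\Aut(\mathcal{D}_L)$ is infinite and of zero entropy, equivalently that it fixes the cusp $\vect{e}$, equivalently that $\vect{e}$ has bounded inner product with all simple $(-2)$-roots. This is carried out algorithmically: one computes $\Aut(\mathcal{D}_L)$ and the $\Aut(\mathcal{D}_L)$-orbits of simple $(-2)$-roots via Borcherds' method (see Appendix \ref{sec:borcherds.method}), and checks whether the orbit representatives all have bounded inner product with a cusp. After removing duplicates up to isometry, one counts the survivors. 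The expected outcome is that, together with the $172$ Leech type lattices (one of which, $\mathrm{II}_{1,25}$, does not embed in the K3 lattice but is still a Borcherds lattice), one obtains exactly $194$ isometry classes.

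\textbf{Main obstacle.} The delicate point is the uniform bound on the index $[L':L]$ in the first step: one must rule out Borcherds lattices of large covolume-to-cusp-index ratio, i.e. show that a cusp of a Borcherds lattice cannot be too ``deep.'' This requires leveraging the boundedness of $\vect{e}.\vect{r}$ over all simple $(-2)$-roots to constrain the geometry of $\mathcal{D}_L$ near $\vect{e}$, and hence to bound $n = \vect{e}.L$ in terms of $W = \vect{e}^\perp/\langle\vect{e}\rangle$; once $n$ is bounded the rest is a finite search. The secondary difficulty is purely computational: carrying out Borcherds' method reliably on the full list of candidates, which is why the paper relies on the OSCAR implementation.
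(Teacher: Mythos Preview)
Your proposal follows essentially the same architecture as the paper's proof: reduce to Leech type lattices via Proposition~\ref{prop:Gram.matrix.Borcherds.lattices}, enumerate the finitely many candidates, and test each with Borcherds' method.

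Two points of imprecision are worth flagging. First, the bound on $n=\vect{e}.L$ is not obtained by ``leveraging the boundedness of $\vect{e}.\vect{r}$'' directly; rather, the paper uses the \emph{uniqueness} of the cusp with infinite stabilizer (characterization~(c) in Theorem~\ref{thm:equivalence.characterizations.Borcherds}). One constructs a second isotropic vector $\vect{v}$ and argues that since $\vect{v}$ cannot be a fundamental cusp with infinite stabilizer, some positive $(-2)$-root must pair negatively with it; this forces $k\equiv -1\pmod n$, and iterating the trick yields that $n$ divides the \emph{scale} of $W$. So the bound is sharper than a generic ``small index'': it is $n\mid \mathrm{scale}(W)$, and in particular $k=-1$ is forced, so there is no need to enumerate $k$ in your second step. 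Second, this scale condition drastically shrinks the search: only $31$ of the $172$ Leech type lattices have scale $>1$, and for each of those only divisors $n>1$ of the scale need be considered; the paper further cuts down the $n^{\rk W}$ choices of $\underline{\ell}$ by grouping them into genera before running Borcherds' method.
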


Let us state a few easy consequences of this explicit classification, which answer some questions raised by Borcherds in \cite{borcherds.leech}:

\begin{corollary}\hfill
\begin{enumerate}[(a)]
    \item Every Borcherds lattice embeds primitively into the unimodular lattice $\mathrm{II}_{1,25}$.
    \item The unimodular lattice $\mathrm{II}_{1,25}$ is the only Borcherds lattice of rank $\ge 19$. In particular every hyperbolic lattice of rank $\ge 20$ and not isometric to $\mathrm{II}_{1,25}$ has positive entropy.
    \item If $L$ is a hyperbolic lattice with a virtually solvable symmetry group, then $\Aut(\mathcal{D}_L)$ contains a subgroup of finite index isomorphic to $\ZZ^m$, with $m\le 24$.
\end{enumerate}
\end{corollary}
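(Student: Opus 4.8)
The plan is to deduce all three assertions from the explicit list produced in Theorem~\ref{thm:classification.Borcherds.lattices}, together with the general results already established. For a Borcherds lattice $L$ I keep the notation $\vect{e}$ for its distinguished cusp, $W=\vect{e}^\perp/\langle\vect{e}\rangle$ and $R$ for the root part of $W$; recall that $\rk(L)\le 26$, with equality only for $\mathrm{II}_{1,25}$ by Theorem~\ref{thm:main.intro}, and that by Proposition~\ref{prop:generalization.Shioda.Tate} the stabilizer $\Stabe$ contains a normal finite-index subgroup isomorphic to $\ZZ^m$ with $m=\rk(L)-2-\rk(R)$.

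For (a) I would first treat the Borcherds lattices containing a copy of $U$. Any such lattice splits as $L=U\oplus W$, the orthogonal complement of the unimodular sublattice $U$ being negative definite, and then $W$ is a Leech type lattice by definition. By the corollary following Theorem~\ref{thm:leech.classification}, $W$ embeds primitively into a negative definite unimodular lattice $N$ of rank $24$, i.e. the opposite of a Niemeier lattice; hence $L=U\oplus W\hookrightarrow U\oplus N$ is a primitive embedding, the cokernel being the torsion free group $N/W$, and $U\oplus N$ is an even unimodular lattice of signature $(1,25)$, therefore isometric to $\mathrm{II}_{1,25}$. For the remaining Borcherds lattices --- finitely many, and each an explicit finite-index sublattice of some $U\oplus W$ by Proposition~\ref{prop:Gram.matrix.Borcherds.lattices} --- I would establish a primitive embedding into $\mathrm{II}_{1,25}$ directly, using Nikulin's criterion \cite[Corollary~1.12.3 and Theorem~1.10.1]{nikulin.integral.symmetric.bilinear}: such an embedding exists exactly when there is an even negative definite lattice of rank $26-\rk(L)$ whose discriminant form is opposite to that of $L$, and its existence reduces to the conditions $\ell(A_L)\le 26-\rk(L)$ together with a local restriction in the case of equality, the signature obstruction being automatic (it amounts to $24\equiv 0\pmod 8$). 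This last verification is the laborious point of the corollary: these lattices should again all embed primitively into $\mathrm{II}_{1,25}$, but passing through the overlattice $U\oplus W$ destroys primitivity, so one is forced into a finite case-by-case numerical check once the list of Theorem~\ref{thm:classification.Borcherds.lattices} is available.

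For (b) I would simply inspect the ranks in the list: every Borcherds lattice has rank at most $18$ except $\mathrm{II}_{1,25}$, which has rank $26$. The second, ``in particular'' assertion then follows formally. A hyperbolic lattice $L$ of rank $\ge 20$ has infinite symmetry group by Nikulin's classification \cite{nikulin.finite.aut.greater.5}, so if it had zero entropy it would be a Borcherds lattice by the implication (b)$\Rightarrow$(a) of Theorem~\ref{thm:equivalence.characterizations.Borcherds}, hence isometric to $\mathrm{II}_{1,25}$; contrapositively, any hyperbolic lattice of rank $\ge 20$ not isometric to $\mathrm{II}_{1,25}$ has positive entropy.

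For (c) I would argue by cases on $G:=\Aut(\mathcal{D}_L)$. If $G$ is finite, take $m=0$. If $G$ is infinite, Proposition~\ref{prop:virtually.abelian.positive.entropy} leaves two possibilities: either $G$ is virtually cyclic, in which case $m=1$ works, or $L$ is a Borcherds lattice. In the latter case $G=\Stabe$ by Theorem~\ref{thm:equivalence.characterizations.Borcherds}(d), so by Proposition~\ref{prop:generalization.Shioda.Tate} the group $G$ has a finite-index subgroup isomorphic to $\ZZ^m$ with $m=\rk(L)-2-\rk(R)\le\rk(L)-2\le 24$, using $\rk(L)\le 26$. The bound $m\le 24$ is attained by $\mathrm{II}_{1,25}=U\oplus\Lambda$ at the Leech cusp, where $R=0$.
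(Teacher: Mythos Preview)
Your arguments for (b) and (c) are correct and match the paper's proof essentially verbatim.

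For (a) your approach is correct but more circuitous than the paper's. You split into two cases: for $L=U\oplus W$ you invoke the corollary to Theorem~\ref{thm:leech.classification} (primitive embedding of Leech type lattices into Niemeier lattices), and for the remaining Borcherds lattices you fall back on Nikulin's criterion with a case-by-case check. The paper instead handles all Borcherds lattices uniformly: one simply verifies from the explicit list that every Borcherds lattice $L\neq\mathrm{II}_{1,25}$ satisfies the \emph{strict} inequality $\rk(L)+\ell(A_L)<26$, and then \cite[Corollary~1.12.3]{nikulin.integral.symmetric.bilinear} gives the primitive embedding into $\mathrm{II}_{1,25}$ directly, with no local conditions to check. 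Note that your first case is not really a shortcut, since the Niemeier embedding of $W$ is itself established by the same kind of numerical check (indeed $\rk(L)+\ell(A_L)=\rk(W)+2+\ell(A_W)$ when $L=U\oplus W$, so the two verifications are equivalent). The paper's route is therefore both simpler and avoids the case distinction entirely.
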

\begin{proof}
For the first point, notice that for every Borcherds lattice $L\ne \mathrm{II}_{1,25}$ in the classification it holds $\rk(L)+\ell(A_L) < 26$. Hence by \cite[Corollary~1.12.3]{nikulin.integral.symmetric.bilinear} every Borcherds lattice admits an embedding into a unimodular lattice of signature $(1,25)$, and $\mathrm{II}_{1,25}$ is the unique such lattice up to isometry.

The second point follows from a direct inspection of the list of Borcherds lattices and from the fact that every hyperbolic lattice of rank $\ge 20$ has an infinite symmetry group \cite{nikulin.finite.aut.greater.5}.

Finally the last point follows from the fact that every Borcherds lattice has rank $\le 26$, by combining Theorem \ref{thm:equivalence.characterizations.Borcherds} and Propositions \ref{prop:generalization.Shioda.Tate} and \ref{prop:virtually.abelian.positive.entropy}.
\end{proof}

In the following $L$ is a Borcherds lattice, or equivalently a hyperbolic lattice of zero entropy with infinite automorphism group by Theorem \ref{thm:equivalence.characterizations.Borcherds}.
We start with a structure result for Borcherds lattices.

\begin{proposition} \label{prop:Gram.matrix.Borcherds.lattices}
Let $L$ be a Borcherds lattice. There exists a basis $\mathcal{B}=\{\vect{e},\vect{f},\vect{w_1},\ldots,\vect{w_r}\}$ of $L$ such that its Gram matrix is 
\begin{equation*}
\left(\begin{array}{cc|ccc}
0 &n &0 &\ldots &0\\
n &2k & &\underline{\ell}^T &\\
\hline
0 & & & &\\
\vdots &\underline{\ell} & &W &\\
0 & & & &
\end{array}\right)
\end{equation*}
as in (\ref{eq:matrix}) and such that:
\begin{enumerate}[(a)]
    \item $k=-1$;
    \item $0\le \ell_i \le n-1$ for every entry $\ell_i$ of $\underline{\ell}$;
    \item $W$ is a Leech type lattice and not a root overlattice;
    \item $n$ divides the scale of $W$.
\end{enumerate}
\end{proposition}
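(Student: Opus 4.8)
The plan is to use Theorem~\ref{thm:equivalence.characterizations.Borcherds} to single out the right cusp, to normalize the basis around it with Lemma~\ref{lemma:structure.lemma}, and finally to identify the negative definite block. By Theorem~\ref{thm:equivalence.characterizations.Borcherds} the cone $\mathcal{D}_L$ has a unique cusp $\vect{e}$ with infinite stabilizer $\Stabe$, and this $\vect{e}$ is fixed by all of $\Aut(\mathcal{D}_L)$ and has bounded inner product with every simple $(-2)$-root. Applying Lemma~\ref{lemma:structure.lemma} to $\vect{e}$ produces a basis $\mathcal{B}=\{\vect{e},\vect{f},\vect{w_1},\dots,\vect{w_r}\}$ with Gram matrix of the shape (\ref{eq:matrix}); condition~(b) and the identification of $W$ with the Gram matrix of $\vect{e}^\perp/\langle\vect{e}\rangle$ are part of that lemma, and since $\Stabe$ is infinite, Proposition~\ref{prop:generalization.Shioda.Tate} gives that $W$ is not a root overlattice. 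It remains to prove that $W$ is of Leech type and to establish (a) and (d).

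For the Leech type statement I would use an overlattice trick. Put $n=\vect{e}.L$; then $\vect{e}/n\in L^\vee$, so $L'\coloneqq L+\ZZ(\vect{e}/n)$ is an even overlattice of $L$, of index equal to the order $n$ of $\vect{e}/n$ in $A_L$. In $L'$ the vector $\vect{e}/n$ is a primitive isotropic vector of divisibility $1$, hence it splits off a copy of $U$, and a short computation of $(\vect{e}/n)^\perp_{L'}/\langle\vect{e}/n\rangle$ identifies it with $W$; thus $L'\cong U\oplus W$. Since $L$ is a Borcherds lattice it has zero entropy, hence so does its overlattice $L'=U\oplus W$ by Proposition~\ref{prop:entropy.overlattice}; as $W$ is not a root overlattice, $U\oplus W$ has infinite symmetry group by Proposition~\ref{prop:generalization.Shioda.Tate}; therefore $U\oplus W$ is a Borcherds lattice by Theorem~\ref{thm:equivalence.characterizations.Borcherds}, i.e.\ $W$ is of Leech type. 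This also realizes $L$ as the index-$n$ sublattice of $U\oplus W$ promised in the introduction.

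Next I would attack (d), which asks that $n$ divide every $\vect{w_i}.\vect{w_j}$, equivalently that $n$ divide the scale of $W=\vect{e}^\perp/\langle\vect{e}\rangle$. Here I would argue by contradiction inside the inclusion $L\subseteq L'=U\oplus W$, writing $\vect{e}=n\vect{u_1}$ for a standard generator $\vect{u_1}$ of $U$: if $n$ failed to divide the scale of $W$ one could construct a primitive isotropic vector of $L$ of divisibility strictly smaller than $n$ whose orthogonal–complement quotient is still not a root overlattice — using that a finite-index sublattice has at most as many roots as the ambient lattice, so that the property of not being a root overlattice is inherited — and hence produce a second cusp of $\mathcal{D}_L$ with infinite stabilizer, contradicting the uniqueness in Theorem~\ref{thm:equivalence.characterizations.Borcherds}. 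I expect this to be the main obstacle: unlike (b) and the non-root-overlattice property, (d) is not a formal consequence of lattice theory but genuinely uses that $\vect{e}$ is the distinguished cusp.

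Finally (a): granting (c) and (d), it suffices to find a $(-2)$-root $\vect{r}\in L$ with $\vect{e}.\vect{r}=n$ and take $\vect{f}=\vect{r}$. Inside $U\oplus W$ such a root has the form $a\vect{u_1}+\vect{u_2}+\vect{c}$ with $\vect{c}\in W$, and the requirements $\vect{r}^2=-2$ and $\vect{r}\in L$ reduce, after using (d) to kill the term $\vect{c}^2$ modulo $n$, to a single solvable congruence; alternatively one exhibits $\vect{r}$ directly from a point of $W\otimes\RR$ at bounded distance from $W$, in the spirit of the proof of Proposition~\ref{prop:covering.radius}. One then normalizes the basis: replacing $\vect{f}$ by $\vect{f}+j\vect{e}$ fixes $\vect{e}.\vect{f}$ and changes $\vect{f}^2$ by $2jn$, so $k$ can be set equal to $-1$, and a further substitution $\vect{w_i}\mapsto\vect{w_i}+\alpha_i\vect{e}$ restores $0\le\ell_i<n$, yielding the Gram matrix in the statement.
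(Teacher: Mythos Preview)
Your treatment of the setup, of (b), of ``$W$ is not a root overlattice'', and of (c) via the overlattice $L'=L+\ZZ(\vect{e}/n)\cong U\oplus W$ is correct and essentially matches the paper. The divergence, and the genuine gap, is in (a) and (d).

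For (d) you propose that if $n$ failed to divide some $\vect{w_i}.\vect{w_j}$ one could construct a primitive isotropic vector of $L$ of divisibility strictly less than $n$ whose orthogonal quotient is again not a root overlattice, contradicting uniqueness. But you give no construction, and nothing in the hypothesis $n\nmid\vect{w_i}.\vect{w_j}$ visibly hands you such a vector; the remark about roots of finite-index sublattices does not help produce one. Your (a) then rests on (d), and even granting (d) it is incomplete: writing a candidate root $\vect{r}=a\vect{u_1}+\vect{u_2}+\vect{c}\in L'$ and imposing $\vect{r}^2=-2$ and $\vect{r}\in L$, one finds the congruence $-1-\tfrac12\vect{c}^2\equiv k+\sum_i c_i\ell_i\pmod n$. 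Taking $\vect{c}=0$ is precisely the statement $k\equiv -1\pmod n$ you want, and for general $\vect{c}$ there is no reason the right-hand side should hit $-1-\tfrac12\vect{c}^2$ modulo $n$.

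The paper reverses your order: it proves (a) directly and then harvests (d). The key is the isotropic vector $\vect{v_0}=-k\vect{e}+n\vect{f}$. One checks via Lemma~\ref{lemma:computation} that the $(-2)$-roots orthogonal to $\vect{v_0}$ inject into $W_{root}$, so $\vect{v_0}^\perp/\langle\vect{v_0}\rangle$ is not a root overlattice; by uniqueness of the cusp with infinite stabilizer, the primitive multiple of $\vect{v_0}$ cannot be fundamental, hence some positive root $\vect{r}=x\vect{e}+y\vect{f}+\vect{z}$ has $\vect{v_0}.\vect{r}<0$. Lemma~\ref{lemma:computation} gives $\vect{v_0}.\vect{r}=\tfrac{n}{y}\bigl(-\tfrac12\vect{z}^2-1\bigr)$, forcing $\vect{z}=0$, and then $\vect{r}^2=-2$ yields $n\mid k+1$. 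For (d) one simply reruns this argument with $\vect{f}$ replaced by $\vect{f}+\vect{w}$: the new $k'=\tfrac12(\vect{f}+\vect{w})^2$ must again satisfy $k'\equiv -1\pmod n$, and specializing $\vect{w}=\pm\vect{w_i}$ and $\vect{w}=\vect{w_i}+\vect{w_j}$ gives $n\mid\vect{w_i}^2$ and $n\mid\vect{w_i}.\vect{w_j}$.
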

\begin{proof}
By Theorem \ref{thm:equivalence.characterizations.Borcherds} there exists a cusp $\vect{e}\in L$ with infinite stabilizer $\Stabe$, and by Lemma \ref{lemma:structure.lemma} we can find a basis $\mathcal{B}=\{\vect{e},\vect{f},\vect{w_1},\ldots,\vect{w_r}\}$ of $L$ whose associated Gram matrix is as in (\ref{eq:matrix}). By Proposition \ref{prop:generalization.Shioda.Tate} we have that $W\cong \vect{e}^\perp / \langle \vect{e} \rangle$ is not a root overlattice, since the stabilizer $\Stabe$ is infinite. We are going to show that the Gram matrix of $L$ satisfies the four conditions in the statement.

    \begin{enumerate}[(a)]
        \item Assume by contradiction that $k\ne -1$, and consider the isotropic vector $\vect{v_0}=-k\vect{e} + n\vect{f} \in L$. If $d=\gcd(k,n)$, the vector $\vect{v}=\vect{v_0}/d$ is also primitive. First we notice that $\vect{v}^\perp/\langle\vect{v}\rangle$ is not a root overlattice. Indeed by Lemma \ref{lemma:computation}, a $(-2)$-root $\vect{r}=x\vect{e}+y\vect{f}+\vect{z}\in L$ is orthogonal to $\vect{v}$ (or equivalently to $\vect{v_0}$) if and only if 
    $$-\frac{1}{2}(-n\vect{z})^2 - n^2 = 0, $$
    that is, if and only if $\vect{z}$ is a $(-2)$-root in $W$. Hence there is a homomorphism $(\vect{v}^\perp)_{root} \rightarrow W_{root}$ sending $\vect{r}$ to its component $\vect{z}\in W$. Let $\vect{r}=x\vect{e}+y\vect{f}+\vect{z}$ and $\vect{r'}=x'\vect{e}+y'\vect{f}+\vect{z}$ be $(-2)$-roots orthogonal to $\vect{v}$ with the same component with respect to $W$. The equations $\vect{v_0}.\vect{r}=\vect{v_0}.\vect{r'}=0$ read 
    $$n^2x+nky+n\underline{\ell}^T. \underline{z}=n^2x'+nky'+n\underline{\ell}^T. \underline{z}=0,$$
    so $nx+ky=nx'+ky'$. Therefore the $(-2)$-roots $\vect{r}$ and $\vect{r'}$ differ by a multiple of the primitive isotropic vector $\vect{v}$, and this shows that the homomorphism $(\vect{v}^\perp/\vect{v})_{root} \rightarrow W_{root}$ is injective. In particular $\vect{v}^\perp/\vect{v}$ is not a root overlattice.
    
    Now by assumption $\mathcal{D}_L$ contains a unique cusp with infinite stabilizer, namely $\vect{e}$, so by Proposition \ref{prop:generalization.Shioda.Tate} the vector $\vect{v}$ is not fundamental. Since $\vect{v}$ is positive, this implies that there exists a positive $(-2)$-root $\vect{r}=x\vect{e}+y\vect{f}+\vect{z}\in L$ such that $\vect{v}.\vect{r}<0$. Since $\vect{r}$ is positive, we have $y=\frac{1}{n}\vect{e}.\vect{r}>0$. 
    By Lemma \ref{lemma:computation} 
    $$\vect{v}.\vect{r} = \frac{n}{y}\left(-\frac{1}{2} \vect{z}^2 - 1\right),$$
    and since $\vect{v}.\vect{r} <0$, then necessarily $\vect{z}=\vect{0}$. Hence $-2 = \vect{r}^2 = 2nxy +2ky^2$, or equivalently $y(nx+ky)=-1$. It follows that $y=\pm 1$ and $nx+ky = nx\pm k = \mp 1$, that is $nx = \mp (k+1)$ and $n$ divides $k+1$.
    We deduce that $k\equiv -1 \pmod{n}$, and up to substituting $\vect{f}$ with $\vect{f}+\alpha\vect{e}$ for some $\alpha\in \ZZ$, we may assume that $k=-1$. 
    
    \item This follows from Lemma \ref{lemma:structure.lemma}.
    
    \item Consider the overlattice $M$ of $L$ spanned by $\{\vect{e}/n,\vect{f},\vect{w_1},\ldots,\vect{w_r}\}$. It is immediate to notice that the associated Gram matrix is as in (\ref{eq:matrix}) with $n=1$. As in point (2), we may assume up to isometry of $M$ that $\underline{\ell}=\underline{0}$, so $M$ is isometric to $U\oplus W$. Since by Proposition \ref{prop:entropy.overlattice} the hyperbolic lattice $M\cong U\oplus W$ has zero entropy, we conclude that $W$ is a Leech lattice.
    
    \item For any vector $\vect{w}\in W$ we can consider the basis $\{\vect{e},\vect{f}+\vect{w},\vect{w_1},\ldots,\vect{w_r}\}$ of $L$. The Gram matrix of $L$ with respect to this new basis is exactly as in (\ref{eq:matrix}), except for the value of $k$, which now equals $k'=\frac{1}{2}(\vect{f}+\vect{w})^2$. Reasoning as in point (1), we have that $k'\equiv -1 \pmod{n}$.
    
    Say $\vect{w_i}^2=-2k_i$. By choosing $\vect{w}=\pm \vect{w_i}$ we obtain
    $$\frac{1}{2}(\vect{f}\pm \vect{w_i})^2 \equiv -1 \pm \ell_i - k_i \equiv -1 \pmod{n},$$
    that is $\ell_i\equiv \pm k_i \pmod{n}$ for any $i$. Consequently $2k_i \equiv 0 \pmod{n}$, i.e. $n$ divides the diagonal entries of $W$.
    By choosing instead $\vect{w}=\vect{w_i}+\vect{w_j}$ we have similarly $\ell_i+\ell_j \equiv k_i + k_j - \vect{w_i}.\vect{w_j} \pmod{n}$, and therefore $\vect{w_i}.\vect{w_j} \equiv 0 \pmod{n}$. In other words, $n$ divides all the entries of the matrix $W$.
    \end{enumerate}
   
\end{proof}

Proposition \ref{prop:Gram.matrix.Borcherds.lattices} puts heavy restrictions on the Gram matrix of a Borcherds lattice $L$: indeed, 
if the scale of $W$ is $1$, then $L$ is isometric to $U\oplus L$.
Since we have already classified the Borcherds lattices containing a copy of $U$, we can assume that $W$ is a Leech type lattice of scale $>1$.

Among the root overlattices, there is only one Leech type lattice of scale $>1$, namely $A_1^9$, which has scale $2$. The unique lattice in its genus that is not a root overlattice is $E_8(2)\oplus A_1$. On the other hand, among the lattices unique in their genus there are $30$ Leech type lattices of scale $>1$, and Proposition \ref{prop:Gram.matrix.Borcherds.lattices} provides a straightforward strategy to classify the remaining Borcherds lattices, starting from these $31$ lattices.

Indeed let $W$ be one of the Leech type lattices of scale $c>1$. Following the notation of the matrix (\ref{eq:matrix}), by Proposition \ref{prop:Gram.matrix.Borcherds.lattices} we have that $k=-1$ and that $n$ is a divisor of $c$. Fix a divisor $n>1$ of $c$. Then again by Proposition \ref{prop:Gram.matrix.Borcherds.lattices} we just need to consider the $n^{\rk(W)}$ hyperbolic lattices with Gram matrix as in (\ref{eq:matrix}), corresponding to each possible vector $\underline{\ell}\in (\ZZ/n)^{\rk(W)}$, and decide which of them are Borcherds lattices.

We employ the following strategy to avoid unnecessary computations. After fixing $n>1$, many of the resulting $n^{\rk(W)}$ lattices are isometric. In order for two hyperbolic lattices $L_1$, $L_2$ to be isometric, it is sufficient that they are in the same genus and that $\ell(L_1)\le \rk(L_1)-2$ (since this last condition ensures that $L_1$ is unique in its genus by \cite[Corollary~1.13.3]{nikulin.integral.symmetric.bilinear}). For instance, in the case $W=E_8(2)\oplus A_1$ and $n=2$, there are only $5$ distinct genera corresponding to the different $\underline{\ell}\in (\ZZ/2)^{9}$, and if $\underline{\ell}\ne \underline{0}$, the length of the resulting hyperbolic lattice is $9$. This reduces the number of total hyperbolic lattices to consider from $2^9$ to $5$.

We apply Borcherds' method to decide whether the hyperbolic lattices resulting from the previous discussion are Borcherds lattices or not, and this completes the classification of Borcherds lattices.

\section{K3 surfaces of zero entropy} \label{sec:K3}

In this section we will apply the general results about hyperbolic lattices of zero entropy to the case of K3 surfaces. In the following $k=\overline{k}$ is an algebraically closed field of characteristic $p\ge 0$. 

A K3 surface is a smooth projective surface $X$ over $k$ with trivial canonical bundle $K_X=0$ and with $H^1(X,\mathcal{O}_X)=0$. The Picard group $\Pic(X)$ of $X$ is a finitely generated free $\ZZ$-module of rank $\rho(X)\le 20$ or $\rho(X)=22$, and by the Hodge index theorem it has the structure of a hyperbolic lattice. The rank $\rho(X)$ of the Picard group is called the \emph{Picard rank} of $X$. If the characteristic $p$ is zero, then $\rho(X)\le 20$ by Hodge theory, and $\Pic(X)$ admits a primitive embedding into the second cohomology group $H^2(X,\ZZ)$, which is an even unimodular lattice of signature $(3,19)$ \cite[Proposition~1.3.5]{huybrechts.K3}. In particular $H^2(X,\ZZ)$ is abstractly isometric to the lattice $U^3\oplus E_8^2$.
The K3 surfaces of Picard rank $22$, which can only exist in positive characteristic, are called \emph{supersingular}. 

For any automorphism $f\in \Aut(X)$ of the K3 surface $X$, we can consider its induced action $f^*$ on $L=\Pic(X)$, which naturally preserves the lattice structure on $\Pic(X)$ as well as the nef cone of $X$. The nef cone $\mathcal{D}_X=\mathcal{D}_L$ is a fundamental domain for the Weyl group. The homomorphism 
\[\Aut(X)\rightarrow \Aut(\mathcal{D}_L)\cong\Or^+(L)/W(L)\]
has finite kernel. Except for some supersingular K3 surfaces in characteristic $2$ and $3$ it is proven that it has finite cokernel too (see \cite[Theorem~15.2.6]{huybrechts.K3} for the case of characteristic $0$ and \cite[Theorem~6.1]{lieblich.maulik} for the case of odd characteristic). In this case the structure of the automorphism group of $X$ is determined up to finite index by the Picard lattice $L$. For instance, $\Aut(X)$ is finite (resp. virtually abelian or virtually solvable) if and only if the symmetry group $\Aut(\mathcal{D}_L)$ is finite (resp. virtually abelian or virtually solvable).

We define the \emph{entropy} $h(f)$ of an automorphism $f\in \Aut(X)$ as the entropy of the isometry $f^*\in \Aut(\mathcal{D}_L)$. Note that, if characteristic $p$ is zero, this definition coincides with the usual definition of entropy of an automorphism of a complex variety (cf. \cite[Théorème~2.1]{cantat.dynamics.K3.1} and the discussion in \cite{esnault.srinivas}). Since by Riemann-Roch the cusps of the nef cone correspond to genus one fibrations (i.e. elliptic or quasi-elliptic fibrations) on $X$, we have that an automorphism $f\in \Aut(X)$ has zero entropy if and only if either $f$ has finite order, or if $f$ preserves a genus one fibration  on $X$.

\begin{definition}
A K3 surface $X$ has \emph{zero entropy} if every automorphism of $X$ has zero entropy, or equivalently if every automorphism of infinite order preserves some genus one fibration on $X$. Otherwise we say that $X$ has \emph{positive entropy}.
\end{definition}
By definition a K3 surface has zero entropy if and only if its Picard lattice $\Pic(X)$ has zero entropy, or, equivalently by Theorem \ref{thm:classification.Borcherds.lattices}, if and only if either $X$ has finite automorphism group, or $\Pic(X)$ is a Borcherds lattice. Since the classification of K3 surfaces with finite automorphism group follows immediately from the classification of hyperbolic lattices with finite symmetry group due to Nikulin and Vinberg, we will assume in the rest of the section that the K3 surface $X$ has an infinite automorphism group.

K3 surfaces of zero entropy were previously studied by Nikulin in \cite{nikulin.preserves} and by the second author in \cite{mezzedimi.entropy}, where he obtained a partial classification of complex K3 surfaces of zero entropy. From our classification of Borcherds lattices we are now able to complete the classification of K3 surfaces of zero entropy in every characteristic.

We rephrase Theorem \ref{thm:classification.Borcherds.lattices} and Theorem\ref{thm:virtually.abelian} in the language of K3 surfaces. Recall that, if $|E|:X\rightarrow \PP^1$ is a genus one fibration on the K3 surface $X$, the Jacobian fibration $|JE|:JX\rightarrow \PP^1$ of $|E|$ is a \emph{Jacobian} genus one fibration (i.e. with a section) on another K3 surface $JX$. If $|E|$ already has a section, then $JX=X$ and $|JE|$ coincides with the genus one fibration $|E|$ itself. In any case, the stabilizer of $|E|$ in $\Aut(X)$ coincides up to a finite group with the Mordell-Weil group $\MW(JE)$ of the Jacobian fibration. We will call the rank of $\MW(JE)$ the \emph{Mordell-Weil rank} of the genus one fibration $|E|$.

\begin{theorem}[{cf. \cite[Theorem~1.6]{oguiso.entropy}}] \label{thm:equivalent.characterizations.K3}
Let $X$ be a K3 surface with infinite automorphism group. The following are equivalent:
\begin{enumerate}[(a)]
    \item $X$ has zero entropy;
    \item There exists a unique genus one fibration on $X$ whose Jacobian fibration has an infinite Mordell-Weil group;
    \item There exists a genus one fibration on $X$ preserved by all the automorphisms of $X$.
\end{enumerate}

Moreover, every K3 surface of zero entropy has a virtually abelian automorphism group. Conversely, every K3 surface with a virtually solvable automorphism group has zero entropy as soon as $\rho(X)\ge 5$.
\end{theorem}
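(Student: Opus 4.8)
The plan is to deduce everything from Theorems \ref{thm:equivalence.characterizations.Borcherds} and \ref{thm:virtually.abelian} applied to $L=\Pic(X)$, transferring between $X$ and $L$ through the dictionary recalled above. As the section assumes $\Aut(X)$ infinite, I would use three facts. (i) The natural map $\Aut(X)\to\Aut(\mathcal{D}_L)$ has finite kernel, so $\Aut(\mathcal{D}_L)$ is infinite and Theorem \ref{thm:equivalence.characterizations.Borcherds} applies to $L$; moreover $h(f)=h(f^*)$ by definition. (ii) By Riemann--Roch the cusps of $\mathcal{D}_L$ are exactly the classes of genus one fibrations on $X$, and a cusp $\vect{e}$ is fixed by $f^*$ if and only if $f$ preserves the corresponding fibration $|E|$. (iii) For such an $|E|$, the stabilizer of $|E|$ in $\Aut(X)$ agrees up to finite groups with $\MW(JE)$, and maps with finite kernel, and --- away from the exceptional supersingular surfaces in characteristic $2$ and $3$ --- finite cokernel, onto $\Stabe$; together with Proposition \ref{prop:generalization.Shioda.Tate} this gives that $\MW(JE)$ is infinite if and only if $\Stabe$ is infinite.

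Granting (i)--(iii), the three conditions fall out of Theorem \ref{thm:equivalence.characterizations.Borcherds}. By (i) and the finiteness of the cokernel, $X$ has zero entropy if and only if every symmetry of $L$ has zero entropy, i.e.\ $L$ has zero entropy; by that theorem this is equivalent to $L$ being a Borcherds lattice. Still by the theorem, $L$ is a Borcherds lattice if and only if $\mathcal{D}_L$ has a unique cusp with infinite stabilizer, which by (iii) is condition (b); and also if and only if there is a cusp $\vect{e}$ with $\Aut(\mathcal{D}_L)=\Stabe$, which by (i), (ii) and the finiteness of the cokernel says precisely that the genus one fibration $|E|$ is preserved by every automorphism of $X$, i.e.\ condition (c).

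For the final two assertions I would invoke Theorem \ref{thm:virtually.abelian}. If $X$ has zero entropy then $L$ is a Borcherds lattice, so $\Aut(\mathcal{D}_L)$ is virtually abelian; as $\Aut(X)$ is an extension of a subgroup of $\Aut(\mathcal{D}_L)$ by a finite group, and an extension of a finitely generated virtually abelian group by a finite group is again virtually abelian, $\Aut(X)$ is virtually abelian. Conversely, if $\rho(X)\ge 5$ and $\Aut(X)$ is (infinite and) virtually solvable, then $\Aut(\mathcal{D}_L)$ is infinite by (i) and, using the finiteness of the cokernel, virtually solvable; since $\rk(L)=\rho(X)\ge 5$, Theorem \ref{thm:virtually.abelian} forces $L$ to be a Borcherds lattice, hence $X$ has zero entropy.

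The one place where care is needed --- and which I expect to be the main obstacle --- is the finiteness of the cokernel of $\Aut(X)\to\Aut(\mathcal{D}_L)$: it holds in characteristic $0$ and in odd characteristic, but is open for certain supersingular K3 surfaces in characteristic $2$ and $3$. For those $\rho(X)=22$, and since by Theorem \ref{thm:classification.Borcherds.lattices} (and its corollary) the only Borcherds lattice of rank $\ge 19$ is $\mathrm{II}_{1,25}$, which has rank $26$, the lattice $L$ is not a Borcherds lattice; one must then argue directly that such an $X$ has positive entropy and non-virtually-solvable automorphism group, which needs an input beyond the lattice dictionary. Checking (ii) and (iii) rigorously --- in particular handling possibly sectionless or quasi-elliptic fibrations when relating $\Stabe$ to the Mordell--Weil group of the Jacobian --- should be routine but somewhat technical.
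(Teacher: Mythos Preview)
Your proposal is correct and matches the paper's approach: the paper presents this theorem as a direct rephrasing of Theorems~\ref{thm:equivalence.characterizations.Borcherds} and~\ref{thm:virtually.abelian} via the dictionary you spell out, and gives no separate proof.

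One remark on your last paragraph: the characteristic $2$, $3$ supersingular obstacle you flag can be dissolved rather than treated as a separate case. The point (visible in the paper's proof of Theorem~\ref{thm:main.theorem.K3}) is that Mordell--Weil translations always lift to $\Aut(X)$ without appeal to Torelli. Thus if $\Pic(X)$ is not a Borcherds lattice and has infinite symmetry group, there are two cusps $\vect{e},\vect{e'}$ with infinite stabilizer; the corresponding Mordell--Weil groups furnish parabolic automorphisms $g,g'\in\Aut(X)$ fixing $\vect{e}$ and $\vect{e'}$ respectively, and a ping--pong argument on these produces both an element of positive entropy and (for the ``moreover'' clause) a free subgroup of $\Aut(X)$. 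So none of (a)$\Leftrightarrow$(b)$\Leftrightarrow$(c), nor the two final assertions, actually requires the cokernel of $\Aut(X)\to\Aut(\mathcal{D}_L)$ to be finite. Your facts (i)--(iii) already contain what is needed; you just do not have to invoke finiteness of the cokernel at the points where you do.
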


Now Theorem \ref{thm:classification.Borcherds.lattices} provides a classification of K3 surfaces of zero entropy and infinite automorphism group, depending on their Picard lattice. We observe that all the Borcherds lattices, with the exception of $\mathrm{II}_{1,25}$, embed into the K3 lattice $U^3\oplus E_8^2$ by \cite[Corollary~1.12.3]{nikulin.integral.symmetric.bilinear}, since they all satisfy the condition $\rk(L)+\ell(A_L)\le 20$. Therefore the surjectivity of the period map ensures that there are K3 surfaces over $\CC$ with these Picard lattices. Their transcendental lattice can be easily computed as the orthogonal complement of $\Pic(X)$ in the K3 lattice.

Let us state the classification result for K3 surfaces of zero entropy. We will then provide some interesting consequences of this classification.

\begin{theorem} \label{thm:main.theorem.K3}
A K3 surface $X$ has zero entropy and infinite automorphism group if and only if its Picard lattice $\Pic(X)$ belongs to an explicit list of $193$ lattices.
\end{theorem}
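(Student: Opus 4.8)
The plan is to obtain Theorem~\ref{thm:main.theorem.K3} as a translation of the lattice-theoretic classification in Theorem~\ref{thm:classification.Borcherds.lattices} through the dictionary between a K3 surface and its Picard lattice. First I would assemble the chain of equivalences, valid for a K3 surface $X$ with infinite automorphism group: $X$ has zero entropy $\iff$ $\Pic(X)$ has zero entropy $\iff$ $\Pic(X)$ is a Borcherds lattice. The last equivalence is Theorem~\ref{thm:equivalence.characterizations.Borcherds}, and it applies because $\Aut(\mathcal{D}_{\Pic(X)})$ is infinite whenever $\Aut(X)$ is, the kernel of $\Aut(X)\to\Aut(\mathcal{D}_{\Pic(X)})$ being finite. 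The first equivalence is immediate from the definition of the entropy of $f\in\Aut(X)$ as the entropy of $f^*$, together with the fact that the image of $\Aut(X)$ in $\Aut(\mathcal{D}_{\Pic(X)})$ has finite index---true in characteristic $0$ by \cite[Theorem~15.2.6]{huybrechts.K3} and in odd characteristic by \cite[Theorem~6.1]{lieblich.maulik}, the remaining supersingular K3 surfaces in characteristic $2$ or $3$ being handled as explained below.

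Next I would feed in Theorem~\ref{thm:classification.Borcherds.lattices}: there are exactly $194$ Borcherds lattices up to isometry. Combined with the equivalences above, a K3 surface with infinite automorphism group has zero entropy if and only if its Picard lattice is one of these $194$ lattices, so it remains to determine which of them are Picard lattices of K3 surfaces. The lattice $\mathrm{II}_{1,25}$ has rank $26$, whereas $\rho(X)\le 22$ for every K3 surface, so it is never a Picard lattice; every other Borcherds lattice has rank at most $18$ (by the preceding corollary, $\mathrm{II}_{1,25}$ is the only Borcherds lattice of rank $\ge 19$), so none of them is the Picard lattice of a supersingular K3 surface, and this is exactly why the exceptional supersingular cases in characteristic $2$ and $3$ cause no trouble in the first paragraph: any $X$ relevant to the statement is automatically non-supersingular. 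Conversely each of the remaining $193$ lattices $L$ satisfies $\rk(L)+\ell(A_L)\le 20$, hence embeds primitively into the K3 lattice $U^3\oplus E_8^2$ by \cite[Corollary~1.12.3]{nikulin.integral.symmetric.bilinear}, and by the surjectivity of the period map there is a complex K3 surface $X$ with $\Pic(X)\cong L$; such an $X$ has infinite symmetry group, hence infinite $\Aut(X)$ (finite kernel and cokernel), and zero entropy by the equivalences above. This yields the list of $193=194-1$ lattices.

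The substantive input here is entirely upstream---the classification of Borcherds lattices (Theorem~\ref{thm:classification.Borcherds.lattices}) and the equivalence theorem (Theorem~\ref{thm:equivalence.characterizations.Borcherds})---so within the present argument nothing is hard; the only point requiring a moment's care is the supersingular caveat to the finite-cokernel statement, which is disposed of by the rank bound $\rk(L)\le 18$ for Borcherds lattices other than $\mathrm{II}_{1,25}$, equivalently by the fact that every supersingular K3 surface (Picard rank $22$) has positive entropy. The rest is bookkeeping: quote Theorem~\ref{thm:classification.Borcherds.lattices}, discard $\mathrm{II}_{1,25}$ on rank grounds, and verify realizability through the period map.
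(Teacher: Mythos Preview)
Your argument is essentially correct and follows the same reduction to the lattice classification as the paper. The one substantive difference lies in how you establish the equivalence ``$X$ has zero entropy and infinite $\Aut(X)$ if and only if $\Pic(X)$ is a Borcherds lattice''. You invoke the finite-cokernel form of Torelli; the paper deliberately avoids this and argues directly via Mordell--Weil automorphisms (``the needed automorphisms are induced by the Mordell--Weil group of a genus one fibration''). Concretely: if $\Aut(X)$ is infinite of zero entropy but $\Pic(X)$ is not Borcherds, then by Theorem~\ref{thm:equivalence.characterizations.Borcherds}(c) there are at least two cusps of $\mathcal{D}_{\Pic(X)}$ with infinite stabilizer, hence two genus one fibrations on $X$ with infinite Mordell--Weil group, and translations along these generate a subgroup of $\Aut(X)$ containing a hyperbolic element---a contradiction.

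This difference matters because your treatment of the supersingular exceptions in characteristic $2$ and $3$ is circular. The rank bound $\rk(L)\le 18$ for Borcherds lattices other than $\mathrm{II}_{1,25}$ tells you that supersingular Picard lattices are never Borcherds, but in the forward direction you must rule out the possibility that some supersingular $X$ has zero entropy and infinite $\Aut(X)$, and this does \emph{not} follow from the rank bound without already assuming the equivalence you are proving. Your alternative phrasing (``every supersingular K3 surface has positive entropy'') is the correct statement to invoke, but it needs independent justification: either cite \cite{yu.salem} and \cite{brandhorst.22}, or run the Mordell--Weil argument above, which applies verbatim since a rank-$22$ lattice is never Borcherds and (having rank $\geq 6$ and infinite symmetry group) always has at least one, hence at least two, cusps with infinite stabilizer. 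The paper's Torelli-free approach has the advantage of working uniformly across all characteristics without this case distinction.
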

\begin{proof}
The homomorphism $\varphi \colon \Aut(X) \to \Or^+(\Pic(X))/W(\Pic(X))$ has finite kernel in every characteristic. Therefore $X$ has zero entropy and infinite automorphism group if and only if $\Pic(X)$ is a Borcherds lattice. 
This does not rely on the Torelli theorem because the needed automorphisms are induced by the Mordell-Weil group of a genus one fibration.  
\end{proof}

We refer the interested reader to the ancillary file for the complete list of $193$ lattices. In Appendix B we include the Picard lattices of K3 surfaces of zero entropy and rank $\ge 11$.



\begin{corollary} \label{cor:examples.positive.entropy}
The following K3 surfaces have positive entropy, and in particular their automorphism group is not virtually solvable:
\begin{enumerate}
    \item Kummer surfaces in characteristic $0$ or $p>2$;
    \item K3 surfaces covering an Enriques surface, unless $\Pic(X)\cong U\oplus E_8\oplus D_8$;
    \item Singular and supersingular K3 surfaces.
    \item K3 surfaces with a genus one fibration with Mordell-Weil rank at least $9$.
\end{enumerate}
\end{corollary}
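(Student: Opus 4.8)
The plan is to read everything off Theorem~\ref{thm:main.theorem.K3}: since a K3 surface with finite automorphism group has zero entropy for trivial reasons, $X$ has \emph{positive} entropy precisely when $\Aut(X)$ is infinite and $\Pic(X)$ is not a Borcherds lattice. Moreover $\rho(X)\ge 5$ in each of the four families, so once $\Pic(X)$ is known to lie outside the (finite) union of the $118$ hyperbolic lattices with finite symmetry group classified by Nikulin and Vinberg and the $193$ Borcherds K3 lattices of Theorem~\ref{thm:classification.Borcherds.lattices}, Theorem~\ref{thm:virtually.abelian} gives that $\Aut(\mathcal{D}_{\Pic(X)})$ is not virtually solvable, yielding the last clause. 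So for each family it suffices to (i) express membership in the family as a condition on $\Pic(X)$, (ii) verify against the two explicit lists that this condition forces $\Pic(X)$ out of their union — with the one documented exception $U\oplus E_8\oplus D_8$ — and (iii) in positive characteristic, realize a positive-entropy isometry by an automorphism.

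The three ``large Picard rank'' families are nearly immediate. Singular K3 surfaces have $\rho(X)=20$, supersingular ones $\rho(X)=22$, and Kummer surfaces $\rho(X)=16+\rho(A)\in\{17,18,19,20\}$; since the classification of Borcherds lattices contains no lattice of rank $19$, $20$ or $22$ (the only one of rank $\ge 19$ being $\mathrm{II}_{1,25}$, of rank $26$), every K3 surface of Picard rank $\ge 19$ has non-Borcherds Picard lattice. Infinitude of the automorphism group holds because a hyperbolic lattice of rank $\ge 20$ is never in Nikulin--Vinberg's list, and because Kummer surfaces have infinite automorphism group by Keum and Kondo; for supersingular surfaces in characteristics $2$ and $3$ one still has to exhibit a positive-entropy automorphism, the finiteness of the cokernel of $\Aut(X)\to\Or^+(\Pic(X))/W(\Pic(X))$ not being known there in general. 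For Kummer surfaces of Picard rank $17$ or $18$ we invoke Nikulin's criterion (valid also in characteristic $p>2$) that $X$ is a Kummer surface exactly when the rank-$16$ Kummer lattice $\mathrm{K}$ embeds primitively into $\Pic(X)$, and check directly that no rank-$17$ or rank-$18$ member of the $193$-element list admits such an embedding.

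For K3 surfaces covering an Enriques surface the relevant condition is that $\Pic(X)$ carry an Enriques involution — a purely lattice-theoretic property (a primitive copy of $U(2)\oplus E_8(2)$ together with the appropriate condition on its orthogonal complement, following Nikulin and Ohashi; in characteristic $2$ one argues separately via the Bombieri--Mumford classification of Enriques surfaces). Testing this against the $118+193$ lattices, it holds for exactly one of them, namely $U\oplus E_8\oplus D_8$ (whose geometry is spelled out in Remark~\ref{rk:U.E8.D8}); any other K3 surface covering an Enriques surface therefore has $\Pic(X)$ outside both lists, hence infinite automorphism group and non-Borcherds Picard lattice, hence positive entropy.

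Finally, suppose $X$ has a genus one fibration $|E|$ whose Jacobian has Mordell--Weil rank $\ge 9$. The fiber class is a cusp $\vect{e}$ of $\mathcal{D}_{\Pic(X)}$, and by Proposition~\ref{prop:generalization.Shioda.Tate} its stabilizer contains $\ZZ^m$ with $m=\rho(X)-2-\rk(R)\ge 9$, where $R$ is the root part of $\vect{e}^\perp/\langle\vect{e}\rangle$; in particular $\rho(X)\ge 11$ and $\vect{e}$ has infinite stabilizer. If $X$ had zero entropy, Theorem~\ref{thm:equivalent.characterizations.K3}(b) would force $\vect{e}$ to be the unique cusp with infinite stabilizer, i.e.\ the distinguished cusp of the Borcherds lattice $\Pic(X)$; but by Proposition~\ref{prop:Gram.matrix.Borcherds.lattices} that cusp satisfies $\vect{e}^\perp/\langle\vect{e}\rangle\cong W$ for a Leech type lattice $W$ of rank $\rho(X)-2\le 18$, and a direct inspection of the classification in Theorem~\ref{thm:leech.classification} (equivalently of the rank-$\ge 11$ list in Appendix~B) gives $\rk(W)-\rk(W_{root})\le 8$ for all such $W$, a contradiction. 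Hence $X$ has positive entropy. The main obstacles are step~(i) — the precise lattice-theoretic characterizations, above all for Enriques covers, where both the condition on the orthogonal complement and the characteristic $2$ case are delicate — and the realization of positive-entropy isometries on supersingular K3 surfaces in characteristics $2$ and $3$.
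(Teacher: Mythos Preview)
Your overall strategy---show that $\Pic(X)$ avoids both the Nikulin--Vinberg list and the Borcherds list---is exactly the paper's, and for (3) and (4) your arguments coincide with the paper's. For (1) and (2) you reach the same conclusion via different lattice-theoretic criteria. For Kummer surfaces the paper works on the transcendental side: in characteristic $0$ it uses that $X$ is Kummer iff $\mathrm{T}(X)$ embeds primitively into $U(2)^3$, and in characteristic $p>2$ it lifts non-supersingular $X$ to characteristic $0$ with the full Picard group (the supersingular case falling under (3)); your Kummer-lattice approach is equally valid since only the trivial direction of Nikulin's criterion is needed, though your range $\rho(X)\in\{17,18,19,20\}$ omits supersingular Kummer surfaces in positive characteristic. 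For Enriques covers you over-engineer: the paper checks only the \emph{necessary} condition that $U(2)\oplus E_8(2)$ embed primitively in $\Pic(X)$, which holds in every characteristic, and this single check already isolates $U\oplus E_8\oplus D_8$---so the condition on the orthogonal complement and the characteristic-$2$ subtleties you flag as obstacles are simply not needed. Your final worry about realizing positive-entropy isometries on supersingular surfaces in characteristics $2$ and $3$ is addressed (tersely) in the proof of Theorem~\ref{thm:main.theorem.K3}: when $\Pic(X)$ is neither Borcherds nor $2$-reflective and has rank $\geq 6$, there are at least two cusps with infinite stabilizer, each realized by Mordell--Weil translations of a genus one fibration on $X$, and composing these produces a positive-entropy automorphism without any appeal to Torelli.
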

\begin{proof}
 (1) In characteristic $0$ a K3 surface $X$ is Kummer if and only if its transcendental lattice $\mathrm{T}(X)$ embeds primitively into the lattice $U(2)^3$ \cite[Theorem~14.3.17]{huybrechts.K3}. It is straightforward to check that none of the K3 surfaces of zero entropy are Kummer by computing their transcendental lattices. If instead $\mathrm{char}(k)=p>2$, either $X$ is supersingular, or it is liftable to a K3 surface $X_0$ in characteristic $0$ together with the full Picard group \cite[Corollary~4.2]{lieblich.maulik}.
 In the first case $X$ has positive entropy, because there are no $2$-reflective or Borcherds lattices of rank $22$. In the second $\Pic(X_0)= \Pic(X)$ contains $16$ orthogonal simple $(-2)$-roots, hence $X_0$ is Kummer as well and we conclude by the result in characteristic $0$.
 
(2) In any characteristic, if a K3 surface $X$ covers an Enriques surface, then there exists a primitive embedding $U(2)\oplus E_8(2)\hookrightarrow \Pic(X)$. If $X$ has zero entropy then it has either finite automorphism group or $\Pic(X)$ is a Borcherds lattice. It is straightforward, using the lists of $2$-reflective and Borcherds lattices, to check that $U(2)\oplus E_8(2)$ embeds primitively into such a lattice if and only if $\Pic(X)\cong U \oplus E_8\oplus D_8$.
    
   (3) All hyperbolic lattices of rank $20$ and $22$ have an infinite symmetry group by Nikulin's classification, and there are no Borcherds lattices of rank $20$ or $22$.
   
    (4) Assume by contradiction that $X$ has zero entropy. Then $X$ has an infinite virtually abelian automorphism group. By assumption $\rho(X)\ge 11$, so $\Pic(X)=U\oplus W$ is a Borcherds lattice by Theorem \ref{thm:virtually.abelian}. By assumption $\Aut(X)$ contains an abelian subgroup of rank at least $9$. 
    But by Table \ref{tab:Borcherds.lattices.11} it is at most 8, a contradiction.
\end{proof}

It was already proved by Oguiso in \cite[Theorem~1.6]{oguiso.entropy} that singular K3 surfaces over $\CC$ have positive entropy. The same was shown for supersingular K3 surfaces in \cite{yu.salem} and \cite{brandhorst.22}.
Corollary \ref{cor:examples.positive.entropy} can be used in practice to determine whether a K3 surface with large Picard rank admits an automorphism of positive entropy, without knowing explicitly the full Picard lattice.

\begin{remark} \label{rk:U.E8.D8}
Let us explain the geometry of complex K3 surfaces $X$ with Picard lattice isometric to $U\oplus E_8\oplus D_8$. Similar results could be proved over algebraically closed fields of arbitrary characteristic.

There exists a unique elliptic fibration $|E|$ on $X$ with Mordell-Weil group of positive rank, which admits a unique reducible fiber of type $\mathrm{I}_{16}$. In particular the Mordell-Weil group of $|E|$ has rank $1$, and by Theorem \ref{thm:equivalence.characterizations.Borcherds} it follows that $\Aut(X)\cong \ZZ$ up to a finite group. It was already proved by Nikulin \cite[{\S}6]{nikulin.preserves} that such K3 surfaces have zero entropy, using the following observation. Since $\Pic(X)$ is $2$-elementary, $X$ admits a non-symplectic involution $\sigma$, and we can study its fixed locus. It follows from \cite[Equation~(5)]{nikulin.preserves} that the fixed locus contains a curve $C$ of genus $1$, and since the whole automorphism group $\Aut(X)$ commutes with $\sigma$, the whole $\Aut(X)$ must fix the class of $C$ in $\Pic(X)$. In particular $X$ has zero entropy, and the fixed curve $C$ is a fiber in the unique elliptic fibration $|E|$ with positive Mordell-Weil rank.

We can also explicitly describe which Enriques surfaces are covered by $X$.
One can show that $X$ covers a unique Enriques surface $S$ up to isomorphism. More precisely $S$ is a general member of the $2$-dimensional family studied by Barth and Peters (see for instance \cite[Lemma~4.13]{barth.peters.enriques}). Barth and Peters studied the Enriques surfaces in this family as examples of Enriques surfaces with an infinite, but virtually abelian automorphism group. In fact it turns out that the automorphism group of $S$ has a subgroup of finite index isomorphic to $\ZZ$ (cf. \cite[Theorem~4.12]{barth.peters.enriques}).

The Enriques surfaces in the Barth-Peters family can be characterized by the fact that their dual graph of $(-2)$-curves contains the following graph: 

$$
    \begin{tikzpicture}[scale=0.5]
\node (R1) at (180:2) [nodal] {};
\node (R2) at (135:2) [nodal] {};
\node (R3) at (90:2) [nodal] {};
\node (R4) at (45:2) [nodal] {};
\node (R5) at (0:2) [nodal] {};
\node (R6) at (315:2) [nodal] {};
\node (R7) at (270:2) [nodal] {};
\node (R8) at (225:2) [nodal] {};
\node (R9) at (intersection of R2--R7 and R3--R8) [nodal] {};
\node (R10) at (intersection of R4--R7 and R3--R6) [nodal] {};

\draw (R1)--(R2)--(R3)--(R4)--(R5)--(R6)--(R7)--(R8)--(R1)--(R9) (R5)--(R10);
    \end{tikzpicture}
$$

Note that the half-fiber of type $\mathrm{I}_8$ on $S$ pulls back to the $\mathrm{I}_{16}$ fiber on $X$. Moreover the automorphism group of $S$ preserves the half-fiber of type $\mathrm{I}_8$ \cite[Lemma~4.10]{barth.peters.enriques}.

Finally, let us observe that the Enriques surfaces in the Barth-Peters family are special from several points of view: not only they are one of the few families of Enriques surfaces admitting a numerically trivial automorphism \cite{mukai.numerically.trivial}, but they also are the only Enriques surfaces in characteristic $\ne 2$ admitting a non-extendable $3$-sequence \cite[Theorem~1.3]{martin.mezzedimi.veniani.3}.
\end{remark}

\section{Appendix A: Borcherds' method} \label{sec:borcherds.method}
We review Borcherds' method, which is an algorithm that computes the symmetry group of an arbitrary hyperbolic lattice $S$ embedding into $\mathrm{II}_{1,25}$, up to a finite group. This is enough to decide whether the lattice $S$ has zero entropy. For details and proofs we refer to \cite{shimada.borcherds}.

\subsection{Conway chambers}
Recall that $\Lambda$ denotes the Leech lattice. We set $L = U \oplus \Lambda$ and call any fundamental domain for the Weyl group of $L$ a \emph{Conway chamber} and denote it by $C$. For instance $\mathcal{D}_L$ is a Conway chamber. Note that $C$ is a locally polyhedral convex cone. 

\subsection{Weyl vectors}
For a lattice $N$, $\Delta_N = \{\vect{r} \in N : \vect{r}^2 = -2\}$ denotes the set of $(-2)$-roots. A $(-2)$-root $\vect{r}$ defines a half space $H_{\vect{r}}=\{\vect{x} \in \mathcal{P}_N  : \vect{x}.\vect{r}\geq 0\}$.
We call $\vect{w} \in L$ a \emph{Weyl vector} (of the Conway chamber $C$) if the set of simple $(-2)$-roots of $L$ (with respect to $C$) coincides with $\Delta(C) = \{\vect{x} \in \Delta_L : \vect{w}.\vect{x} = 1\}$.
Recall that the simple $(-2)$-roots are in bijection with the facets of $C$.

Conway \cite[Ch.~27,~{\S}2,~Theorem~1]{conway.sloane.lattices} proved that every Conway chamber has a unique Weyl vector. More precisely, $\vect{w}$ is a Weyl vector of $L$ \mbox{ if and only if } $\vect{w}^2 = 0$ and $\vect{w}^\perp / \langle \vect{w} \rangle \cong \Lambda$. 
He also showed that the group of symmetries $\Aut(C)$ is isomorphic to the affine group of the Leech lattice. In particular $\Aut(C)$ is virtually abelian of rank $24$.

\subsection{Induced Conway Chambers}
Borcherds' method uses our detailed knowledge of $L$ to 
compute a finite index subgroup of $\Aut(\mathcal{D}_S)$, where $S \subseteq L$ is any primitive sublattice such that $R = S^\perp \subseteq L$ cannot be embedded in the Leech lattice. 

This condition is true if for instance $R$ contains at least a $(-2)$-root. 
In this case $C_S \coloneqq  C\cap S$ lies in a face of $C$.
Since $C$ is locally polyhedral, the chamber $C_S$ is actually a \emph{finite} polyhedral cone. It may happen that $\dim C_S < \rk S$; in this case we call the chamber $C$ and its Weyl vector \emph{$S$-degenerate}. By a suitable choice of $C$, we can always ensure that $C_S$ contains an open subset of $\mathcal{D}_S$. 

Since $\Delta_S \subseteq \Delta_L$, we have that $C_S \subseteq \mathcal{D}_S$ for a unique fundamental chamber $\mathcal{D}_S$ of $S$.
Furthermore, we know that the Conway chambers tile the positive cone of $L$. 
Since we can see the positive cone of $S$ as a slice of the positive cone of $L$, the tessellation of $\mathcal{P}_L$ by Conway chambers $C$ 
induces a tessellation of $\mathcal{P}_L \cap S_\RR = \mathcal{P}_S$ by induced Conway chambers $C_S$. The dual picture is as follows:
let $\pi: L_\RR \to S_\RR$ be the orthogonal projection. 
Set $\Delta_{L|S} = \pi(\Delta_L) \setminus \{0\} \subseteq S \otimes \QQ$. Then the tessellation by induced chambers has walls defined by $\Delta_{L|S}$ and $\Delta_S \subseteq \Delta_{L|S}$. There are two types of walls: the elements of $\Delta_S$ are called \emph{outer walls} and the elements of $\Delta_{L|S} \setminus \Delta_{S}$ \emph{inner walls}. 

\subsection{Adjacent Chambers}
We call two induced chambers $\gamma_1$ and $\gamma_2$ \emph{adjacent}, if they share a facet. This facet is cut out by a wall $\vect{v} \in \Delta_{L|S}$. Suppose $\gamma_1 \subseteq \mathcal{D}_S$.
If $\vect{v}$ is an inner wall, then $\gamma_2 \subseteq \mathcal{D}_S$ as well, while if $\vect{v}$ is an outer wall, then $\gamma_2$ is not contained in $\mathcal{D}_S$, but rather in the mirrored Weyl chamber $s_{\vect{v}}(\mathcal{D}_L)$.

\subsection{The chamber graph}
Define an infinite graph $\Gamma$ with vertices given by the set $\mathcal{C}_S$ of induced Conway chambers. Two chambers $\gamma_1$ and $\gamma_2$ are joined by an edge if and only if they are adjacent by a wall. Recall that the set of edges emanating from a given vertex is finite, since $\gamma$ is a finite polyhedral cone.

An isometry $f \in \Or^+(S)$ preserves the Weyl chambers of $S$, but it may not preserve the tessellation of the Weyl chambers by induced Conway chambers. 
A solution is to pass to the finite index subgroup $G \subseteq \Or^+(S)$ consisting of those elements of $\Or^+(S)$
that extend to an isometry of $L$. Clearly the isometries of $L$ preserving $S$ map induced Conway chambers to induced Conway chambers. Therefore the group $G$ acts on $\Gamma$, and it is known that $\Gamma/G$ is finite. 
We call two chambers in $\Gamma$ \emph{$G$-congruent} if they lie in the same $G$-orbit.
We set $\Hom_G(\gamma_1,\gamma_2)=\{g \in G : g(\gamma_1) = \gamma_2\}$.

\subsection{Borcherds' method - Shimada's algorithm}
To work with $\Gamma/G$, we rely on algorithms computing the following:
\begin{enumerate}
    \item given $\gamma \in \Gamma$, return the finite list of $\gamma'\in \Gamma$ sharing an edge with $\gamma$;
    \item given two vertices $\gamma_1,\gamma_2$, compute the finite set $\Hom_G(\gamma_1,\gamma_2)$.
\end{enumerate}
Note that (2) allows to decide whether or not $\gamma_1$ and $\gamma_2$ are $G$-congruent.
Then $\Gamma/G$, as well as generators for $G$,
can be computed by a standard algorithm in geometric group theory. 
At the heart is the computation of a spanning tree in the finite graph $\Gamma/G$. 
We obtain a new generator $g$ for the group $G$ whenever we encounter an ``unexplored'' chamber $\gamma_1$ which is $G$-congruent to an already ``explored'' chamber $\gamma_2$
or an unexplored chamber with $\Hom_G(\gamma,\gamma)=:\Aut_G(\gamma)$ trivial.

Note that given an edge, i.e. a wall, it is easy to decide if it is an inner or outer wall.
Therefore we can work in the subgraph 
$\Gamma(\mathcal{D}_S) =\{ \gamma \in \Gamma : \gamma \subseteq \mathcal{D}_S\}$ and use the group $\Aut_G(\mathcal{D}_S) = G \cap \Aut(\mathcal{D}_S)$ in place of $G$. 
Note that $\Gamma/G \cong  \Gamma(\mathcal{D}_S)/\Aut_G(\mathcal{D}_S)$.

The input of Shimada's algorithm consists of the triple $(L,S,\vect{w})$, where $\vect{w}$ is a suitable Weyl vector of $L$.
The output consists of generators for $\Aut_G(\mathcal{D}_S)$, as well as a list of Conway chambers in $\Gamma(\mathcal{D}_S)$ constituting a complete set of representatives of $\Gamma/G$.
Along the way it also computes a set of representatives of the simple $(-2)$-roots, i.e. outer walls $\Delta(\mathcal{D}_S) / \Aut_G(\mathcal{D}_S)$.

Note that $\Aut_G(\mathcal{D}_S)$ is of finite index in $\Aut(\mathcal{D}_S)$. Therefore this suffices for our purpose of determining whether $S$ has zero entropy or not.

\subsection{Complexity}
The complexity of this algorithm can be estimated roughly as follows:
let $\vect{v_1},\dots, \vect{v_n}$ be the vertices of $\Gamma$ that we have already explored.
Then, for each new vertex $\vect{v} \in \Gamma$ one has to check whether there is an $i \in I$ and a $g\in G$ with $g(\vect{v_i})=\vect{v}$.
This leads to a worst case of $n$ checks for each new vector and leads to a time complexity of roughly $c n(n+1)/2$, where $c$ is the time needed to compute $\Hom_G(\gamma, \gamma')$ for $\gamma, \gamma' \in \Gamma$. 

In the largest example that we computed, $n$ is of magnitude $5\cdot 10^6$, leading to a time complexity of $10^{13}$, which is by far too big for a practical algorithm.
In what follows we report on our improvements to Shimada's algorithm.

The complexity can be decreased to (very roughly) $2c n$ if one finds invariants of the vertices separating the $G$-orbits; then one has to perform at most a single check per new vertex $\gamma$ and compute $\Aut_G(\gamma)$. 
Finding invariants separating the $G$-orbits is too much to ask for, but any invariant separating ``most'' $G$-orbits leads to a drastic speedup. The fingerprint is one such invariant. 

\subsection{The fingerprint of a chamber}
Let $\gamma \in \Gamma$ be an induced Conway chamber. A facet of $\gamma$ corresponds to a ray $F = \RR_{\geq 0} \vect{v}$ of its dual cone, where $\vect{v} \in \Delta_{L|S}$. Then $F \cap S^\vee = \NN_0 \vect{v'}$; we call $\vect{v'}$ a \emph{primitive facet generator} of $\gamma$. 

Let $(\vect{v_1}, \dots, \vect{v_n}) $ be the primitive facet generators of the chamber $\gamma$. 
Set $\vect{a}=\sum_{i=1}^n\vect{v_i}$ and $a_\gamma = \vect{a}^2$. For $i \in \{1,\dots n\}$, set $b_i =\vect{a}^2$ and
$c_i = (\vect{v_i}.\vect{a} : i \in \{1,\dots n\})$. 
Let $b_\gamma$ (resp. $c_\gamma$) 
be the list of $b_i$ (resp. $c_i$) with entries sorted in ascending order. 
The \emph{fingerprint} of the induced Conway chamber $\gamma$ is the triple $f(\gamma) = (a_\gamma, b_\gamma, c_\gamma)$. 
By construction we have the following:

\begin{proposition}  
If $\gamma$ and $\gamma'$ are $G$-congruent, then they have the same fingerprint. 
\end{proposition}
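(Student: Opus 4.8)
The plan is to use that a $G$-congruence is in particular an isometry of $S$ preserving $S^\vee$ and mapping induced Conway chambers to induced Conway chambers, and then to check that each ingredient of the fingerprint is an isometry invariant of the (unordered) list of primitive facet generators.

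First I would note that any $g \in G \subseteq \Or^+(S)$ extends to an isometry of $S\otimes\QQ$, and hence restricts to an isometry of the dual lattice $S^\vee$; in particular $g(S^\vee)=S^\vee$ and $g$ preserves primitivity of vectors of $S^\vee$. Suppose $g(\gamma)=\gamma'$. Since $g$ preserves the bilinear form, it maps the dual cone of $\gamma$ onto the dual cone of $\gamma'$, so it sends the extremal rays of the former — which by construction correspond to the facets of $\gamma$, cf. the description $F = \RR_{\geq 0}\vect{v}$ with $\vect{v}\in\Delta_{L|S}$ — bijectively onto the extremal rays of the latter. Taking, for each such ray, the primitive generator of its intersection with $S^\vee$ therefore commutes with $g$. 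Consequently, if $(\vect{v_1},\dots,\vect{v_n})$ is the list of primitive facet generators of $\gamma$, then $(g(\vect{v_1}),\dots,g(\vect{v_n}))$ is the list of primitive facet generators of $\gamma'$, up to reordering.

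It then remains to observe that $\vect{a}=\sum_{i=1}^n \vect{v_i}$ is carried to $\sum_{i=1}^n g(\vect{v_i})=g(\vect{a})$, and that the quantities $a_\gamma$, $b_i$, $c_i$ are computed from the $\vect{v_i}$ and $\vect{a}$ using only the bilinear form. Since $g$ is an isometry, $a_{\gamma'}=g(\vect{a})^2=\vect{a}^2=a_\gamma$, and the multisets $\{b_i\}$ and $\{c_i\}$ are each unchanged; sorting then yields $b_{\gamma'}=b_\gamma$ and $c_{\gamma'}=c_\gamma$, so $f(\gamma')=f(\gamma)$. I do not expect a genuine obstacle here; the only point worth stating with care is that $g$ maps primitive facet generators to primitive facet generators, which is precisely where $g(S^\vee)=S^\vee$ and the compatibility of $g$ with the dual-cone description of facets are used.
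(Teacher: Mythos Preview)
Your argument is correct and is precisely the unpacking of what the paper means when it writes ``By construction we have the following'' before the proposition: the paper gives no further proof, treating the statement as immediate from the definition of the fingerprint. Your care in spelling out that $g\in G\subseteq \Or^+(S)$ preserves $S^\vee$, sends the dual cone of $\gamma$ to that of $\gamma'$, and hence permutes primitive facet generators, is exactly the content behind that phrase.
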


The reader may notice that the definition of the fingerprint does not involve $G$; it is more of an invariant for $\Or^+(S)$ than $G$.
If the index $[\Or^+(S):G]$ is large, it can be worth refining the fingerprint by using the $G$-orbits on the discriminant group $S^\vee/S$.
In general the fingerprint is not enough to separate all $G$-orbits, but in practice it separates most of them. 

\subsection{Checking G-congruence.}
Given the primitive facet generators $\Delta_1$ and $\Delta_2$ of the induced Conway chambers $\gamma_1$ and $\gamma_2$, we can compute the set 
$\Hom_G(\gamma_1,\gamma_2)$ as follows.
Notice that $\Hom_G(\gamma_1, \gamma_2) =\{g \in \Or^+(S) : g(\Delta_1) = \Delta_2, g\in G\}$. Since $\gamma_i \subseteq S \otimes \RR$ has full dimension, we can choose a basis $\vect{b_1}, \dots, \vect{b_\rho} \in \Delta_1$ of $S\otimes \QQ$. 
If $g\in \Hom_G(\gamma_1,\gamma_2)$, then we know that $g(\vect{b_i}) \in \Delta_2$, and since $\Delta_2$ is finite, this shows that $\Hom_G(\gamma_1,\gamma_2)$ is finite.
Conversely, in order to obtain an element of $\Hom_G(\gamma_1, \gamma_2)$, we choose $\rho$ elements $\vect{v_1},\dots, \vect{v}_\rho \in \Delta_2$ and define $g \in \GL(S \otimes \QQ)$ by $g(\vect{b_i})=\vect{v_i}$.  
Then one checks if $g \in \Or(S)$, $g(\Delta_1) = \Delta_2$ and finally if $g \in G$.
Shimada proceeds by brute force and enumerates $\Delta_2^\rho$ to filter out $\Hom_G(\gamma_1,\gamma_2)$. This works well if $(\#\Delta_2)^\rho$ is small. 

For a more efficient approach, we rely on the ideas presented in \cite{plesken.souvignier}. Originally their algorithm computes isometries between two positive definite lattices $W_1$ and $W_2$. 
It can be modified to instead compute  $\Hom_G(\gamma_1,\gamma_2)$. The idea is to replace the finite set of short (enough) vectors of $W_i$ with the finite set $\Delta_i$ of primitive facet generators. 
Anything else is straightforward and left to the reader. 

\subsection{Computing the facets}
From the Weyl vector $\vect{w}$ of a Conway chamber $C$, Shimada computes the finite set  $\pi(\Delta(C)) \subseteq \Delta_{L|S}$ by enumerating solutions to an inhomogeneous quadratic equation $\underline{x}^T Q \underline{x} + 2 \underline{b}^T \underline{x} + c \leq 0$, where $Q \in \ZZ^{\rho \times \rho}$ is a positive definite matrix and $\underline{b} \in \ZZ^\rho$. 
For this enumeration Shimada refers to his Algorithm 3.1 on ``positive quadratic triples'' in \cite{shimada.char5}.
We remark that completing the square makes this equivalent to a close vector enumeration. The close vector problem is NP hard and well studied, and a fast algorithm for close and short vector enumeration is given for instance in \cite{fincke.pohst}. 
Finally, we would like to mention that it is even possible to adapt Shimada's Algorithm 5.8 in such a way as to just rely on a suitable short vector enumeration which leads to a further speedup.

The set $\pi(\Delta(C))$ thus computed is finite, and the induced chamber is given by $C_S= \{\vect{x} \in \mathcal{P}_S : \vect{x}.\vect{r} \geq 0 \  \forall r \in \pi(\Delta(C))\}$. Note that $\pi(\Delta(C))$ does not necessarily correspond to the set of walls of $C_S$, since some of the corresponding inequalities may be redundant. It is a standard task in algorithmic convex geometry to get rid of the redundancies. The algorithms can be based on linear programming for instance.
This gives the facets of $C_S$ and hence the edges of the graph $\Gamma$ adjacent to $\gamma = C_S$, as well as the primitive facet generators.

\begin{remark}
In higher dimension, getting rid of the redundancies is the bottleneck of the algorithm. To reduce the number of redundancy computations one can work with $\pi(\Delta(C))$ instead of the primitive facet generators because it is compatible with $G$-congruence. See \cite[Remark 6.8]{shimada.borcherds}. This is possible for the fingerprint and for checking $G$-congruence. Then the computation of the facets is only necessary for determining the edges of the graph.
\end{remark}

\subsection{Computing the first Weyl vector}
Given $S$ of rank $\rho$, one can compute a representative $R$ in the genus with discriminant form given by $-q|_{A_S}$ and signature $(0, 26-\rho)$.
Then $L$ is constructed as a primitive extension of $S \oplus R$ using an anti-isometry of the discriminant forms of $S$ and $R$.
The lattice $L$ thus obtained is even, unimodular and of signature $(1,25)$ hence it is \emph{abstractly} isomorphic to $U \oplus \Lambda$. 

To find a first Weyl vector, Shimada seems to rely on a random search of isotropic vectors in $L$. 
Here we give an algorithm using the $23$ holy constructions of the Leech lattice. 
At the heart is an algorithm which constructs an explicit isometry $L \cong U\oplus \Lambda$. Since the lattices involved are indefinite, this is hard in general.

First of all Simon's indefinite LLL-algorithm \cite{simon.lll} gives us a hyperbolic plane $U \subseteq L$. Then
we have $L = U \oplus N$ for some even negative definite unimodular lattice $N$.
If $N$ is the Leech lattice, we are done. Otherwise $N$ is one of the $23$ Niemeier lattices, corresponding to the $23$ deep holes of the Leech lattice. 
From this correspondence one infers $23$ constructions of the Leech lattice, one from each Niemeier lattice. For the details we refer to \cite[Theorem 4.4]{ebeling.lattices} and 
\cite[Chapter 24]{conway.sloane.lattices}. 

The outcome is a copy of $\Lambda$ in $N \otimes \QQ$ with 
\[N/(N \cap \Lambda) \cong \Lambda / (N \cap \Lambda) \cong \ZZ/ h \ZZ,\] 
where $h$ is the (common) Coxeter number of (the irreducible root sublattices) of $N$. In fact $\Lambda$ is constructed from a certain $[\vect{v}]\in N/hN$ as follows: set 
$$K_{\vect{v}} = \{\vect{x} \in N : \vect{x}.\vect{v} \equiv 0 \mod h\} \mbox{ and } \Lambda\coloneqq K_{\vect{v}}+(1/h)\vect{v}$$
for a representative $\vect{v}$ of $[\vect{v}]$ with $\vect{v}^2$ divisible by $2h^2$. Note that $\Lambda \cap N = K_{\vect{v}}$. 

We can use this $\vect{v}$ and the hyperbolic plane $U$ to construct an explicit isometry $U \oplus N \cong U \oplus \Lambda$ as follows.

\begin{theorem}
Choose a basis $\vect{e},\vect{f} \in U$ with $\vect{e}^2 = \vect{f}^2=0$ and $\vect{e}.\vect{f}=1$, and define $\vect{w} = -\vect{v}^2/(2h)\vect{f}+h \vect{e} + \vect{v}$. 
Then $\vect{w}$ is a Weyl vector, i.e. $\vect{w}^\perp/\langle \vect{w} \rangle \cong \Lambda$.
\end{theorem}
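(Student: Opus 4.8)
The plan is to invoke Conway's characterization recalled above: $\vect{w}$ is a Weyl vector of $L=U\oplus N$ (which is abstractly $U\oplus\Lambda$) if and only if $\vect{w}^2=0$ and $\vect{w}^\perp/\langle\vect{w}\rangle\cong\Lambda$. Write $m=\vect{v}^2/(2h)$ and $k=\vect{v}^2/(2h^2)$, so that $k\in\ZZ$ by hypothesis, $m=hk$, and $\vect{w}=h\vect{e}-m\vect{f}+\vect{v}$. Since $\vect{e},\vect{f}$ are isotropic, orthogonal to $N$, and satisfy $\vect{e}.\vect{f}=1$, expanding gives $\vect{w}^2=2h(-m)(\vect{e}.\vect{f})+\vect{v}^2=-2hm+2hm=0$. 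As $\vect{v}$ may be taken to be a primitive vector of $N$ (as it is in the holy constructions), $\vect{w}$ is primitive, so $M\coloneqq\vect{w}^\perp/\langle\vect{w}\rangle$ is a genuine (even, unimodular, negative definite, rank $24$) lattice.

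Next I would exhibit an explicit isometry $M\cong\Lambda$. The key observation is that if $\vect{r}=a\vect{e}+b\vect{f}+\vect{x}\in L$ with $\vect{x}\in N$ lies in $\vect{w}^\perp$, then $0=\vect{r}.\vect{w}=-am+bh+\vect{x}.\vect{v}$ forces $\vect{x}.\vect{v}=am-bh=h(ak-b)\equiv 0\pmod h$, i.e.\ $\vect{x}\in K_{\vect{v}}$ — precisely the condition appearing in $\Lambda=K_{\vect{v}}+\ZZ\tfrac1h\vect{v}$. I therefore define
\[
\phi\colon\ \vect{w}^\perp\cap L\ \longrightarrow\ N\otimes\QQ,\qquad \phi(a\vect{e}+b\vect{f}+\vect{x})=\vect{x}-\tfrac{a}{h}\vect{v}.
\]
Using $\vect{x}.\vect{v}=am-bh$ and $\vect{v}^2=2hm$ one checks $\phi(\vect{r})^2=\vect{x}^2-\tfrac{2a}{h}(am-bh)+\tfrac{a^2}{h^2}(2hm)=\vect{x}^2+2ab=\vect{r}^2$, so $\phi$ is an isometry onto its image; moreover $\phi(\vect{w})=\vect{v}-\vect{v}=0$, and if $\phi(\vect{r})=0$ then $\vect{x}=\tfrac{a}{h}\vect{v}$ forces $b=-ak$ and hence $\vect{r}=\tfrac{a}{h}\vect{w}\in L$, so $h\mid a$ by primitivity of $\vect{w}$ and $\vect{r}\in\langle\vect{w}\rangle$. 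Thus $\ker\phi=\langle\vect{w}\rangle$ and $\phi$ descends to an injective isometric embedding $\bar\phi\colon M\hookrightarrow N\otimes\QQ$.

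Finally I would identify the image with $\Lambda$. For $\bar\phi(M)\subseteq\Lambda$: by the observation above $\vect{x}\in K_{\vect{v}}$, so $\phi(\vect{r})=\vect{x}+(-a)\tfrac1h\vect{v}\in K_{\vect{v}}+\ZZ\tfrac1h\vect{v}=\Lambda$. Conversely, given $\vect{x}_0+\tfrac{j}{h}\vect{v}\in\Lambda$ with $\vect{x}_0\in K_{\vect{v}}$, $j\in\ZZ$, write $\vect{x}_0.\vect{v}=h\ell$ and set $\vect{r}=-j\vect{e}-(jk+\ell)\vect{f}+\vect{x}_0$; then $\vect{r}\in L$ satisfies $\vect{r}.\vect{w}=0$ and $\phi(\vect{r})=\vect{x}_0+\tfrac{j}{h}\vect{v}$, so $\bar\phi$ is surjective onto $\Lambda$. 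Hence $\bar\phi\colon M\xrightarrow{\ \sim\ }\Lambda$, and Conway's theorem gives that $\vect{w}$ is a Weyl vector. The only delicate points are the bookkeeping in the isometry identity and the remark that $\vect{v}$ (hence $\vect{w}$) may be taken primitive; everything else follows once one notices that orthogonality to $\vect{w}$ is exactly what confines the $N$-component of a lattice vector to $K_{\vect{v}}$.
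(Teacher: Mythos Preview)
Your proof is correct. The paper itself gives no argument beyond citing \cite[\S 2.1]{brandhorst.elkies} and remarking that the proof there (for prime $h$) adapts to general $h$; your explicit isometry $\bar\phi$, built on the key observation that $\vect{r}.\vect{w}=0$ forces the $N$-component $\vect{x}$ into $K_{\vect{v}}$, is exactly the expected direct verification. The one place where you wave hands slightly is primitivity of $\vect{w}$: you appeal to primitivity of $\vect{v}$ ``as in the holy constructions,'' which is not literally among the paper's stated hypotheses. This is harmless, though: the paper records $[N:K_{\vect{v}}]=h$, which forces $\vect{v}.N + h\ZZ = \ZZ$, i.e.\ the divisibility of $\vect{v}$ in the unimodular lattice $N$ is coprime to $h$; since the $\vect{e}$-coefficient of $\vect{w}$ is $h$ and its $N$-component is $\vect{v}$, primitivity of $\vect{w}$ follows without assuming $\vect{v}$ itself primitive.
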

\begin{proof}
The proof in \cite[\S 2.1]{brandhorst.elkies} can be adapted to non-prime numbers.
\end{proof}

\subsection{A non-degenerate Weyl vector}
Recall that we need the first Weyl vector $\vect{w}$, with associated chamber $C$, to be $S$-nondegenerate. 
It is $S$-degenerate if $C_S \coloneqq  C \cap \mathcal{P}_S$ is not of the same dimension as $S$. If in the previous step we obtain an $S$-degenerate Weyl vector, we proceed as follows.
Let $N\coloneqq  C_S^\perp \subseteq L$ and $R = S^\perp \subseteq L$.
Choose a (random) fundamental vector $\vect{a} \in \mathcal{P}_S\setminus \bigcup_{\vect{r} \in \Delta_N\setminus \Delta_R} \vect{r}^\perp$, preferably close to $C_S$, and let $\Delta(\vect{w},\vect{a})\coloneqq \{\vect{r} \in \Delta_N \setminus \Delta_R : \vect{r}.\vect{w}>0 , \vect{r}.\vect{a}<0\}$ be the set of relevant roots. 
We sort $\{\vect{r_1},\dots, \vect{r_N}\}=\Delta(\vect{w},\vect{a})$ in a way so that 
\[i<j \implies \frac{\vect{u}.\vect{r_i}}{ \vect{a}.\vect{r_i}} < \frac{\vect{u}.\vect{r_j} }{ \vect{a}.\vect{r_j}},\]
where $\vect{u}$ is a general enough element of $C$. We set $l_i = s_{\vect{r_i}}$ and observe that $l_N \circ \ldots \circ l_1 (\vect{w})$ is a non-$S$-degenerate Weyl vector.

\clearpage

\section{Appendix B: Tables}

\small

\begin{table}[H]
\centering
\begin{tabular}{ c | c } 
  Rank & Lattice\\
  \hline
  \hline
\multirow{1}{1.5em}{$17$}
&$E_8^2\oplus A_1$\\
\hline
\multirow{1}{1.5em}{$16$}
&$E_8^2$\\
\hline
\multirow{1}{1.5em}{$15$}
&$E_8\oplus E_7$\\
\hline
\multirow{1}{1.5em}{$14$} 
&$E_8\oplus D_6$ \\
  \hline
\multirow{1}{1.5em}{$13$} 
&$E_8\oplus D_4\oplus A_1$ \\
  \hline
\multirow{3}{1.5em}{$12$}
&$E_8\oplus D_4$ \\
&$E_8\oplus A_1^4$ \\
&$D_8\oplus D_4$\\
  \hline
  \multirow{3}{1.5em}{$11$}
&$E_8\oplus A_3$ \\
&$E_8\oplus A_1^3$\\
&$E_7\oplus A_1^4$\\
  \hline
\multirow{4}{1.5em}{$10$}
&$E_8\oplus A_1^2$\\
&$E_8\oplus A_2$\\
&$E_7\oplus A_1^3$\\
&$D_6\oplus A_1^4$ \\
  \hline
\multirow{4}{1.5em}{$9$} 
&$E_7\oplus A_1^2$\\
&$E_8\oplus A_1$\\
&$D_6\oplus A_1^3$\\
&$D_4\oplus A_1^5$\\
\end{tabular}
\quad
\begin{tabular}{ c | c } 
  Rank & Lattice\\
  \hline
  \hline
\multirow{9}{1.5em}{$8$} 
&$D_8$\\
&$E_8$\\
&$E_7\oplus A_1$\\
&$E_6\oplus A_2$\\
&$D_6\oplus A_1^2$\\
&$D_4^2$\\
&$D_4\oplus A_1^4$\\
&$A_1^8$\\
&$O(A_1^8,2)$\\
  \hline
\multirow{9}{1.5em}{$7$} 
&$A_7$\\
&$D_7$\\
&$E_7$\\
&$D_6\oplus A_1$\\
&$E_6\oplus A_1$\\
&$D_5\oplus A_2$\\
&$D_4\oplus A_3$\\
&$D_4\oplus A_1^3$\\
&$A_1^7$\\
\end{tabular}
\quad
\begin{tabular}{ c | c } 
  Rank & Lattice\\
  \hline
  \hline
\multirow{11}{1.5em}{$6$} 
&$A_6$\\
&$D_6$\\
&$E_6$\\
&$A_5\oplus A_1$\\
&$D_5\oplus A_1$\\
&$A_4\oplus A_2$\\
&$D_4\oplus A_2$\\
&$D_4\oplus A_1^2$\\
&$A_3^2$\\
&$A_2^3$\\
&$A_1^6$\\
\hline
\multirow{8}{1.5em}{$5$}
&$A_5$\\
&$D_5$\\
&$A_4\oplus A_1$\\
&$D_4\oplus A_1$\\
&$A_3\oplus A_2$\\
&$A_3\oplus A_1^2$\\
&$A_2^2\oplus A_1$\\
&$A_1^5$\\
\end{tabular}
\begin{tabular}{ c | c } 
  Rank & Lattice\\
  \hline
  \hline
\multirow{6}{1.5em}{$4$}
&$A_4$\\
&$D_4$\\
&$A_3\oplus A_1$\\
&$A_2^2$\\
&$A_2\oplus A_1^2$\\
&$A_1^4$\\
\hline
\multirow{3}{1.5em}{$3$}
&$A_3$\\
&$A_2\oplus A_1$\\
&$A_1^3$\\
\hline
\multirow{2}{1.5em}{$2$}
&$A_2$\\
&$A_1^2$\\
\hline
\multirow{1}{1.5em}{$1$}
&$A_1$\\
\end{tabular}
\caption{List of $2$-reflective root overlattices. The notation $O=O(R,n)$ indicates that $O$ is a certain overlattice of $R$ of index $n$.}
\label{tab:2.reflective.root.overlattices}
\end{table}

\begin{table}[H]
\centering
\begin{tabular}{ c | c } 
  Rank & Lattice\\
  \hline
  \hline
\multirow{1}{1.5em}{$24$}
&$E_8^3$\\
\hline
\multirow{2}{1.5em}{$16$} 
&$E_8\oplus D_8$ \\
&$E_8\oplus E_7\oplus A_1$ \\
  \hline
\multirow{2}{1.5em}{$15$} &$E_7\oplus D_8$ \\
&$E_8\oplus D_7$ \\
  \hline
\multirow{2}{1.5em}{$14$} &$D_8\oplus D_6$ \\
&$E_8\oplus E_6$ \\
  \hline
  \multirow{3}{1.5em}{$13$} &$D_{10}\oplus A_1^3$ \\
&$E_7\oplus E_6$ \\
&$E_8\oplus D_5$ \\
  \hline
\multirow{5}{1.5em}{$12$}
&$D_8\oplus A_1^4$ \\
&$D_4^3$ \\
&$E_6^2$ \\
&$D_{11}\oplus A_1$ \\
&$E_8\oplus A_4$ \\
  \hline
\multirow{3}{1.5em}{$11$}
&$D_4^2\oplus A_1^3$ \\
&$E_8\oplus A_2\oplus A_1$ \\
&$D_7\oplus D_4$ \\
\end{tabular}
\quad
\begin{tabular}{ c | c | c } 
  Rank & Lattice &$\rho^2$\\
  \hline
  \hline
\multirow{5}{1.5em}{$10$}
&$D_4\oplus A_1^6$ &$5/2$\\
&$D_8\oplus A_2$ &$2$\\
&$E_6\oplus A_2^2$ &$2$\\
&$D_9\oplus A_1$ &$5/2$\\
&$D_7\oplus A_3$ &$2$\\
  \hline
\multirow{5}{1.5em}{$9$} &$A_1^{9}$ &$5/2$\\
&$E_7\oplus A_2$ &$5/2$\\
&$E_6\oplus A_2\oplus A_1$ &$5/2$\\
&$D_9$ &$2$\\
&$D_7\oplus A_2$ &$2$\\
  \hline
\multirow{7}{1.5em}{$8$} 
&$E_6\oplus A_1^2$ &$5/2$\\
&$A_2^4$ &$2$\\
&$D_7\oplus A_1$ &$5/2$\\
&$D_5\oplus A_3$ &$11/4$\\
&$A_4^2$ &$2$\\
&$A_7\oplus A_1$ &$5/2$\\
&$A_8$ &$2$\\
\end{tabular}
\quad
\begin{tabular}{ c | c | c } 
  Rank & Lattice &$\rho^2$\\
  \hline
  \hline
\multirow{7}{1.5em}{$7$}
&$A_2^3\oplus A_1$ &$5/2$\\
&$D_5\oplus A_1^2$ &$5/2$\\
&$A_4\oplus A_2\oplus A_1$ &$5/2$\\
&$A_4\oplus A_3$ &$11/4$\\
&$A_5\oplus A_1^2$ &$5/2$\\
&$A_5\oplus A_2$ &$17/6$\\
&$A_6\oplus A_1$ &$5/2$\\
\hline
\multirow{4}{1.5em}{$6$}
&$A_2^2\oplus A_1^2$ &$5/2$\\
&$A_3\oplus A_1^3$ &$5/2$\\
&$A_3\oplus A_2\oplus A_1$ &$11/4$\\
&$A_4\oplus A_1^2$ &$5/2$\\
\hline
\multirow{1}{1.5em}{$5$}
&$A_2\oplus A_1^3$ &$5/2$\\
\end{tabular}
\caption{Genus representatives of root overlattices of Leech type. The column $\rho^2$ indicates the square of the covering radius of the unique non-root overlattice in the genus.}
\label{tab:root.overlattices.Leech.type}
\end{table}

\begin{table}
\centering
\begin{tabular}{ c | c | c} 
  Rank &Lattice $L$ &$\rk(\Aut(\mathcal{D}_L))$ \\
  \hline
  \hline
  \multirow{1}{1.5em}{$26$} 
&$U\oplus E_8^3$ &$24$\\
  \hline
\multirow{2}{1.5em}{$18$} 
&$U\oplus E_8\oplus D_8$ &$1$ \\
&$U\oplus E_8\oplus E_7\oplus A_1$ &$1$ \\
  \hline
\multirow{2}{1.5em}{$17$}
&$U\oplus E_7\oplus D_8$ &$2$ \\
&$U\oplus E_8\oplus D_7$ &$1$\\
  \hline
\multirow{2}{1.5em}{$16$}
&$U\oplus D_8\oplus D_6$ &$3$ \\
&$U\oplus E_8\oplus E_6$ &$1$ \\
  \hline
  \multirow{3}{1.5em}{$15$} 
  &$U\oplus D_{10}\oplus A_1^3$ &$4$ \\
&$U\oplus E_7\oplus E_6$ &$1$ \\
&$U\oplus E_8\oplus D_5$ &$1$ \\
\hline
\multirow{5}{1.5em}{$14$}
&$U\oplus D_4^3$ &$6$ \\
&$U\oplus D_8\oplus A_1^4$ &$5$ \\
&$U\oplus E_6^2$ &$2$ \\
&$U\oplus D_{11}\oplus A_1$ &$1$ \\
&$U\oplus E_8\oplus A_4$ &$1$ \\
\end{tabular}
\quad
\begin{tabular}{ c | c | c } 
  Rank &Lattice $L$ &$\rk(\Aut(\mathcal{D}_L))$ \\
  \hline
  \hline
\multirow{3}{1.5em}{$13$}
&$U\oplus D_4^2\oplus A_1^3$ &$6$\\
&$U\oplus E_8\oplus A_2\oplus A_1$ &$1$\\
&$U\oplus D_7\oplus D_4$ &$1$\\
\hline
\multirow{5}{1.5em}{$12$}
&$U\oplus D_4\oplus A_1^6$ &$7$ \\
&$U\oplus E_6\oplus A_2^2$ &$3$\\
&$U\oplus D_7\oplus A_3$ &$2$ \\
&$U\oplus D_8\oplus A_2$ &$1$ \\
&$U\oplus D_9\oplus A_1$ &$1$\\
  \hline
\multirow{6}{1.5em}{$11$} 
&$U\oplus A_1^{9}$ &$8$\\
&$U\oplus E_6\oplus A_2\oplus A_1$ &$2$\\
&$U\oplus D_7\oplus A_2$ &$2$\\
&$U\oplus E_7\oplus A_2$ &$1$ \\
&$U\oplus D_9$ &$1$ \\
&$U\oplus W_9$ &$1$\\
\end{tabular}
\quad

\vspace{10pt}
\begin{tabular}{ c | c } 
  Rank & \# Borcherds\\ & lattices \\
  \hline
$10$ &$13$\\
$9$ &$15$\\
$8$ &$19$\\
$7$ &$21$\\

\end{tabular}
\quad 
\begin{tabular}{ c | c } 
  Rank & \# Borcherds\\ & lattices \\
  \hline
$6$ &$28$\\
$5$ &$27$\\
$4$ &$24$\\
$3$ &$18$\\

\end{tabular}
\caption{Borcherds lattices of rank $ \ge 11$. The last column indicates the rank of the abelian subgroup of finite index in $\Aut(\mathcal{D}_L)$. For the complete list of Borcherds lattices, we refer to the ancillary file.}
\label{tab:Borcherds.lattices.11}
\end{table}


\normalsize

\printbibliography
\end{document}